\definecolor{rouge}{rgb}{0.7,0.00,0.00}
\definecolor{vert}{rgb}{0.00,0.5,0.00}
\definecolor{bleu}{rgb}{0.00,0.00,0.8}
\newtheorem{theorem}{Theorem}[section]
\newtheorem*{theorem*}{Theorem}
\newtheorem{lemma}[theorem]{Lemma}
\newtheorem{proposition}[theorem]{Proposition}
\newtheorem{condition}{Condition}
\newtheorem{conditionA}{A\kern-0.1mm}
\newtheorem{conditionB}{B\kern-0.1mm}
\theoremstyle{definition}
\newtheorem{example}[theorem]{Example}
\newtheorem{remark}[theorem]{Remark}
\def \eref#1{\hbox{(\ref{#1})}}
\numberwithin{equation}{section}
\def\geq{\geqslant}
\def\leq{\leqslant}
\def\RR{\mathbb{R}}
\def\EE{\mathbb{E}}
\def \eref#1{\hbox{(\ref{#1})}}
\def\EE{\mathbb{ E}}
\begin{document}

\title[Averaging principles for time-inhomogeneous multi-scale SDEs]
{Averaging principles for time-inhomogeneous multi-scale SDEs via nonautonomous
Poisson equations}

\author{Xiaobin Sun\quad }
\curraddr[Sun, X.]{School of Mathematics and Statistics/RIMS, Jiangsu Normal University, Xuzhou, 221116, P.R. China}
\email{xbsun@jsnu.edu.cn}
	
\author{\quad Jian Wang\quad }
\curraddr[Wang, J.]{School of Mathematics and Statistics \& Key Laboratory of Analytical Mathematics and Applications (Ministry
of Education) \& Fujian Provincial Key Laboratory of Statistics and Artificial Intelligence, Fujian Normal University, 350117, Fuzhou, P.R. China}
\email{Jianwang@fjnu.edu.cn}
	
\author{\quad Yingchao Xie}
\curraddr[Xie, Y.]{School of Mathematics and Statistics/RIMS, Jiangsu Normal University, Xuzhou, 221116, P.R. China}
\email{ycxie@jsnu.edu.cn}

\begin{abstract}
The purpose of this paper is to establish asymptotic behaviors of time-inhomogeneous multi-scale stochastic differential equations (SDEs). To achieve them, we analyze the evolution system of measures for time-inhomogeneous Markov semigroups, and investigate regular properties of nonautonomous Poisson equations. The strong and the weak averaging principle for time-inhomogeneous multi-scale SDEs, as well as explicit convergence rates, are provided. Specifically, we show the slow component in the multi-scale stochastic system converges strongly or weakly to the solution of an averaged equation, whose coefficients retain the dependence of the scaling parameter. When the coefficients of the fast component exhibit additional asymptotic or time-periodic behaviors, we prove the slow component converges strongly or weakly to the solution of an averaged equation, whose coefficients are independent of the scaling parameter. Finally, two examples are given to indicate the effectiveness of all the averaged equations mentioned above.
\end{abstract}

\date{\today}
\subjclass[2000]{Primary 34D08, 34D25; Secondary 60H20}
\keywords{time-inhomogeneous multi-scale stochastic differential equation; nonautonomous Poisson equation; evolution system of measures; averaging principle; periodic coefficients}

\maketitle

\section{Introduction}

\subsection{Background and motivations}\label{section1.1}

Multi-scale phenomena is very nature in numerous systems, and multi-scale systems are widely used in various fields including nonlinear oscillations, chemical kinetics, biology and climate dynamics, e.g.\ see \cite{EE2003}. For example, the macroscopic scale of fluid meters or millimeters, which are accurately described by density, velocity and temperature fields, follow a continuous Navier-Stokes equation; while the microscopic scale of nanometers obey molecular dynamics in Newton's law, which give the actual position and velocity of each atom that makes up the fluid. Therefore, different physical laws are needed to describe the system with different scales.

The averaging principle is used to describe asymptotic behaviors of multi-scale models, and has been extensively studied for a long time. Since Bogoliubov and Mitropolsky \cite{BM1961} established the averaging principle for ordinary differential equations, Khasminskii \cite{K1968} extended it to stochastic differential equations (SDEs), which opens up a new chapter for the study of the averaging principle for multi-scale SDEs. Here the multi-scale SDEs usually can be written as follows:
$$\left\{\begin{array}{l}
\displaystyle
dX^{\varepsilon}_t=b(X^{\varepsilon}_t, Y^{\varepsilon}_t)\,dt+\sigma(X^{\varepsilon}_t, Y^{\varepsilon}_t)\,dW^1_t, \\
dY^{\varepsilon}_t={\varepsilon}^{-1}f(X^{\varepsilon}_t, Y^{\varepsilon}_t)\,dt+\varepsilon^{-1/2} g(X^{\varepsilon}_t, Y^{\varepsilon}_t)\,dW^2_t,
\end{array}\right.
$$
where $W^1:=\{W^1_t\}_{t\ge0}$ and $W^2:=\{W^2_t\}_{t\ge0}$ are two independent standard Brownian motions on a complete probability space $(\Omega, \mathscr{F}, \mathbb{P})$, $\varepsilon>0$ is a small parameter describing the ratio of the time scale between the slow component $X^{\varepsilon}:=\{X_t^\varepsilon\}_{t\ge0}$ and the fast component $Y^{\varepsilon}:=\{Y_t^\varepsilon\}_{t\ge0}$. Note that the coefficients are independent of time in the above system, which is called the time-homogeneous/autonomous multi-scale SDEs. There are numerous literatures devoted to this subject, e.g. see  \cite{BYY2017,B2012,CF2009,CL2023,DSXZ2018,ELV2005,FWL2015,G2018,GKK2006,HL2020,K1968,L2010,LRSX2020,PXW2017,V1991,WR2012,XML2015}.

\vspace{0.1cm}
It is widely recognized that the research on time-homogeneous SDEs has yielded significant theories and valuable results. Nonetheless, the investigation of time-inhomogeneous SDEs has lagged considerably owing to the lack of necessary research tools and methods. Time-inhomogeneous systems will change over time due to external factors or internal variations. For example, in certain complex multi-scale systems, such as the learning models related to neuronal activity (see \cite{GW2012}), the behaviors of the systems can fluctuate over time. Thus, these systems are influenced by parameters, inputs or disturbances that depend on the time variable, and may exhibit quite different behaviors from these for the autonomous systems. Let us take the following simple time-inhomogeneous multi-scale stochastic system on $\mathbb{R}$ for example:
$$\left\{\begin{array}{l}
\displaystyle
dX^{\varepsilon}_t=Y^{\varepsilon}_t\, dt+dW^1_t,\quad X^{\varepsilon}_0=x\in\mathbb{R},  \\
\displaystyle dY^{\varepsilon}_t=-\varepsilon^{-1}\alpha(t/\varepsilon)Y^{\varepsilon}_t\,dt+ [\varepsilon^{-1}\alpha(t/\varepsilon) ]^{1/2}\,dW^2_t,\quad Y^{\varepsilon}_0=y\in\mathbb{R},
\end{array}\right.
$$
where $\alpha(t)=c_0(1+t)^{\beta}$ with $c_0>0$ and $\beta\in (-1,\infty)$.
As claimed in Example \ref{Example 1} in Section 6, we have
$$
\sup_{t\in [0, T]}[\EE|X_{t}^{\varepsilon}-\bar{X}_{t}|^2]^{1/2}\asymp \left\{\begin{array}{l}
\displaystyle \varepsilon^{(1+\beta)/2},\quad\quad\quad \beta\in (-1,1), \\
\displaystyle \varepsilon\left(\log{1/\varepsilon}\right)^{1/2},\quad \beta=1,\\
\displaystyle \varepsilon,\quad\quad\quad \quad\quad\quad\beta\in (1,\infty),
\end{array}\right.
$$
where $\bar{X}_t=x+W^1_t$ for all $t\ge0$. This indicates that the external factor $\alpha(t)$ influences the strong convergence rate, potentially exceeding the optimal strong convergence rate of $\varepsilon^{1/2}$ established in the classical time-homogeneous setting. Consequently, investigating asymptotic behaviors of time-inhomogeneous stochastic systems is quite significant.

In general, mathematical models of time-inhomogeneous multi-scale SDEs can be expressed in the following manner:
\begin{equation*}\left\{\begin{array}{l}
\displaystyle
dX^{\varepsilon}_t=b(t,t/\varepsilon,X^{\varepsilon}_t, Y^{\varepsilon}_t)\,dt+\sigma(t,t/\varepsilon,X^{\varepsilon}_t, Y^{\varepsilon}_t)\,dW^1_t, \vspace{1mm}\\
dY^{\varepsilon}_t=\varepsilon^{-1}f(t,t/\varepsilon,X^{\varepsilon}_t, Y^{\varepsilon}_t)\,dt+\varepsilon^{-1/2}g(t,t/\varepsilon,X^{\varepsilon}_t, Y^{\varepsilon}_t)\,dW^2_t.
\end{array}\right.
\end{equation*}
Note that, by reviewing $\{(t,X^{\varepsilon}_t)\}_{t\ge0}$ as a new process, one can focus on the case where the coefficients only depend on the variables $\{(t/\varepsilon, X^{\varepsilon}_t, Y^{\varepsilon}_t)\}_{t\ge0}$.  A detailed discussion of this direction can be found in \cite{LRSX2020}. On the other hand, if we consider $t/\varepsilon$ as a rapidly varying component within the coefficients $b$ and $\sigma$, then we can apply the classical theory of the averaging principle (see \cite{FW2012, V1991}).
Consequently, we can turn to the following simpler time-inhomogeneous system:
\begin{equation}\left\{\begin{array}{l}
\displaystyle
dX^{\varepsilon}_t=b(X^{\varepsilon}_t, Y^{\varepsilon}_t)\,dt+\sigma(X^{\varepsilon}_t, Y^{\varepsilon}_t)\,dW^1_t,  \\
dY^{\varepsilon}_t=\varepsilon^{-1}f(t/\varepsilon,X^{\varepsilon}_t, Y^{\varepsilon}_t)\,dt+\varepsilon^{-1/2} g(t/\varepsilon,X^{\varepsilon}_t, Y^{\varepsilon}_t)\,dW^2_t.\label{EqI}
\end{array}\right.
\end{equation}

To the best of our knowledge, there are very limited existing works on the time-inhomogeneous stochastic system such like \eref{EqI}. If $f$ and $g$ are time periodic, i.e., there exists  $\tau>0$ such that $f(t+\tau, x, y)=f(t, x, y)$ and $g(t+\tau, x, y)=g(t, x, y)$, Wainrib \cite{W2013} and Uda \cite{U2021} studied the strong averaging principle for the stochastic system \eref{EqI}
when $\sigma\equiv 0$. Later, Cerrai and Lunardi \cite{CL2017} explored the averaging principle for time-inhomogeneous slow-fast stochastic reaction diffusion equations, where the coefficients in the fast equation satisfy the almost periodic condition. However, the convergence rates are not considered in these mentioned references. It is well known that finding optimal/explicit convergence rates is very important in multi-scale numerical problem. For instance, Br\'{e}hier \cite{B2020} mentioned that \emph{the analysis of the full error of the scheme requires as a preliminary step to estimate the error in the averaging principle}.

 \subsection{Main techniques}\label{section1.2}

Undoubtedly, a crucial prerequisite for understanding the averaging principle lies in comprehending the corresponding averaging equation, and the most important thing is how to accurately describe the averaging coefficients that heavily depends on the long time behavior of the solution to the so-called frozen equation. Note that in the present setting the corresponding frozen equation corresponding to the stochastic system \eref{EqI} is the following time-inhomogeneous SDEs:
\begin{equation}
dY^{x,y}_{t}=f(t, x, Y^{x,y}_{t})\,dt+g(t,x,Y^{x,y}_{t})\,dW^2_t,\quad x\in \RR^{n},y\in\RR^{m}.\label{TDFrozenE}
\end{equation}
According to ergodic theory of Markov processes, the unique invariant measure plays a crucial role in analyzing the long-time behavior of time-homogeneous SDEs. However, an invariant measure may not exist if concerning the SDE \eref{TDFrozenE}. A natural generalization of the concept of invariant measure is an evolution system of measures for the time-inhomogeneous SDE \eref{TDFrozenE}, see \cite{DR2006,DR2008}. Recall that,  $\{\mu^x_t\}_{t\in \RR}$ is an evolution system of measures for the time-inhomogeneous semigroup $\{P^x_{s,t}\}_{t\geq s}$, if
$$
\int_{\RR^m}P^x_{s,t} \varphi(y)\,\mu^x_s(dy)=\int_{\RR^m}\varphi(y)\,\mu^x_t(dy),\quad s\leq t,\varphi\in C_b(\RR^m).
$$
From this observation, it makes sense to construct the averaged coefficients by taking the average of the original coefficients $b$ and $\sigma$ with respect to this evolution system of measures $\{\mu^x_t\}_{t\in \RR}$.

In order to obtain optimal convergence rates, we will apply the method that is based on the Poisson equation, which has been confirmed as a powerful technique that has successfully been used to obtain optimal strong or weak convergence rates, the diffusion approximation and the central limit theorem in various stochastic systems; see the pioneer results in \cite{PV2001,PV2003,PV2005}, and  \cite{B2020,CDGOS2022,HLLS2023,PS2008,RSX2021,RX2021,SX2023,SXX2022} and references therein for related topics.

It is noteworthy that all the quoted papers above focus on an autonomous Poisson equation associated with time-homogeneous SDEs. Since the SDEs \eref{TDFrozenE} are time-inhomogeneous, the first novel contribution of this paper is to introduce and study a class of nonautonomous Poisson equations as formulated below:
$$
\partial_s\Phi(s,x,y)+\mathscr{L}^{x}(s)\Phi(s,x,\cdot)(y)=-H(s,x,y),\quad s\in\RR, x\in \RR^{n},y\in\RR^{m},
$$
where
$\mathscr{L}^{x}_s$ is the generator of SDE \eref{TDFrozenE}, that is,
$$
\mathscr{L}^{x}(s)\varphi(y):=\langle f(s,x,y),\nabla\varphi(y)\rangle+\frac{1}{2}\text{Tr} [(gg)^{\ast}(s,x,y)\nabla^2\varphi(y)],\quad \varphi\in C^2(\RR^m).
$$
In order to solve the nonautonomous Poisson equations above, it is imperative that the function $H:\RR\times\RR^n\times\RR^m\rightarrow \RR^n$ adheres to the \emph{centering condition}:
$$
\int_{\RR^m}H(s,x,y)\,\mu^{x}_s(dy)=0,\quad  s\in \RR,x\in\RR^n.
$$
Under additional conditions, the aforementioned nonautonomous Poisson equations admit the solutions of the form:
$$
\Phi(s,x,y):=\int^{\infty}_{s}\EE H(r,x,Y^{s,x,y}_r)\,dr,
$$
where $\{Y^{s,x,y}_t\}_{t\geq s}$ is the strong solution to the SDE \eref{TDFrozenE} with $Y^{s,x,y}_s=y$. To realize our approach, we need regularity properties of $\Phi(s,x,y)$. In particular, we will investigate the first and second-order derivatives of $\Phi(s,x,y)$ with respect to the variables $x$ and $y$ respectively.

 \subsection{Summary of main results}\label{section1.3}

Using the nonautonomous Poisson equations above, we aim to obtain the optimal strong and weak convergence rates. In the following, we roughly state the contribution of our paper.

\vspace{2mm}
\emph{Strong averaging principle}: Suppose additionally that $\sigma(x,y)=\sigma(x)$ for all $x\in \RR^{n}$ and $y\in\RR^{m}$. We demonstrate that
 \begin{equation}
\sup_{t\in [0, T]}\EE |X_{t}^{\varepsilon}-\bar{X}^{\varepsilon}_{t} |^2\leq C_{T,|x|,|y|}\varepsilon^{2}\left[\sup_{0\leq t\leq T}|\Lambda_{\gamma}(t/\varepsilon)|^2  +  \int^{T/\varepsilon}_0     \alpha(s)\Lambda^2(s)\,ds\right], \label{IR1}
\end{equation}
where  ${X}^{\varepsilon}:=\{{X}^{\varepsilon}_t\}_{t\ge0}$ is the slow component in the multi-scale SDE $\{(X^{\varepsilon}_t, Y^{\varepsilon}_t)\}_{t\ge0}$ given by \eqref{EqI}, and $\bar{X}^{\varepsilon}:=\{\bar{X}^{\varepsilon}_t\}_{t\ge0}$ is the solution to the following averaged equation
\begin{equation}
d\bar{X}^{\varepsilon}_{t}=\bar{b}(t/\varepsilon,\bar{X}^{\varepsilon}_t)\,dt+\sigma(\bar{X}^{\varepsilon}_t)\,d W^1_t,\quad\bar{X}^{\varepsilon}_{0}=x \label{I1.3}
\end{equation}
with \begin{equation}\label{e:drift}\bar{b}(s,x)=\int_{\RR^{m}}b(x,y)\,\mu^x_s(dy),\end{equation} and $\Lambda_{\gamma}(s)=\int^{+\infty}_s e^{-\gamma\int^r_s \alpha(v)\,dv}\,dr$ with some $\gamma\in (0,1)$ and $\Lambda(s)=\Lambda_1(s)$. Roughly speaking, the function $\alpha(t)$ above is employed to describe the convergence rate of the time-inhomogeneous semigroup $\{P^x_{s,t}\}_{t\ge s}$ to its evolution system of measures $\{\mu^x_t\}_{t\in \RR}$; that is, for any Lipschitz continuous function $\phi$ on $\RR^m$,
$$
 \Big|P^x_{s,t} \phi(y)-\int_{\RR^m}\phi(z)\,\mu^x_t(dz)\Big |\leq Ce^{-\int^t_s\alpha(r)\,dr}.
$$

\vspace{2mm}
\emph{Weak averaging principle}: It is crucial to note that the assumption of $\sigma(x,y)=\sigma(x)$ is essential for establishing the strong averaging principle, otherwise it may not hold (see a counter-example in \cite[section 4.1]{L2010}). Instead,  in the general case we will analysis the weak averaging principle. As an application of nonautonomous Poisson equations, we establish
\begin{align}
\sup_{0\leq t\leq T}|\EE \varphi(X^{\varepsilon}_t)-\EE \varphi(\bar{X}^{\varepsilon}_t)|\leq C_{\phi, T,|x|,|y|}\varepsilon\sup_{t\in [0,T]}\Lambda(t/\varepsilon), \quad \varphi\in C^4_b(\RR^n),\label{AVWOI}
\end{align}
where $\bar{X}^{\varepsilon}:=\{\bar{X}^{\varepsilon}_t\}_{t\ge0}$ is the solution to the averaged equation
\begin{equation}
d\bar{X}^{\varepsilon}_{t}=\bar{b}(t/\varepsilon,\bar{X}^{\varepsilon}_t)\,dt+\bar\sigma(t/\varepsilon,\bar{X}^{\varepsilon}_t)\,d \bar W_t,\quad\bar{X}^{\varepsilon}_{0}=x \label{I1.6}
\end{equation}
with $\bar{b}(t,x)$ being defined by \eqref{e:drift},
$$\bar{\sigma}(t,x):=\left[\overline{\sigma\sigma^{\ast}}(t,x)\right]^{1/2}:=\left[\int_{\RR^{m}}\left(\sigma\sigma^{\ast}\right)(x,y)\,\mu^x_t(dy)\right]^{1/2},$$ and  $\bar{W}:=\{\bar W_t\}_{t\ge0}$ being a standard $n$-dimensional Brownian motion.

\vspace{2mm}
To the best of our knowledge, the statements \eref{IR1} and \eref{AVWOI} are new.
If $\sup_{t\in [0,T]}|\Lambda_{\gamma}(t/\varepsilon)|^2\leq   C\int^{T/\varepsilon}_0 \alpha(s)\Lambda^2(s)\,ds
$ holds for small enough $\varepsilon>0$, then rates of the strong and weak averaging principle are reduced into $\varepsilon[\int^{1/\varepsilon}_0 \alpha(s)\Lambda^2(s)\,ds]^{1/2}$ and $\varepsilon\sup_{t\in [0,T]}\Lambda(t/\varepsilon)$ respectively, which are optimal (at least) for the strong averaging principle in some specific setting (see Example \ref{Example 1} below). When $\alpha(t)$ is a constant function, it is evident that the strong convergence order is $1/2$ and that the weak convergence order is $1$, which are consistent with the classic result (see e.g. \cite{L2010}).

\vspace{2mm}

It is worth mentioning that the coefficients in \eref{I1.3} and \eref{I1.6} still include the parameter $\varepsilon$. This is natural in the time-inhomogeneous setting, see Example \ref{Example 2} below. Therefore, in order to achieve coefficients of the averaged equation independent of $\varepsilon$, we should additionally assume that the coefficients $f$ and $g$ exhibit convergence and periodicity with respect to time, respectively.

\vspace{2mm}
For $f$ and $g$ being convergence, that is,
\begin{align}
|f(t,x,y)-\bar f(x,y)|+\|g(t,x,y)-\bar g(x,y)\|\leq \phi(t)(1+|x|+|y|),\quad t\ge0, x\in \RR^n, y\in \RR^m\label{C1}
\end{align}
with $\phi(t)\to 0$ as $t\to +\infty$, we need to verify that $|\mu^x_t(\varphi)-\mu^x(\varphi)|\to0$ as $t\to +\infty$ for any Lipschitz continuous function $\varphi$, where $\mu^x$ is the unique invariant measure of some time-homogeneous SDE with drift and diffusion coefficients being $\bar f(x,\cdot)$ and $\bar{g}(x,\cdot)$ respectively. The condition \eref{C1} is very nature for analyzing the long-time behavior of the time-inhomogeneous SDE, for instance, see \cite{BSWX2024, GO2013}. In this context, it guarantees that $$\lim_{t\to +\infty}\bar{b}(t,x)=\int_{\RR^m}b(x,y)\,\mu^x(dy)=:\bar{b}_c(x),$$
and
$$
\lim_{t\to +\infty}\overline{\sigma\sigma^{\ast}}(t,x)=\int_{\RR^{m}}\left(\sigma\sigma^{\ast}\right)(x,y)\,\mu^x(dy)=:\left(\overline{\sigma\sigma^{\ast}}\right)_c(x).
$$
As a result, the new averaged equation that does not depend on $\varepsilon$ can be formulated by using the drift coefficient $\bar{b}_c$ and the diffusion coefficient $(\overline{\sigma\sigma^{\ast}})_c$. See the SDEs \eref{AVE21} and \eref{AVE22} below. Additionally, specific convergence rates relying on $\phi$ are discussed for the strong and the weak convergences.

\vspace{2mm}
For $f$ and $g$ being $\tau$-periodic, the measures $\{\mu^x_t\}_{t \in \mathbb{R}}$ will be $\tau$-periodic too, which indicates that $\{\bar{b}(t,x)\}_{t \in \mathbb{R}}$ and
$\{\overline{\sigma\sigma^{\ast}}(t,x)\}_{t \in \mathbb{R}}$ are also $\tau$-periodic for any $x\in\RR^n$. In this framework, we define the new averaged drift and diffusion coefficients as follows:
 $$\bar{b}_p(x):=\frac{1}{\tau}\int^{\tau}_0 \bar{b}(t,x)\,dt,\quad
\left(\overline{\sigma\sigma^{\ast}}\right)_p(x):=\frac{1}{\tau}\int^{\tau}_0 \overline{\sigma\sigma^{\ast}}(t,x)\,dt.
$$
Similarly, we can formulate the averaged equation with the drift coefficient $\bar{b}_p$ and the diffusion coefficient $(\overline{\sigma\sigma^{\ast}})_p$, see the SDEs \eref{AVE31} and \eref{AVE32} below. Meanwhile, we also give the specific convergence rates in the strong and the weak convergences.

\vspace{2mm}
For simplicity, in our paper we will assume that the coefficients are sufficiently smooth. However, such kind smoothness conditions, which are closely related with regular properties of the evolution system of measures and nonautonomous Poisson equations, can be relaxed under specific frameworks, e.g. see \cite{PV2003} for autonomous Poisson equations. Our focus here is on applying the technique via nonautonomous Poisson equations to establish the averaging principle and explicit convergence rates for time-inhomogeneous multi-scale SDEs. By utilizing the established nonautonomous Poisson equations, we can further explore the central limit theorem and the diffusion approximation for time-inhomogeneous multi-scale SDEs. We believe that there are several interesting issues that merit further investigation.

\ \

The remainder of the paper is structured as follows. In Section 2, we explore regularity properties of solutions to time-inhomogeneous frozen SDEs and nonautonomous Poisson equations. As an application, we study the strong and the weak averaging principle for a class of time-inhomogeneous multi-scale SDEs with explicit convergence rates in Section 3. Sections 4 and 5 focus on the cases involving convergent and time-periodic coefficients, respectively.  We give two concrete one-dimensional examples to illustrate our main results in the last section.

\section{Time-inhomogeneous frozen SDEs and nonautonomous Poisson equations}

This section focuses on a class of time-inhomogeneous frozen SDEs and nonautonomous Poisson equations. In Subsection 2.1 we mainly investigate the differentiability of solutions to a class of time-inhomogeneous frozen SDEs. In Subsection 2.2 we consider the existence and the uniqueness of evolution system of measures associated with this class of time-inhomogeneous frozen SDEs. Subsection 2.3 is devoted to establishing the well-posedness and regularities of nonautonomous Poisson equations corresponding to the time-inhomogeneous SDEs above.

Denote by $|\cdot|$ and  $\langle\cdot, \cdot\rangle$ the Euclidean vector norm and the usual Euclidean inner product, respectively. Let $\|\cdot\|$ be the matrix norm or the operator norm if there is no confusion.
For a function $\varphi(x)$ being vector-valued or matrix-valued and defined on $\RR^n$, or a function $\varphi(x,y)$ defined on $\RR^n\times\RR^m$, the notation $\partial_i\varphi(x)$ denotes the $i$-th order derivative of $\varphi(x)$,
and the notation $\partial^i_x \partial^j_y\phi(x,y)$ denotes the $i$-th and the $j$-th partial derivatives of $\varphi(x,y)$ with respect to $x$ and $y$ respectively. For $l_1,l_2\in\mathbb{N}_{+}$, let $C^{k_1,k_2}(\RR^n\times\RR^m,\RR^{l_1\times l_2})$ be the set of functions $\varphi: \RR^n\times\RR^m\to \RR^{l_1\times l_2}$ such that $\partial^i_x \partial^j_y\varphi(x,y)$ are continuous with respect to  $x$ and $y$ for any $0\le i\le k_1$ and $0\le j\le k_2$, respectively.
Throughout this paper, we use $C$ and $C_{T}$ to represent constants whose values may vary from line to line, and we use $C_{T}$ to emphasize that the constant depends on $T$.

\subsection{Time-inhomogeneous frozen SDEs}
For any fixed $x\in \RR^n$, consider  the following time-inhomogeneous frozen SDE
\begin{equation}
dY_{t}=f(t,x,Y_{t})\,dt+g(t,x,Y_{t})\,d W^2_t,\quad Y_{s}=y\in \RR^{m}, s\in \RR, \label{FrozenE}
\end{equation} where $f(t,\cdot,\cdot)\in C^{2,3}(\RR^n\times\RR^m,\RR^m)$, $g(t,\cdot,\cdot)\in C^{2,3}(\RR^n\times\RR^m,\RR^{m\times d_2})$  and $W^2:=(W^2_t)_{t\in \RR}$ is a $d_2$-dimensional standard Brownian motion.

Throughout this paper, we always suppose that the following assumption holds:
\begin{conditionA}\label{A1} \it
There exists a constant $C>0$ such that for any $t\in\RR$, $x_1,x_2\in \RR^n$ and $y_1,y_2\in \RR^m$,
\begin{equation}\label{A10}\begin{split}
&2\langle y_1-y_2,f(t,x_1,y_1)-f(t,x_2,y_2) \rangle +3\|g (t,x_1,y_1)-g(t,x_2,y_2) \| ^{2}\\
&\le -2\alpha(t)|y_1-y_2|^{2}+C\alpha(t)|x_1-x_2|^2,\end{split}
\end{equation}
and for any $t\in \RR$, $x\in \RR^n$, $y\in \RR^m$, $i=0,1,2$ and $j=0,1,2,3$ with $1\leq i+j\leq 3$,
\begin{equation}\label{A11}\begin{split}
&\|\partial^{i}_x\partial^j_yf(t,x,y)\|+\|\partial^{i}_x\partial^j_yg(t,x,y)\|^2\leq C\alpha(t),\\
&|f(t,x,y)|\leq C\alpha(t)(1+|x|+|y|),\\
&\|g(t,x,y)\|^2\leq C\alpha(t)(1+|x|^2+|y|^2),
\end{split}\end{equation}
where $\alpha: \RR\to (0,\infty)$ satisfies
\begin{equation}
\min\left \{ \int_{-\infty }^{0} \alpha(u)\,du,~ \int_{0}^{+\infty}  \alpha(u)\,du \right \} =\infty, \quad \Lambda_{\gamma}(t):=\int^{+\infty}_{t} e^{-\gamma\int^r_t\alpha(u)\,du}\,dr<\infty,\,\, t\in \RR\label{A12}
\end{equation}
for some $\gamma\in (0,1)$.
\end{conditionA}

\begin{remark} We give the following comments on  Assumption \ref{A1}:
\begin{itemize}
 \item[{\rm(i)}] It is easy to see that  under Assumption \ref{A1} the SDE \eref{FrozenE} admits a unique strong solution, denoted by $Y^{s,x,y}:=(Y^{s,x,y}_t)_{t\ge s}$, which is a time-inhomogeneous Markov process. \eref{A10} together with \eqref{A12} is a dissipative assumption, guaranteeing the existence and the uniqueness of evolution system of measures for the SDE \eqref{FrozenE}, see Subsection 2.2 for the details. In particular, the condition \eqref{A12} yields the long-time stability of the frozen equation \eqref{FrozenE}.

\item[{\rm(ii)}]It follows from  \eref{A10} that for any $x\in\RR^n$, $y,l\in\RR^m$ and $\delta>0$,
$$
2\langle f(x,y+\delta l)-f(x,y), \delta l\rangle+3\|g (t,x,y+\delta l)-g(t,x,y) \| ^{2}\leq -2\alpha(t)\delta^2|l|^2,
$$
which is equivalent to saying that
$$
2\langle \delta^{-1}[f(x,y+\delta l)-f(x,y)],  l\rangle+3\|\delta^{-1}(g (t,x,y+\delta l)-g(t,x,y)) \| ^{2}\leq -2\alpha(t)|l|^2.
$$
Then, if $f(t,x,y)$ and $g(t,x,y)$ are differentiable with respect to $y$, letting $\delta\rightarrow 0$ in the inequality above yields that for any $x\in\RR^n$ and $y,l\in\RR^m$
\begin{equation}\label{ConDP}
2 \langle \partial_yf(t,x,y)\cdot l,l \rangle +3\|\partial_yg(t,x,y)\cdot l\|^{2}\le -2\alpha(t)|l|^{2}.
\end{equation}

\item[{\rm(iii)}] Smooth conditions on the coefficients $f(t,\cdot,\cdot)\in C^{2,3}(\RR^n\times\RR^m,\RR^m)$ and $g(t,\cdot,\cdot)\in C^{2,3}(\RR^n\times\RR^m,\RR^{m\times d_2})$  as well as their uniform estimates of partial derivatives in \eqref{A11} are used to prove twice differentiable of $Y^{s,x,y}_t$ in the mean square sense with respect to $x$ and $y$.

 \item[{\rm(iv)}] The conditions \eqref{A10} and \eqref{A11} together imply that for any $\beta\in (0,1)$ there exists a constant $C_{\beta}>0$ such that for all $t\in \RR$, $x\in \RR^n$ and $y\in \RR^m$,
\begin{equation}
2\langle y,f(t,x,y) \rangle +3\|g(t,x,y)\|^2\leq -2\beta\alpha(t)|y|^2+C_{\beta}\alpha(t)(1+|x|^2).\label{RE2}
\end{equation}
The constants $2$ and $3$ involved  in \eref{RE2} guarantees that  the solution $\{Y^{s,x,y}_t\}_{t\geq s}$ has finite fourth moment, which will be used in our arguments later.
\end{itemize}
\end{remark}
\begin{lemma}
Suppose that Assumption {\rm\ref{A1}} holds. Then there is a constant $C>0$ such that
\begin{itemize}
\item[{\rm (i)}] for $t \ge s$, $x\in\RR^{n}$ and $y\in\RR^{m}$,
\begin{equation}
\EE|Y_{t}^{s,x,y} |^{4} \le e^{-4\gamma\int^{t}_s \alpha(u)\,du}|y|^4+C(1+|x|^4);\label{uniformEY}
\end{equation}
\item[{\rm (ii)}]   for $t \ge s$, $x_1,x_2\in\RR^{n}$ and $y_1,y_2\in\RR^{m}$,
\begin{equation}
\mathbb{E}|Y_t^{s,x_1,y_1}-Y_t^{s,x_2,y_2}|^{4}\le e^{-4\gamma\int^t_s \alpha(u)\,du }|y_1-y_2|^{4}+C|x_1-x_2|^{4}, \label{FR1}
\end{equation}\end{itemize}
where $\gamma\in(0,1)$ is the constant in \eqref{A12}.
\end{lemma}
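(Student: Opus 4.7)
The plan is to apply Itô's formula to $|Y_t^{s,x,y}|^4$ and $|Y_t^{s,x_1,y_1}-Y_t^{s,x_2,y_2}|^4$, invoke the dissipativity built into Assumption~\ref{A1}, use Young's inequality to absorb the cross terms so that the residual dissipation rate matches the factor $4\gamma$ appearing in \eqref{uniformEY} and \eqref{FR1}, and finish with a Grönwall-type integration.

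For (i), since $\nabla F(y)=4|y|^2 y$ and $\nabla^2 F(y)=4|y|^2 I_m+8yy^{\ast}$ when $F(y)=|y|^4$, Itô's formula combined with the elementary bound $|g^{\ast}(t,x,Y_t)Y_t|^2\le \|g(t,x,Y_t)\|^2|Y_t|^2$ yields
$$d|Y_t|^4 \le 2|Y_t|^2\bigl[2\langle Y_t,f(t,x,Y_t)\rangle+3\|g(t,x,Y_t)\|^2\bigr]\,dt + dM_t.$$
Fixing $\beta\in(\gamma,1)$ and invoking \eqref{RE2} bounds the bracket by $-2\beta\alpha(t)|Y_t|^2+C_\beta\alpha(t)(1+|x|^2)$, and Young's inequality $2C_\beta(1+|x|^2)|Y_t|^2\le 4(\beta-\gamma)|Y_t|^4+C(1+|x|^4)$ absorbs the cross term into the dissipative one, leaving
$$d|Y_t|^4 \le \bigl[-4\gamma\alpha(t)|Y_t|^4 + C\alpha(t)(1+|x|^4)\bigr]\,dt + dM_t.$$
A standard localization at $\tau_n:=\inf\{t\geq s:|Y_t|\ge n\}$ removes the martingale under expectation, and variation of constants together with the estimate $\int_s^t \alpha(r)e^{-4\gamma\int_r^t\alpha(u)du}\,dr\le 1/(4\gamma)$ delivers \eqref{uniformEY}.

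For (ii), set $Z_t:=Y_t^{s,x_1,y_1}-Y_t^{s,x_2,y_2}$ and repeat the Itô expansion for $|Z_t|^4$. The key bracket
$$2\langle Z_t,f(t,x_1,Y_t^{s,x_1,y_1})-f(t,x_2,Y_t^{s,x_2,y_2})\rangle + 3\|g(t,x_1,Y_t^{s,x_1,y_1})-g(t,x_2,Y_t^{s,x_2,y_2})\|^2$$
is now bounded directly by the dissipativity \eqref{A10} as $-2\alpha(t)|Z_t|^2+C\alpha(t)|x_1-x_2|^2$. Young's inequality with exponent $4(1-\gamma)$ absorbs the mixed $|x_1-x_2|^2|Z_t|^2$ contribution so that
$$d|Z_t|^4 \le \bigl[-4\gamma\alpha(t)|Z_t|^4 + C\alpha(t)|x_1-x_2|^4\bigr]\,dt + d\tilde M_t,$$
and the identical Grönwall argument then produces \eqref{FR1}.

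The only mildly delicate point is the choice of Young's parameter so as to land exactly on the rate $4\gamma$ in the statement (this is what forces $\gamma<\beta<1$ in the application of \eqref{RE2}); the martingale-under-expectation step is entirely routine via the usual stopping-time localization, justified by the polynomial moment bounds implicit in \eqref{RE2}.
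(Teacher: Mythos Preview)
Your proposal is correct and follows essentially the same route as the paper: apply It\^o's formula to $|Y_t|^4$ (resp.\ $|Z_t|^4$), bound $4|g^*Y|^2+2\|g\|^2|Y|^2\le 6\|g\|^2|Y|^2$, invoke \eqref{RE2} (resp.\ \eqref{A10}) on the resulting bracket $2\langle\cdot,f\rangle+3\|g\|^2$, use Young's inequality to sacrifice the gap between $\beta$ (resp.\ $1$) and $\gamma$ so as to absorb the cross term, and integrate. Your write-up is in fact slightly more explicit than the paper's on the choice of Young's parameter and on the stopping-time localization for the martingale term.
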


\begin{proof}
(i) Recall that
$$
Y_{t}^{s,x,y} =y+\int_{s}^{t} f(r,x, Y_{r }^{s,x,y})\,dr + \int_{s}^{t} g(r,x, Y_{r }^{s,x,y}) \,d W^2_{r},\quad t\ge s.
$$
According to the It\^{o} formula,  for all $t\ge s$,
\begin{align*}
\mathbb{E}|Y_{t}^{s,x,y}|^{4}=&|y|^{4}+4\mathbb{E}\int_{s}^{t} |Y_{r}^{s,x,y} |^{2} \langle Y_{r}^{s,x,y},f(r,x,Y_{r}^{s,x,y}) \rangle\, dr\\	 &+4\mathbb{E}\int_{s}^{t}|g^{*}(r,x,Y_{r}^{s,x,y})\cdot Y_{r}^{s,x,y}|^{2}\,dr+2\mathbb{E}\int_{s}^{t} |Y_{r}^{s,x,y} |^{2}\|g(r,x,Y_{r}^{s,x,y}) \|^{2}\,dr.
\end{align*}
Hence, by \eref{RE2} and Young's inequality,
\begin{align*}
\frac{d}{dt}\mathbb{E} |Y_{t}^{s,x,y} |^{4}
\leq & 4\mathbb{E} [ |Y_{t}^{s,x,y} |^{2}  \langle Y_{t}^{s,x,y},f(t,x,Y_{t}^{s,x,y})  \rangle ]+6\mathbb{E} [ |Y_{t}^{s,x,y} |^2 \|g(t,x,Y_{t}^{s,x,y})  \|^{2} ] \\
\leq & -4\gamma\alpha(t)\mathbb{E} |Y_{t}^{s,x,y} |^{4}+C\alpha(t)(1+|x|^4).
		\end{align*}
This yields
\begin{align*}
\mathbb{E} |Y_{t}^{s,x,y} |^{4}\leq &e^{-4\gamma\int^t_s \alpha(u)\,du}|y|^4+C(1+|x|^4)\int^t_s e^{-4\gamma\int^t_u\alpha(r)\,dr}\alpha(u)\,du\\
\leq &e^{-4\gamma\int^{t}_s \alpha(u)\,du}|y|^4+C(1+|x|^4).
		\end{align*}

(ii) The proof is similar to (i). Note that, for $t \ge s$, $x_i\in\RR^{n}$ and $y_i\in\RR^{m}$ with $i=1, 2$,
		\begin{align*}
			d (Y^{s,x_1,y_1}_t-Y^{s,x_2,y_2}_t )= & [f(t,x_1, Y^{s,x_1,y_1}_t)-f(t,x_2, Y^{s,x_2,y_2}_t) ]\,dt\\
 &+ [g(t,x_1, Y^{s,x_1,y_1}_t)-g(t,x_2, Y^{s,x_2,y_2}_t) ]\,d W^{2}_t
		\end{align*}
		with $Y^{s,x_1,y_1}_s-Y^{s,x_2,y_2}_s=y_1-y_2$.
Using It\^o's formula and taking the expectation on both sides of the equality above, we get
\begin{align*}
& \mathbb{E} |Y_t^{s,x_1,y_1}-Y_t^{s,x_2,y_2}  |^{4} \\
&=   |y_1-y_2|^{4} + 4\mathbb{E} \int_s^t |Y_r^{s,x_1,y_1} - Y_r^{s,x_2,y_2}  |^2 \langle Y^{s,x_1,y_1}_r-Y^{s,x_2,y_2}_r, f(r,x_1, Y^{s,x_1,y_1}_r) - f(r,x_2,Y^{s,x_2,y_2}_r) \rangle \,dr\\
&\quad +4\mathbb{E}\int_{s}^{t}   |(g(r,x_1, Y^{s,x_1,y_1}_r)-g(r,x_2, Y^{s,x_2,y_2}_r))^{\ast} (Y_r^{s,x_1,y_1}-Y_r^{s,x_2,y_2}) |^2 \,dr\\
&\quad +2\mathbb{E}\int_{0}^{t}  |Y_r^{s,x_1,y_1}-Y_r^{s,x_2,y_2}  |^2  \|g(r,x_1, Y^{s,x_1,y_1}_r)-g(r,x_2, Y^{s,x_2,y_2}_r)  \|^2 \,dr.
\end{align*}
Then, by   \eref{A10} and Young's inequality,
\begin{align*}
&\frac{d}{dt}\mathbb{E} |Y_t^{s,x_1,y_1}-Y_t^{s,x_2,y_2} |^{4}\\
&\leq 4\mathbb{E} [ |Y_t^{s,x_1,y_1}-Y_t^{s,x_2,y_2} |^{2} \langle Y^{s,x_1,y_1}_t-Y^{s,x_2,y_2}_t, f(t,x_1, Y^{s,x_1,y_1}_t)-f(t,x_2,Y^{s,x_2,y_2}_t) \rangle ]\\
& \quad+6\mathbb{E} [ |Y_t^{s,x_1,y_1}-Y_t^{s,x_2,y_2} |^{2}  \|g(t,x_1, Y^{s,x_1,y_1}_t)-g(t,x_2, Y^{s,x_2,y_2}_t) \|^2 ]\\
&\leq\mathbb{E} [ |Y_t^{s,x_1,y_1}-Y_t^{s,x_2,y_2} |^{2} (-4\alpha(t) |Y_t^{s,x_1,y_1}-Y_t^{s,x_2,y_2} |^2+C\alpha(t)|x_1-x_2|^2
 ) ]\\		
&\leq -4\gamma\alpha(t)\mathbb{E} |Y_t^{x_1,y_1}-Y_t^{x_2,y_2} |^{4}+C\alpha(t)|x_1-x_2|^{4}.
\end{align*}
Therefore,
\begin{align*}
\mathbb{E} |Y_t^{s,x_1,y_1}-Y_t^{s,x_2,y_2} |^{4}\le& e^{-4\gamma\int^t_s \alpha(u)\,du }|y_1-y_2|^{4}+C\int^t_s e^{-4\gamma\int^t_r \alpha(u)\,du}\alpha(r)\,dr|x_1-x_2|^{4}\\
\le & e^{-4\gamma\int^t_s \alpha(u)\,du }|y_1-y_2|^{4}+C|x_1-x_2|^{4}.
\end{align*}
The proof is complete.
\end{proof}

Let $\Psi: \RR^n\times\RR ^m \to \RR^m$ be a random variable. Its first partial
derivative with respect to $y$ in the mean square sense is defined as a random variable $\partial _y\Psi(x,y)=(\partial _{y_1}\Psi(x,y),\cdots,\partial _{y_m}\Psi(x,y))$, where for all $1\le i\le m$,
$$
\lim_{\delta\rightarrow 0}\EE\left|\frac{\Psi(x,y+\delta e_i)-\Psi(x,y)}{\delta}-\partial _{y_i}\Psi(x,y)\right|^2=0
$$
with $\{e_i\}_{i=1,\ldots,m}$ being an orthogonal basis of $\RR^m$. Similarly, we can define $\partial _x\Psi(x,y)$ as the first partial derivative of $\Psi(x,y)$ with respect to $x$ in the mean square sense (if it exists). Furthermore, we denote $\partial^2_x\Psi(x,y)$ and $\partial^2_y\Psi(x,y)$ as the second partial derivative of $\Psi(x,y)$ in the mean square sense with respect to $x$ and $y$ (if they exist), respectively.

\begin{lemma}\label{DFY}
Suppose that Assumption {\rm\ref{A1}} holds. Let $Y^{s,x,y}:=\{Y_t^{s,x,y}\}_{t\ge s}$ be the unique strong solution to the  SDE \eqref{FrozenE}. Then, for any $t\ge s$, $Y^{s,x,y}_t$ is twice differentiable in the mean square sense with respect to $x$ and $y$, respectively; moreover, for $t\ge s$,
\begin{equation}\label{EPY}\begin{split}
&\EE \|\partial _yY_{t}^{s,x,y}\|^{4}\leq e^{-4\int^t_s\alpha(u)\,du},
\quad \EE \|\partial^2_yY_{t}^{s,x,y}\|^{2}\leq e^{-2\gamma\int^t_s\alpha(u)\,du},\\
&\sup_{t\geq s}\mathbb{E} \|\partial_x Y^{s,x,y}_t\|^{4}\leq C,
\quad\sup_{t\geq s}\mathbb{E} \|\partial^2_x Y^{s,x,y}_t\|^{2}\leq C,
\end{split}\end{equation}
where $\gamma\in(0,1)$ is the constant in \eqref{A12}.
\end{lemma}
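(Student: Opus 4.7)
My plan is to write each of the four quantities as the solution to a formal variational SDE, justify that mean-square difference quotients converge to it, and then control its moments by It\^o's formula, the dissipativity \eqref{ConDP}, and Young's inequalities that preserve enough of the decay rate $\alpha(t)$.

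For $\partial_y Y^{s,x,y}_t$, I would fix $|l|=1$ and set $\eta_t:=\partial_y Y^{s,x,y}_t\cdot l$, which formally solves the linear SDE
\begin{equation*}
d\eta_t=\partial_y f(t,x,Y^{s,x,y}_t)\eta_t\,dt+\partial_y g(t,x,Y^{s,x,y}_t)\eta_t\,dW^2_t,\qquad \eta_s=l,
\end{equation*}
with no inhomogeneous forcing. The bounds in \eqref{A11} give, by a standard Gronwall argument, that $\delta^{-1}[Y^{s,x,y+\delta l}_t-Y^{s,x,y}_t]\to \eta_t$ in $L^2(\Omega)$. Applying It\^o's formula to $|\eta_t|^4$, estimating the quartic cross-variation correction by $|(\partial_y g\cdot\eta)^{\ast}\eta|^2\le|\eta|^2\|\partial_y g\cdot\eta\|^2$, and invoking \eqref{ConDP} gives the differential inequality $\tfrac{d}{dt}\EE|\eta_t|^4\le -4\alpha(t)\EE|\eta_t|^4$, so Gronwall produces $\EE|\eta_t|^4\le e^{-4\int_s^t\alpha(u)\,du}$ and (after supremising over $l$) the first estimate of \eqref{EPY}, with no loss of $\gamma$.

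Next I would treat $K_t:=\partial^2_y Y^{s,x,y}_t(l_1,l_2)$ for $|l_1|=|l_2|=1$. Differentiating the variation SDE a second time identifies $K$ as the strong solution to a linear equation with the same operators $\partial_y f(t,x,Y)$, $\partial_y g(t,x,Y)$ and with inhomogeneities $F_t=\partial^2_y f(t,x,Y)(\eta^1_t,\eta^2_t)$, $G_t=\partial^2_y g(t,x,Y)(\eta^1_t,\eta^2_t)$, where $\eta^i_t=\partial_y Y^{s,x,y}_t l_i$; by \eqref{A11} these satisfy $|F|^2\le C\alpha^2\|\partial_y Y\|^4$ and $\|G\|^2\le C\alpha\|\partial_y Y\|^4$. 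Applying It\^o to $|K_t|^2$, using \eqref{ConDP} to get a $-2\alpha(t)\EE|K_t|^2$ contribution together with a $-2\|\partial_y g\cdot K\|^2$ buffer that absorbs $2\langle \partial_y g\cdot K,G\rangle$, and then performing a Young's inequality with weight $2(1-\gamma)\alpha(t)$ on $2\langle K,F\rangle$ reduces the estimate to
\begin{equation*}
\frac{d}{dt}\EE|K_t|^2\le -2\gamma\alpha(t)\EE|K_t|^2+C\alpha(t)e^{-4\int_s^t\alpha(u)\,du},
\end{equation*}
after substituting the already-established bound on $\EE\|\partial_y Y\|^4$. Because $K_s=0$, Gronwall integrates explicitly to $\EE|K_t|^2\le Ce^{-2\gamma\int_s^t\alpha(u)\,du}$, which is the second estimate of \eqref{EPY}.

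The two $x$-derivatives follow the same It\^o-Gronwall template, the only structural difference being that the variational SDEs for $\partial_x Y$ and $\partial^2_x Y$ now carry inhomogeneous forcings in both drift and diffusion; for $\partial_x Y$ these are $\partial_x f(t,x,Y)$ and $\partial_x g(t,x,Y)$, while for $\partial^2_x Y$ one gets additional second-order mixed terms involving the already-controlled $\partial_x Y$ and $\partial_y Y$. The key observation is that \eqref{A11} bounds $\|\partial_x f\|\le C\alpha(t)$ and $\|\partial_x g\|^2\le C\alpha(t)$, so after the Young step the differential inequality becomes $\tfrac{d}{dt}\EE\|\partial_x Y\|^4\le -c\alpha(t)\EE\|\partial_x Y\|^4+C\alpha(t)$ with $c>0$; together with the zero initial condition $\partial_x Y^{s,x,y}_s=0$ and the elementary estimate $\int_s^t\alpha(r)e^{-c\int_r^t\alpha(u)\,du}\,dr\le 1/c$, Gronwall yields a uniform-in-$t$ bound. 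I expect the main obstacle to be the careful bookkeeping of the Young weights at each step: there is only the slack $\gamma<1$ built into \eqref{A12} to play with, and each cross term absorbed into the dissipation must leave enough of the $-2\alpha(t)$ on the right-hand side for the subsequent Gronwall step still to close with the claimed exponent.
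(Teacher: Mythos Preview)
Your proposal is correct and follows essentially the same approach as the paper: set up the variational SDEs, justify mean-square convergence of the difference quotients, and then apply It\^o's formula together with the dissipativity \eqref{ConDP} and carefully weighted Young inequalities before closing by Gronwall. In fact your bookkeeping for $\partial^2_y Y$---using the extra $-2\|\partial_y g\cdot K\|^2$ buffer coming from the constant $3$ in \eqref{ConDP} to absorb the cross term $2\langle\partial_y g\cdot K,G\rangle$, and then spending the slack $2(1-\gamma)\alpha(t)$ on $2\langle K,F\rangle$---is more explicit than the paper's own treatment, which simply states that the second-derivative bound follows ``following the proof of \eref{EPartialy}'' and that the $x$-derivative statements ``can be proved similarly.''
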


\begin{proof}
We first prove that $Y^{s,x,y}_t$ is first differentiable in the mean square sense with respect to $y$. In fact, it is sufficient to prove that for any $l\in \RR^m$,
\begin{equation}
\lim_{\delta\to 0}\mathbb{E} \left|\frac{Y^{s,x,y+\delta l}_t-Y^{s,x,y}_t}{\delta}-\partial_y Y^{s,x,y}_t\cdot l \right|^2=0,\label{Partialy Y}
\end{equation}
where $\partial_y Y^{s,x,y}_t\cdot l$  satisfies
\begin{equation}\label{Apartial y}
d [\partial _yY_{t}^{s,x,y}\cdot l ]= \partial_yf(t,x,Y_{t}^{s,x,y})(\partial_yY_{t}^{s,x,y}\cdot l)\,dt
 +\partial_yg(t,x,Y_{t}^{s,x,y})(\partial_yY_{t}^{s,x,y}\cdot l)\,d W_{t}^{2}\end{equation}
with $\partial _yY_{s}^{s,x,y}\cdot l=l$.
To do this, denoted by
$$Z^{s,\delta,l}_t:=\frac{Y^{s,x,y+\delta l}_t-Y^{s,x,y}_t}{\delta}-\partial _yY_{t}^{s,x,y}\cdot l.$$
Then $\{Z^{s,\delta,l}_t\}_{t\ge s}$ satisfies the following equation
$$\begin{cases}
\displaystyle
dZ^{s,\delta,l}_t=\delta^{-1} [f(t,x,Y_{t}^{s,x,y+\delta l})-f(t,x,Y_{t}^{s,x,y})-\delta\partial_yf(t,x,Y_{t}^{s,x,y})(\partial_yY_{t}^{s,x,y}\cdot l) ]\,dt\\
\quad\quad\quad\quad+\delta^{-1} [g(t,x,Y_{t}^{x,y+\delta l})-g(t,x,Y_{t}^{s,x,y})-\delta\partial_yg(t,x,Y_{t}^{s,x,y})(\partial_yY_{t}^{s,x,y}\cdot l) ]\,d W_{t}^{2},\\
\,\,Z^{s,\delta,l}_s=0.
\end{cases}
$$
Using It\^{o}'s formula, we have
\begin{align*}
 \frac{d}{dt}\EE|Z^{s,\delta,l}_t|^2
&=2\delta^{-1}\EE \langle f(t,x,Y_{t}^{s,x,y+\delta l}) - f(t,x,Y_{t}^{s,x,y}) - \delta\partial_yf(t,x,Y_{t}^{s,x,y})(\partial_yY_{t}^{s,x,y}\cdot l), Z^{s,\delta,l}_t \rangle  \\
 &\quad +\delta^{-2}\EE [  \|g(t,x,Y_{t}^{s,x,y+\delta l}) - g(t,x,Y_{t}^{s,x,y}) - \delta\partial_yg(t,x,Y_{t}^{s,x,y})(\partial_yY_{t}^{s,x,y}\cdot l) \|^2  ] \\
&=:Q_1(t)+Q_2(t).
\end{align*}

For $Q_1(t)$, by Taylor's formula,  \eref{A11} and \eref{FR1}, one has
\begin{align*}
Q_1(t)
= &2\delta^{-1}\EE\langle  f(t,x,Y_{t}^{s,x,y+\delta l})  -  f(t,x,Y_{t}^{s,x,y})-\partial_yf(t,x,Y_{t}^{s,x,y})\cdot (Y^{s,x,y+\delta l}_t-Y^{s,x,y}_t), Z^{s,\delta,l}_t\rangle\\
 &+2\EE \langle \partial_yf(t,x,Y_{t}^{s,x,y})\cdot Z^{s,\delta,l}_t, Z^{s,\delta,l}_t \rangle\\
\leq &C\alpha(t)\delta^{-1}\EE [|Y_{t}^{s,x,y+\delta l}-Y_{t}^{s,x,y}|^2| Z^{s,\delta,l}_t|]+2\EE \langle \partial_yf(t,x,Y_{t}^{s,x,y})\cdot Z^{s,\delta,l}_t, Z^{s,\delta,l}_t \rangle\\
\leq &(2  -  2\gamma)\alpha(t)\EE| Z^{s,\delta,l}_t|^2+2\EE \langle \partial_yf(t,x,Y_{t}^{s,x,y})\cdot Z^{s,\delta,l}_t, Z^{s,\delta,l}_t\rangle+C\alpha(t) e^{-4\gamma\int^t_s\alpha(u)\,du}\delta^2|l|^4,
\end{align*}
where $\gamma\in(0,1)$ is given in \eref{A12}. The similar argument yields that
\begin{align*}
Q_2(t)
\leq&2\delta^{-2}\EE  \|g(t,x,Y_{t}^{s,x,y+\delta l})  -  g(t,x,Y_{t}^{s,x,y}) - \partial_yg(t,x,Y_{t}^{s,x,y})(Y^{s,x,y+\delta l}_t  -  Y^{s,x,y}_t)  \|^2\\
&+2\EE \|\partial_yg(t,x,Y_{t}^{s,x,y})\cdot Z^{s,\delta,l}_t \|^2\\
\leq&C\alpha(t)\delta^{-2}\EE  |Y_{t}^{s,x,y+\delta l}-Y_{t}^{s,x,y} |^4 + 2\EE \|\partial_yg(t,x,Y_{t}^{s,x,y})\cdot Z^{s,\delta,l}_t  \|^2\\
\leq&C\alpha(t) e^{-4\gamma\int^t_s\alpha(u)\,du}\delta^2|l|^4+2\EE \|\partial_yg(t,x,Y_{t}^{s,x,y})\cdot Z^{s,\delta,l}_t \|^2.
\end{align*}
Combining with the above estimates, we have
\begin{align*}
\frac{d}{dt}\EE |Z^{s,\delta,l}_t |^2\leq &(2-2\gamma){\alpha(t)} \EE |Z^{s,\delta,l}_t |^2+C\alpha(t) e^{-4\gamma\int^t_s\alpha(u)\,du} \delta^2|l|^4\\
&+\EE  [2 \langle \partial_yf(x,Y_{t}^{s,x,y})\cdot Z^{s,\delta,l}_t, Z^{s,\delta,l}_t \rangle+2 \|\partial_yg(x,Y_{t}^{s,x,y})\cdot Z^{s,\delta,l}_t  \|^2 ].
\end{align*}
This along with \eref{ConDP} gives us that
$$
\frac{d}{dt}\EE |Z^{s,\delta,l}_t |^2\leq  -2\gamma\alpha(t)\EE |Z^{s,\delta,l}_t |^2+C\alpha(t) e^{-4\gamma\int^t_s\alpha(u)\,du} \delta^2|l|^4.
$$
Hence,
\begin{align*}
\EE|Z^{s,\delta,l}_t|^2
\leq &C\delta^2|l|^4 \int^t_s \alpha(u)\exp \left(-2\gamma \left(\int^t_u  \alpha(u)\,du - 2\int^u_s \alpha(r)\,dr \right) \right)\,du  \\
\leq  &C\delta^2|l|^4e^{-2\gamma\int^t_s\alpha(u)\,du} \to 0
	\end{align*}
as $\delta \to 0$, which implies that \eref{Partialy Y} holds.

Furthermore, by It\^{o}'s formula,
\begin{align*}
\frac{d}{dt}\mathbb{E} |\partial _yY_{t}^{s,x,y}\cdot l| ^{4}\leq &4\mathbb{E} [  |\partial _yY_{t}^{s,x,y}\cdot l  |^{2}  \langle \partial_yf(t,x,Y_{t}^{s,x,y})(\partial_yY_{t}^{s,x,y}\cdot l), \partial _yY_{t}^{s,x,y}\cdot l \rangle ]\\
&  +6\mathbb{E} [ |\partial _yY_{t}^{s,x,y}\cdot l  |^2 \|\partial_yg(t,x,Y_{t}^{s,x,y})(\partial_yY_{t}^{s,x,y}\cdot l) \|^2].
\end{align*}
This along with \eref{ConDP} gives us
$$
\frac{d}{dt}\mathbb{E}\left|\partial _yY_{t}^{s,x,y}\cdot l\right|^{4}
\leq -4\alpha(t)\mathbb{E}\left|\partial _yY_{t}^{s,x,y}\cdot l\right |^{4}.
$$
Hence,
\begin{align}
\EE\left|\partial _yY_{t}^{s,x,y}\cdot l\right|^{4}\leq e^{-4\int^t_s \alpha(u)\,du}|l|^{4},\label{EPartialy}
\end{align}
which implies the first estimate in \eref{EPY} is satisfied.

According to the same arguments as above, we can see that for any unit vectors $l_1,l_2\in\RR^m$,
\begin{equation*}
\lim_{\delta\to 0}\mathbb{E} \left|\frac{\partial_yY^{s,x,y+\delta l_2}_t\cdot l_1-\partial_yY^{s,x,y}_t\cdot l_1}{\delta}\cdot l_2-\partial^2_y Y^{s,x,y}_t\cdot (l_1,l_2) \right|^2=0,\label{Partialyy Y}
\end{equation*}
where $\partial^2_y Y^{s,x,y}_t\cdot (l_1,l_2)$  satisfies
\begin{align*}
&d[\partial _y^2Y_{t}^{s,x,y}\cdot (l_1,l_2)]\\
&=\partial^2_yf(t,x,Y_{t}^{s,x,y})\left(\partial_yY_{t}^{s,x,y}\cdot l_1, \partial_yY_{t}^{s,x,y}\cdot l_2\right)\,dt
+  \partial_yf(t,x,Y_{t}^{s,x,y})(\partial^2_yY_{t}^{s,x,y}\cdot (l_1, l_2))\,dt\\
&\quad +  \partial^2_yg(t,x,Y_{t}^{s,x,y})\left(\partial_yY_{t}^{s,x,y}\cdot l_1,\partial_yY_{t}^{s,x,y}\cdot l_2\right)\,d W_{t}^{2}
+\partial_yg(t,x,Y_{t}^{s,x,y})(\partial^2_yY_{t}^{s,x,y}\cdot (l_1, l_2))\,d W_{t}^{2}
\end{align*}
with $\partial _y^2Y_{t}^{s,x,y}\cdot (l_1,l_2)=0$. Thus $Y^{s,x,y}_t$ is twice differentiable in the mean square sense with respect to $y$. Furthermore, applying It\^{o}'s formula to $|\partial^2_yY_{t}^{s,x,y}\cdot (l_1,l_2)|^{2}$ and following the proof of \eref{EPartialy}, we can obtain
$$
\EE |\partial^2_yY_{t}^{s,x,y}\cdot (l_1,l_2) |^{2}\leq e^{-2\gamma\int^t_s\alpha(u)\,du},
$$
where  $\gamma$ is the constant in \eqref{A12}. We note that here the Young inequality is also used. Therefore, the second estimate in \eref{EPY} holds.

The twice differentiable property of $Y^{s,x,y}_t$ in the mean square sense with respect to $x$, as well as the third and fourth estimates in \eref{EPY} can be proved similarly, and the details are omitted here. The proof is complete.
\end{proof}

\subsection{Evolution system of measures}

In order to prove the existence and the uniqueness of evolution system of measures for the inhomogeneous SDE \eqref{FrozenE}, we primarily adhere to the corresponding concept in \cite{DR2006}. For this aim, we need the following lemma.

\begin{lemma}\label{L:2.4} Suppose that Assumption {\rm\ref{A1}} holds. Let $Y^{s,x,y}:=\{Y_{t}^{s,x,y}\}_{t\ge s}$ be the unique strong solution to the SDE \eqref{FrozenE}. Then, for any $t\in \RR$ and $x\in \RR^n$, there exists $\eta^x_{t} \in L^{2} (\Omega ,\mathscr{F},\mathbb{P} )$ such that for all $t\ge s$, $x\in \RR^n$ and $y\in \RR^m$,
\begin{equation}\label{F3.2}
\EE|Y_{t}^{s,x,y}-\eta^x_{t}|^{2} \le C (1+|x|^2+|y|^{2} )e^{-2\int_{s}^{t}\alpha(u)\,du},\quad \sup_{t\in\RR}\EE|\eta^x_{t}|^2\leq C(1+|x|^2),
\end{equation}
and $\{\eta^x_{t}\}_{t\in\RR}$ satisfies that
\begin{equation}
\eta^x_{t}=\eta^x_{s}+\int^t_s f(r,x,\eta^x_{r})\,dr+\int^t_s g(r,x,\eta^x_{r})\,d W^2_r,\quad t\ge s.\label{F3.3}
\end{equation}
Moreover, $\eta^x_{t}$ is twice differentiable in mean square with respect to $x$, and the first derivative $\partial _x\eta^x_{t}$ and the second derivative $\partial^2_x\eta^x_{t}$ satisfy that for all $t\ge s$, $x\in \RR^n$ and $y\in \RR^m$
\begin{equation}\label{Epartial_xeta}\begin{split}
& \mathbb{E}\|\partial_x Y^{s,x,y}_t-\partial_x \eta^x_{t}\|^{2} \leq C  (1+|x|^2+|y|^{2}  )e^{-2\gamma\int^t_s\alpha(u)\,du},  \\
& \mathbb{E}\|\partial^2_x Y^{s,x,y}_t-\partial^2_x \eta^x_{t}\|^{2} \leq C  (1+|x|^2+|y|^{2} )e^{-2\gamma\int^t_s\alpha(u)\,du},\\
& \sup_{t\in \RR}\EE\|\partial_x\eta^{x}_{t}\|^2\leq C,\quad \sup_{t\in \RR}\EE\|\partial^2_x\eta^{x}_{t}\|^2\leq C,
\end{split}\end{equation}
where $\gamma\in(0,1)$ is the constant in \eqref{A12}.
\end{lemma}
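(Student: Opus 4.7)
The plan is to realize $\eta^x_t$ as the $L^2(\Omega)$-limit of $Y^{s,x,y}_t$ as $s\to-\infty$. The key estimate will be a squared-norm analog of \eqref{FR1}: rerunning the It\^o computation behind \eqref{FR1} on $|Y^{s,x,y_1}_t-Y^{s,x,y_2}_t|^2$ and exploiting the coefficients $2,3$ in \eqref{A10}, one gets
\[
\EE|Y^{s,x,y_1}_t-Y^{s,x,y_2}_t|^2\le e^{-2\int_s^t\alpha(u)\,du}|y_1-y_2|^2
\]
\emph{without} loss in the exponent. Combining this with the flow identity $Y^{s_1,x,y}_t=Y^{s_2,x,Y^{s_1,x,y}_{s_2}}_t$ for $s_1\le s_2\le t$, and with the second-moment version of \eqref{uniformEY} (obtained from \eqref{RE2} and giving $\sup_{s'\le s_2}\EE|Y^{s',x,y}_{s_2}|^2\le C(1+|x|^2+|y|^2)$), yields the Cauchy bound
\[
\EE|Y^{s_1,x,y}_t-Y^{s_2,x,y}_t|^2\le C(1+|x|^2+|y|^2)\,e^{-2\int_{s_2}^t\alpha(u)\,du},
\]
which vanishes as $s_1,s_2\to-\infty$ by \eqref{A12}. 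Applying the same contraction to pairs with distinct initial $y$'s shows the limit is independent of $y$, so $\eta^x_t$ is well-defined. Passing $s_1\to-\infty$ with $s_2$ renamed $s$ in the Cauchy bound gives the first inequality of \eqref{F3.2}; the uniform moment estimate in \eqref{F3.2} follows by lower semicontinuity of the second moment applied to the above analog of \eqref{uniformEY}.

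\textbf{SDE representation.} For \eqref{F3.3} I pass to the limit $s'\to-\infty$ in $Y^{s',x,y}_t=Y^{s,x,Y^{s',x,y}_s}_t$ with $s'<s\le t$: the left side tends to $\eta^x_t$ in $L^2$, while the squared-norm contraction above, applied with initial data $Y^{s',x,y}_s$ and $\eta^x_s$, gives $Y^{s,x,Y^{s',x,y}_s}_t\to Y^{s,x,\eta^x_s}_t$ in $L^2$. Therefore $\eta^x_t=Y^{s,x,\eta^x_s}_t$ almost surely, which is precisely \eqref{F3.3}.

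\textbf{Differentiability and the main obstacle.} I repeat the Cauchy-limit strategy on the variational process $U^s_t:=\partial_x Y^{s,x,y}_t$, which solves a linear SDE driven by $\partial_x f,\partial_y f,\partial_x g,\partial_y g$ evaluated along $Y^{s,x,y}$ with initial value $I$ at $s$. For $t\ge s_2$, $Z_t:=U^{s_1}_t-U^{s_2}_t$ satisfies an SDE whose principal part is dissipative via \eqref{ConDP}, while its inhomogeneity consists of differences $\partial_\ast f(t,x,Y^{s_1}_t)-\partial_\ast f(t,x,Y^{s_2}_t)$ and the analogous $g$-differences multiplied by $U^{s_2}_t$ or $I$; by \eqref{A11}, \eqref{EPY} and the Cauchy bound on $Y$ just established, this inhomogeneity has $L^2$-norm at most $C\alpha(t)(1+|x|^2+|y|^2)^{1/2}e^{-\int_{s_2}^t\alpha(u)\,du}$. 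A Gronwall manipulation parallel to the proof of \eqref{EPY} then produces
\[
\EE\|U^{s_1}_t-U^{s_2}_t\|^2\le C(1+|x|^2+|y|^2)\,e^{-2\gamma\int_{s_2}^t\alpha(u)\,du},
\]
so $\{U^s_t\}$ is Cauchy in $L^2(\Omega)$ with some limit $\xi^x_t$, and $\sup_t\EE\|\xi^x_t\|^2\le C$ by \eqref{EPY}. The hard part is to identify $\xi^x_t$ with the mean-square derivative $\partial_x\eta^x_t$. The plan is to split, for each direction $e_i$,
\[
\tfrac{\eta^{x+\delta e_i}_t-\eta^x_t}{\delta}-\xi^x_t=\tfrac{\eta^{x+\delta e_i}_t-Y^{s,x+\delta e_i,y}_t}{\delta}+\Big[\tfrac{Y^{s,x+\delta e_i,y}_t-Y^{s,x,y}_t}{\delta}-U^s_t\Big]+\tfrac{Y^{s,x,y}_t-\eta^x_t}{\delta}+[U^s_t-\xi^x_t],
\]
and to let $s=s(\delta)\to-\infty$ slowly enough that $e^{-2\int_{s(\delta)}^t\alpha(u)\,du}/\delta^2\to 0$, which is possible by \eqref{A12}. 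The first and third terms then vanish by \eqref{F3.2}, and the fourth by the Cauchy bound above. The second term vanishes for each fixed $s$ as $\delta\to 0$ by Lemma \ref{DFY}, but the technical crux is to make this vanishing \emph{uniform in} $s$; I will close this gap by an additional It\^o-plus-dissipativity estimate on the Taylor-remainder SDE (in the spirit of the computation behind \eqref{Partialy Y}) that yields a bound $\le C(1+|x|^2+|y|^2)\delta^2$ with constant independent of $s$. The second-order statements in \eqref{Epartial_xeta} follow by iterating the same scheme on the second variational process, using the fourth estimate of \eqref{EPY} and the smoothness built into \eqref{A11}.
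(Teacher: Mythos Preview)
Your proposal is correct and follows essentially the same route as the paper: construct $\eta^x_t$ as the $L^2$-Cauchy limit of $Y^{s,x,y}_t$ as $s\to-\infty$ via the dissipativity \eqref{A10}, show $y$-independence, pass to the limit to obtain \eqref{F3.3}, and repeat the Cauchy argument on the variational processes to get \eqref{Epartial_xeta}. Your treatment of the identification $\xi^x_t=\partial_x\eta^x_t$ (the $s(\delta)$ splitting plus a uniform-in-$s$ Taylor-remainder bound) is in fact more explicit than the paper's, which dispatches this step in one line by citing \eqref{F3.2}, \eqref{F3.52} and the mean-square differentiability of $Y^{s,x,y}_t$.
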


\begin{proof}
(i) Note that, for any $h>0$, $t\ge s$, $x\in \RR^n$ and $y\in \RR^m$,
\begin{equation}
Y_{t}^{s-h,x,y}= Y_{s}^{s-h,x,y}+\int_{s}^{t} f(u, x, Y_{u }^{s-h,x,y} )\,du + \int_{s}^{t} g(u, x, Y_{u}^{s-h,x,y}) \,d W^2_{u}\label{F3.4}
\end{equation}
and
$$
Y_{t}^{s,x,y}= y+\int_{s}^{t} f(u,x,Y_{u}^{s,x,y})\,du + \int_{s}^{t} g(u,x,Y_{u}^{s,x,y}) \,dW^2_{u}.
$$

Put $Z^{h,x,y}_{s,t}:=Y_{t}^{s-h,x,y}-Y_{t}^{s,x,y}$. One has
\begin{align*}
Z^{h,x,y}_{s,t}=&Y_{s}^{s-h,x,y}-y+ \int_{s}^{t}(f(u,x,Y_{u }^{s-h,x,y})-f(u,x,Y_{u}^{s,x,y}))\,du\\
 &+ \int_{s}^{t}(g(u, x, Y_{u}^{s-h,x,y} )-g(u,x,Y_{u}^{s,x,y})) \,dW^2_{u}.
\end{align*}
By the It\^{o} formula,
\begin{align*}
 |Z^{h,x,y}_{s,t} |^{2}= & |Y_{s}^{s-h,x,y}-y |^{2} +2\int_{s}^{t} \langle Z^{h,x,y}_{s,u}, f(u,x,Y_{u}^{s-h,x,y})-f(u,x,Y_{u }^{s,x,y} ) \rangle \,du\\
 &+\int_{s}^{t} \|g(u,x,Y_{u}^{s-h,x,y} )- g(u,x,Y_{u}^{s,x,y} ) \|^{2}\, du\\
 &+2\int_{s}^{t}  \langle Z^{h,x,y}_{s,u}, g(u,x,Y_{u}^{s-h,x,y} )-g(u,x,Y_{u}^{s,x,y} )  \rangle \,d W^2_{u}.
\end{align*}
Then, according to \eref{A10},
\begin{eqnarray*}
\frac{d\EE |Z^{h,x,y}_{s,t} |^{2}}{dt}\leq -2\alpha(t)\EE |Z^{h,x,y}_{s,t} |^{2}.
\end{eqnarray*}
This along with \eref{uniformEY} yields that for any $h>0$, $t\ge s$, $x\in \RR^n$ and $y\in \RR^m$,
\begin{align*}
\EE |Z^{h,x,y}_{s,t}|^{2}\leq \EE |Y_{s}^{s-h,x,y}-y |^{2}e^{-2\int^t_s \alpha(u)\,du}\leq  C (1+|x|^2+|y|^2 )e^{-2\int^t_s \alpha(u)\,du}.
\end{align*}
By this and \eref{A12}, we know that for any $t\in \RR$, $x\in \RR^n$ and $y\in \RR^m$,
$\{Y^{s,x,y}_t\}_{t\geq s}$ is a Cauchy sequence in $L^2(\Omega ,\RR^m )$ as $s\to -\infty$. Hence, there exists a random variable $\eta^{x,y}_t\in L^2(\Omega ,\RR^m )$ such that for any $t,s\in \RR$ with $t\ge s$, $x\in \RR^n$ and $y\in \RR^m$,
\begin{equation}
\EE |{Y_{t}^{s,x,y}} -\eta^{x,y}_{t} |^2\leq C (1+|x|^2+|y|^2 )e^{-2\int^t_s \alpha(u)\,du} .\label{F3.5}
\end{equation}
Next, we are going to prove that $\eta^{x,y}_{t}$ is independent of $y$. Indeed, \eref{FR1} implies that for any $y_1,y_2\in \mathbb{R} ^{m} $,
$$\lim_{s \to -\infty} \EE|Y_{t}^{s,x,y_1} -Y_{t}^{s,x,y_2}|^2=0.$$
Note that
\begin{align*}
\EE |\eta^{x,y_1}_{t}  -  \eta^{x,y_2}_t|^2  \leq    C\EE  |\eta^{x,y_1}_{t}  -  Y_{t}^{s,x,y_1}  |^2  +  C \EE |\eta^{x,y_2}_{t}-Y_{t}^{s,x,y_2}  |^2  +   C\EE |Y_{t}^{s,x,y_1} -  Y_{t}^{s,x,y_2} |^2.
\end{align*}
Taking $s\rightarrow -\infty$ and using \eqref{F3.5}, we get $\EE |\eta^{x,y_1}_{t}-\eta^{x,y_2}_{t} |^2=0$. So~$\eta^{x,y_1}_{t}=\eta^{x,y_2}_{t}$ in $L^2(\Omega ,\RR^m )$. Below, we denote by $\eta_t^{x,y}$ by $\eta_t^x$.

Furthermore, according to \eqref{uniformEY},  \eqref{F3.5} and \eref{A12},
\begin{equation}
\sup_{t\in\RR}\EE|\eta^{x}_{t}|^2\leq C(1+|x|^2).\label{Eeta}
\end{equation}
On the other hand, using \eref{F3.5} and taking the limit as $h$ goes to $+\infty$ on both sides of \eref{F3.4},  we get that for any $t\geq s$ and $x\in \RR^n$,
$$
\eta^x_{t}=\eta^x_{s}+\int^t_s f(r,x,\eta^x_{r})\,dr+\int^t_s g(r,x,\eta^x_{r})\,d W^2_r.
$$ This finishes the proofs of \eqref{F3.2} and \eqref{F3.3}.

(ii) Note that for any $k\in \RR^n$,
\begin{align*}
 \partial _xY_{t}^{s-h,x,y}\cdot k
&= \partial _xY_{s}^{s-h,x,y}\cdot k  +    \int^t_s [\partial_xf(r,x,Y_{r}^{s-h,x,y})\cdot k  +  \partial_yf(r,x,Y_{r}^{s-h,x,y})\cdot(\partial_xY_{r}^{s-h,x,y}\cdot k)]\,dr\\
 &\quad +\int^t_s  [\partial_xg(r,x,Y_{r}^{s-h,x,y})\cdot k+\partial_yg(r,x,Y_{r}^{s-h,x,y})\cdot (\partial_xY_{r}^{s-h,x,y}\cdot k)  ]\,d W_{r}^{2}
\end{align*}
and
\begin{align*}
\partial _xY_{t}^{s,x,y}\cdot k= &\int^t_s [\partial_xf(r,x,Y_{r}^{s,x,y})\cdot k  +  \partial_yf(r,x,Y_{r}^{s,x,y})\cdot(\partial_xY_{r}^{s,x,y}\cdot k) ]\,dr\\
 &+\int^t_s  [\partial_xg(r,x,Y_{r}^{s,x,y})\cdot k  +  \partial_yg(r,x,Y_{r}^{s,x,y})\cdot (\partial_xY_{r}^{s,x,y}\cdot k)  ]\,d W_{r}^{2}.
\end{align*}
Then, following the same argument as in (i) and applying the Young inequality, we can obtain that
$$
\mathbb{E} |\partial_x Y^{s,x,y}_t\cdot k-\partial_x Y^{s-h,x,y}_t\cdot k |^{2} \leq C (1+|x|^2+|y|^2 )|k|^2e^{-2\gamma\int^t_s\alpha(u)\,du},
$$
where $\gamma\in (0,1)$  is the constant in \eqref{A12}.
In particular, due to \eqref{A12}, $\{\partial_xY^{s,x,y}_t\cdot k\}_{t\geq s}$ is a Cauchy sequence in $L^{2}(\Omega,\RR^m)$ as $s\to -\infty$. Furthermore, we can also repeat the argument in (i) and obtain that there exists a random variable $\xi^{x,k}_t\in L^{2}(\Omega,\RR^m)$ which is independent of $y$ such that for all $t\ge s$, $x, k\in \RR^n$ and $y\in \RR^m$,
\begin{equation}
\EE |\partial_x{Y_{t}^{s,x,y}}\cdot k -\xi^{x,k}_t  |^2\leq C (1+|x|^2+|y|^2 )|k|^2e^{-2\gamma\int^t_s \alpha(u)\,du}.\label{F3.52}
\end{equation}
Note that $\partial_xY^{s,x,y}_t\cdot k$ is linear with respect to the vector $k$, so there exists $\xi^{x}_t\in L^{2}(\Omega,\RR^{m\times n})$ such that $\xi^{x,k}_t=\xi^{x}_t\cdot k$.

By \eref{F3.2}, \eref{F3.52} and the fact that $Y^{s,x,y}_t$ is differentiable in the mean square sense with respect to $x$, we know that for any $t\in \RR$ and $x,k\in \RR^n$,
$$
\lim_{\delta\rightarrow 0}\mathbb{E}\left|\frac{\eta^{x+\delta k}_t-\eta^{x}_t}{\delta}-\xi^{x}_t\cdot k\right|^2=0.
$$
This yields that $\eta^x_{t}$ is differentiable in the mean square sense with respect to $x$, and $\partial _x\eta^x_{t} =\xi^{x}_t $. Thus the first inequality in \eref{Epartial_xeta} holds.
Furthermore, according to \eref{EPY} and \eref{F3.52}, $\sup_{t\in \RR}\EE\|\partial_x\eta^{x}_{t}\|^2\leq C$.

The other assertions in \eref{Epartial_xeta} for the second derivative $\partial^2_x\eta^x_{t}$ can be proved similarly, and the details are omitted here.
\end{proof}

\begin{remark}  From the proof above, we can obtain that,
if for any $0\leq i\leq 4$ and $0\leq i\leq 5$ with $1\leq i+j\leq 5$,
$$
\|\partial^{i}_x\partial^j_yf(t,x,y)\|+\|\partial^{i}_x\partial^j_yg(t,x,y)\|^2\leq C\alpha(t),
$$
then $\eta^x_{t}$ is fourth differentiable in the mean square sense with respect to $x$, and
\begin{align}
\sup_{t\in \RR}\sum^4_{i=1}\EE\|\partial^i_x\eta^{x}_{t}\|^4\leq C.\label{FourthDeta}
\end{align}
\end{remark}

In the following, let $\{P^{x}_{s,t}\}_{t\ge s}$ be the semigroup of the process $\{Y_{t}^{s,x,y}\}_{t\geq s}$, i.e., for any bounded measurable function $\varphi$ on $\RR^{m}$,
$$
P^{x}_{s,t}\varphi(y)=\EE\varphi(Y_{t}^{s,x,y}), \quad y\in\RR^{m}.
$$
Let $\eta^x_t$ be given in Lemma \ref{L:2.4}, and, for any $t\in \RR$, denote by $\mu^x_{t}$ the law of $\eta^x_t$.

\begin{proposition}\label{P:2.6} Suppose that Assumption {\rm\ref{A1}} holds. Then, $\{\mu^x_{t}\}_{t\in\RR}$ is an evolution system of measures for the semigroup $\{P^x_{s,t}\}_{t\geq s}$, i.e., for any $t\ge s$, $x\in \RR^n$ and $\varphi\in C_b(\RR^m)$,
\begin{equation}
\int_{\RR^m}P^x_{s,t} \varphi(y)\,\mu^x_s(dy)=\int_{\RR^m}\varphi(y)\,\mu^x_t(dy).\label{ESM}
\end{equation}  Moreover, there is a constant $C>0$ such that for any $t\ge s$, $x\in \RR^n$, $y\in \RR^m$ and Lipschitz continuous function $\phi$ on $\RR^m$,
\begin{equation}
 \left|P^x_{s,t} \phi(y)-\int_{\RR^m}\phi(z)\,\mu^x_t(dz) \right|\leq C {\rm Lip}(\phi)(1+|x|+|y|)e^{-\int^t_s\alpha(u)\,du},\label{WErodicity}
\end{equation}
where ${\rm Lip}(\phi):=\sup_{x\neq y}|\phi(x)-\phi(y)|/|x-y|$.
Furthermore, if $\{\nu^x_{t}\}_{t\in\RR}$ is another evolution system of measures for $\{P^x_{s,t}\}_{t\geq s}$ and satisfies that for all $x\in \RR^n$,
$$
\sup_{t\in\RR}\int_{\RR^m}\,|z|\,\nu^x_t(dz)<\infty,
$$
then $\nu^x_t=\mu^x_t$ for all  $t\in\RR$ and $x\in\RR^n$.
\end{proposition}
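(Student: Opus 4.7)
The strategy is to exploit that $\mu^x_t$ is, by construction, the distribution of the random variable $\eta^x_t$ from Lemma~\ref{L:2.4}, which itself satisfies the frozen SDE \eqref{FrozenE} via \eqref{F3.3}. For the evolution-system identity \eqref{ESM}, fix $s\leq t$, $x\in\RR^n$ and $\varphi\in C_b(\RR^m)$. Since $\eta^x_s$ is the $L^2$-limit of the $\mathcal{F}_s$-measurable random variables $\{Y^{s-h,x,y}_s\}_{h>0}$ (cf.\ the proof of Lemma~\ref{L:2.4}), it is itself $\mathcal{F}_s$-measurable. Because $(W^2_r-W^2_s)_{r\geq s}$ is independent of $\mathcal{F}_s$, conditioning on $\mathcal{F}_s$ together with strong uniqueness for \eqref{FrozenE} yields
$$
\EE\!\left[\varphi(Y^{s,x,\eta^x_s}_t)\,\big|\,\mathcal{F}_s\right]=(P^x_{s,t}\varphi)(\eta^x_s)\quad\text{a.s.}
$$
On the other hand, \eqref{F3.3} says that $\{\eta^x_r\}_{r\geq s}$ solves the same SDE driven by the same Brownian motion with initial value $\eta^x_s$ at time $s$, so strong uniqueness forces $\eta^x_t=Y^{s,x,\eta^x_s}_t$ a.s. Taking expectations,
$$
\int_{\RR^m} P^x_{s,t}\varphi(y)\,\mu^x_s(dy)=\EE[(P^x_{s,t}\varphi)(\eta^x_s)]=\EE\varphi(\eta^x_t)=\int_{\RR^m}\varphi(y)\,\mu^x_t(dy),
$$
which is \eqref{ESM}.

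For the exponential decay \eqref{WErodicity}, I realize $Y^{s,x,y}$ and $\eta^x$ on a common probability space driven by the same $W^2$. Since $\int_{\RR^m}\phi\,d\mu^x_t=\EE\phi(\eta^x_t)$, for any Lipschitz $\phi$,
$$
\left|P^x_{s,t}\phi(y)-\int_{\RR^m}\phi(z)\,\mu^x_t(dz)\right|\leq {\rm Lip}(\phi)\,\bigl(\EE|Y^{s,x,y}_t-\eta^x_t|^2\bigr)^{1/2},
$$
and the claim follows directly from the first estimate in \eqref{F3.2}.

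For the uniqueness assertion, let $\{\nu^x_t\}_{t\in\RR}$ be another evolution system of measures satisfying $\sup_{t\in\RR}\int_{\RR^m}|z|\,\nu^x_t(dz)<\infty$. Fix $t\in\RR$ and a bounded Lipschitz function $\phi$ on $\RR^m$. For any $s\leq t$, applying \eqref{ESM} to both $\{\mu^x_\cdot\}$ and $\{\nu^x_\cdot\}$ and using that $\nu^x_s$ is a probability measure,
$$
\int_{\RR^m}\phi\,d\nu^x_t-\int_{\RR^m}\phi\,d\mu^x_t=\int_{\RR^m}\left[P^x_{s,t}\phi(y)-\int_{\RR^m}\phi(z)\,\mu^x_t(dz)\right]\nu^x_s(dy).
$$
Invoking \eqref{WErodicity} bounds the right-hand side by
$$
C\,{\rm Lip}(\phi)\left(1+|x|+\sup_{r\in\RR}\int_{\RR^m}|y|\,\nu^x_r(dy)\right)e^{-\int_s^t\alpha(u)\,du},
$$
which tends to $0$ as $s\to-\infty$ thanks to \eqref{A12}. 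Since bounded Lipschitz functions form a measure-determining class on $\RR^m$, we conclude $\nu^x_t=\mu^x_t$ for every $t\in\RR$ and $x\in\RR^n$.

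The only nontrivial subtlety lies in the identification $Y^{s,x,\eta^x_s}_t=\eta^x_t$ used in part (i), which hinges on the $\mathcal{F}_s$-measurability of $\eta^x_s$ together with pathwise uniqueness; granting this flow/Markov-type identity, the three assertions all reduce to the decay estimate \eqref{F3.2} already established in Lemma~\ref{L:2.4} (combined, for uniqueness, with the fact that bounded Lipschitz functions determine a probability measure on $\RR^m$).
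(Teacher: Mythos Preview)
Your proof is correct, and for the exponential decay \eqref{WErodicity} and the uniqueness assertion it is essentially identical to the paper's. The only place you diverge is in the proof of the evolution-system identity \eqref{ESM}: the paper argues via limits and Chapman--Kolmogorov, observing from \eqref{F3.2} that $\lim_{r\to-\infty}P^x_{r,t}\varphi(y)=\int\varphi\,d\mu^x_t$ for every $\varphi\in C_b(\RR^m)$, and then writing
\[
\int_{\RR^m}P^x_{s,t}\varphi\,d\mu^x_s=\lim_{r\to-\infty}P^x_{r,s}(P^x_{s,t}\varphi)(y)=\lim_{r\to-\infty}P^x_{r,t}\varphi(y)=\int_{\RR^m}\varphi\,d\mu^x_t,
\]
using only that $P^x_{s,t}\varphi\in C_b(\RR^m)$. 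Your route instead invokes the pathwise identification $\eta^x_t=Y^{s,x,\eta^x_s}_t$ and the conditioning formula $\EE[\varphi(Y^{s,x,\eta^x_s}_t)\mid\mathcal F_s]=(P^x_{s,t}\varphi)(\eta^x_s)$. Both are valid; the paper's version is a bit softer in that it sidesteps the measurable-flow/Markov-property verification you flag as a subtlety, at the cost of needing the Feller-type fact $P^x_{s,t}\varphi\in C_b(\RR^m)$ (which follows from \eqref{FR1}). Your argument, on the other hand, makes explicit use of \eqref{F3.3} and is arguably more transparent about \emph{why} $\mu^x_t$ is invariant, namely because $\eta^x$ is itself a solution of the frozen SDE.
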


\begin{proof}
(i) According to \eqref{F3.2}, for any $t\in \RR$, $x\in \RR^n$ and $ \varphi \in C_b(\mathbb{R}^{m}) $,
$$
\lim_{s \to -\infty} P^x_{s,t}\varphi (y)=\lim_{s \to -\infty}\EE\varphi (Y^{s,x,y}_t)=\EE\varphi (\eta^x_{t} )=\int_{\mathbb{R}^{m}}\varphi (y)\,\mu^x_{t}(dy).
$$
By this and the fact that $P^x_{s,t}\varphi\in C_b(\RR^m)$,
$$\int_{\mathbb{R}^{m}}P^x_{s,t}\varphi(y)\,\mu^x_s(dy)=\lim_{r\to -\infty}P^x_{r,s} P^x_{s,t}\varphi (y)=\lim_{r\to-\infty}P^{x}_{r,t}\varphi (y)=\int_{\mathbb{R}^{m}}\varphi(y)\,\mu^x_t(dy).$$ This proves \eqref{ESM}.

For any Lipschitz function $\phi$ on $\RR^m$, \eref{F3.2} yields that
\begin{align*}
\left|P^x_{s,t} \phi(y)  -  \int_{\RR^m}\phi(z)\,\mu^x_t(dz)\right|
=  {\rm Lip}(\phi)\EE|Y^{s,x,y}_t - \eta^x_{t}|
\le  C {\rm Lip}(\phi)(1 + |x| + |y|)e^{-\int^t_s\alpha(u)\,du},
\end{align*} and so \eref{WErodicity} holds.

(ii) Let $\{\nu^x_{t}\}_{t\in\RR}$ be another evolution system of measures for $\{P^x_{s,t}\}_{t\geq s}$ so that
$
\sup_{t\in\RR}\int_{\RR^m}\,|z|\,\nu^x_t(dz)<\infty.
$
It holds that for any $\varphi \in C_b^1(\mathbb{R}^{m})$
$$\int_{\mathbb{R}^{m}}P^x_{s,t} \varphi (y)\,\nu^x _s(dy)=\int_{\mathbb{R}^{m}}\varphi (y)\,\nu^x_t(dy). $$
In order to prove $\nu^x_t=\mu^x_t$, it is sufficient to verify that for any $\varphi \in C_b^1(\mathbb{R}^{m})$
\begin{equation}
\lim_{s \to -\infty} \int_{\mathbb{R}^{m}}P^x_{s,t} \varphi (y)\,\nu^x_s(dy)=\int_{\mathbb{R}^{m}}\varphi (y)\,\mu^x_t(dy).\label{SufC}
\end{equation}
Indeed, note that
$$\int_{\mathbb{R}^{m}}P^x_{s,t} \varphi (y)\,\nu^x_s(dy)=\int_{\mathbb{R}^{m}} \left(P^x_{s,t} \varphi (y)-\int_{\mathbb{R}^{m}}\varphi (z) \,\mu^x_t(dz)\right)\,\nu^x_s(dy) +\int_{\mathbb{R}^{m}}\varphi (y)\,\mu^x_t(dy).$$
\eref{WErodicity} in turn yields that
\begin{align*}
 \left |\int_{\mathbb{R}^{m}}\left(P^x_{s,t} \varphi (y)-\int_{\mathbb{R}^{n}}\varphi (z) \mu^x_t(dz)\right )\,\nu^x_s(dy) \right|
 &\le  C\int_{\mathbb{R}^{m}}(1 + |x| + |y|)\,\nu^x_s(dy)e^{-\int^t_s\alpha(u)\,du}\\
 &
\leq C\left(1 + |x| + \sup_{t\in\RR}\int_{\RR^m} |z|\,\nu^x_t(dz)\right)e^{-\int^t_s\alpha(u)\,du} .
\end{align*}
It shows that \eref{SufC} holds by taking $s \to -\infty$ in the inequality above.
\end{proof}

\subsection{Nonautonomous Poisson equations}

Let $H:\RR\times\RR^n\times\RR^m\rightarrow \RR^n$ satisfy the  following \emph{centering condition}
\begin{equation}\label{CenCon}
\int_{\RR^m}H(s,x,y)\,\mu^{x}_s(dy)=0,\quad  s\in \RR,x\in\RR^n
\end{equation}
 and that for $i=0,1,2$ and $j=1,2,3$ with $1\leq i+j\leq 3$,
\begin{equation}
\sup_{t\in\RR,x\in\RR^n,y\in\RR^m} \|\partial_{x}^{i}\partial_{y}^{j}H(t,x,y) \|<\infty.\label{CenPol}
\end{equation}
For fixed $x\in  \RR^{n}$, we consider the following nonautonomous Poisson equation:
\begin{equation} \label{TimePE}
\partial_s\Phi(s,x,y)+\mathscr{L}^{x}_2(s)\Phi(s,x,\cdot)(y)=-H(s,x,y),\quad s\in\RR, y\in\RR^{m},
\end{equation}
where $\mathscr{L}^{x}_{2}(s)$ is the generator of the SDE \eref{FrozenE}, that is,
\begin{equation}
\mathscr{L}^{x}_{2}(s)\varphi(y):=\langle f(s,x,y),\nabla\varphi(y)\rangle+\frac{1}{2}\text{Tr} [(gg)^{\ast}(s,x,y)\nabla^2\varphi(y) ],\quad \varphi\in C^2(\RR^m).\label{L_2}
\end{equation}

\begin{proposition}\label{P3.6}
Suppose that Assumption {\rm\ref{A1}} holds. For any $H:\RR\times\RR^n\times\RR^m\rightarrow \RR^n$ satisfying \eqref{CenCon} and \eqref{CenPol}, define
\begin{equation}
\Phi(s,x,y):=\int^{+\infty}_{s}\EE H(r,x,Y^{s,x,y}_r)\,dr.\label{SPE}
\end{equation}
Then, $\Phi(s,x,y)$ is a solution to the nonautonomous Poisson equation \eqref{TimePE}. Moreover, there exists a constant $C>0$ such that for $s\in\RR$, $x\in \RR^n$ and $y\in \RR^m$,
\begin{equation}\label{E1}
\begin{split}
&|\Phi(s,x,y)|\leq C(1+|x|+|y|)\Lambda(s),\\
& \|\partial_x \Phi(s,x,y)\|+ \|\partial^2_x \Phi(s,x,y)\|\leq C(1+|x|+|y|)\Lambda_{\gamma}(s),\\
& \|\partial_y \Phi(s,x,y)\|\leq C\Lambda(s),\quad  \|\partial^2_y \Phi(s,x,y)\|\leq C\Lambda_{\gamma}(s),
\end{split}
\end{equation}
where $\Lambda_{\gamma}(s)$ is defined in \eqref{A12} and $\Lambda(s)=\Lambda_1(s)$.
\end{proposition}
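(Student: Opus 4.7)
The plan is to first establish the convergence of the integral defining $\Phi(s,x,y)$ together with the claimed bound on $|\Phi|$, then verify the Poisson equation via the Kolmogorov backward equation, and finally derive the derivative estimates by differentiating under the integral sign and exploiting Lemmas~\ref{DFY} and~\ref{L:2.4}, as well as, crucially for the $x$-derivatives, a differentiated version of the centering condition.

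For the well-posedness of \eqref{SPE} and the bound on $|\Phi|$, the key observation is that, since $\mu^x_r$ is the law of $\eta^x_r$, the centering condition \eqref{CenCon} reads $\EE H(r,x,\eta^x_r)=0$. Writing $\EE H(r,x,Y^{s,x,y}_r)=\EE[H(r,x,Y^{s,x,y}_r)-H(r,x,\eta^x_r)]$ and using that $H$ is uniformly Lipschitz in $y$ by \eqref{CenPol} together with \eqref{F3.2} yields
\[
|\EE H(r,x,Y^{s,x,y}_r)|\leq C(1+|x|+|y|)e^{-\int^r_s\alpha(u)\,du},
\]
which integrates in $r\in[s,\infty)$ to produce $|\Phi(s,x,y)|\leq C(1+|x|+|y|)\Lambda(s)$. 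Next, rewriting $\Phi(s,x,y)=\int_s^\infty P^x_{s,r}H(r,x,\cdot)(y)\,dr$ and using the Kolmogorov backward equation $\partial_sP^x_{s,r}\varphi=-\mathscr{L}^x_2(s)P^x_{s,r}\varphi$ for $r>s$, the boundary term at $r=s$ contributes $-H(s,x,y)$ while differentiation under the integral contributes $-\mathscr{L}^x_2(s)\Phi(s,x,\cdot)(y)$, so \eqref{TimePE} holds.

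The $y$-derivatives are direct. Differentiating under the integral gives $\partial_y\Phi(s,x,y)=\int_s^\infty\EE[\partial_y H(r,x,Y^{s,x,y}_r)\,\partial_y Y^{s,x,y}_r]\,dr$; bounding $\|\partial_y H\|\leq C$ by \eqref{CenPol} and using Jensen's inequality on the fourth-moment estimate in \eqref{EPY} gives $\EE\|\partial_y Y^{s,x,y}_r\|\leq e^{-\int^r_s\alpha(u)\,du}$, which integrates to $\Lambda(s)$. For $\partial^2_y\Phi$, the product rule yields a term containing $\partial^2_y Y^{s,x,y}_r$ and one containing $\partial_y Y^{s,x,y}_r\otimes\partial_y Y^{s,x,y}_r$; the bounds $\EE\|\partial^2_y Y\|^2\leq e^{-2\gamma\int\alpha}$ and $\EE\|\partial_y Y\|^4\leq e^{-4\int\alpha}$ from \eqref{EPY}, combined with Cauchy--Schwarz, produce a dominating decay of order $e^{-\gamma\int^r_s\alpha(u)\,du}$, hence the factor $\Lambda_\gamma(s)$.

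The main obstacle is the $x$-derivatives, since $\partial_x Y^{s,x,y}_r$ does not decay with $r$. The remedy is to differentiate the centering identity $\EE H(r,x,\eta^x_r)=0$ in $x$, which gives $\EE[\partial_x H(r,x,\eta^x_r)+\partial_y H(r,x,\eta^x_r)\,\partial_x\eta^x_r]=0$. Subtracting this from the expression for $\partial_x\EE H(r,x,Y^{s,x,y}_r)$ and splitting each difference into a ``state part'' ($Y^{s,x,y}_r-\eta^x_r$) handled by \eqref{F3.2} and a ``derivative part'' ($\partial_x Y^{s,x,y}_r-\partial_x\eta^x_r$) handled by \eqref{Epartial_xeta}, each resulting piece is controlled by Cauchy--Schwarz using the uniform moment bounds on $Y^{s,x,y}_r$, $\eta^x_r$, $\partial_x Y^{s,x,y}_r$ and $\partial_x\eta^x_r$ from Lemmas~\ref{DFY} and~\ref{L:2.4}. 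The slowest decay that survives is $e^{-\gamma\int^r_s\alpha(u)\,du}$, producing $\|\partial_x\Phi(s,x,y)\|\leq C(1+|x|+|y|)\Lambda_\gamma(s)$. The estimate for $\partial^2_x\Phi$ follows by the same mechanism applied to the twice-differentiated centering identity, this time invoking the $L^2$-convergence of $\partial^2_x Y^{s,x,y}_r$ to $\partial^2_x\eta^x_r$ in \eqref{Epartial_xeta}; the bookkeeping is longer but every term reduces, after Cauchy--Schwarz, to the product of a bounded moment factor and a decay factor of at least $e^{-\gamma\int^r_s\alpha(u)\,du}$, which again integrates to $\Lambda_\gamma(s)$.
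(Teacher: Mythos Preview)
Your proposal is correct and follows essentially the same route as the paper's own proof: both arguments exploit the centering identity $\EE H(r,x,\eta^x_r)=0$ (and its $x$-derivatives) to write $\EE H(r,x,Y^{s,x,y}_r)$ and its $x$-derivatives as differences that decay via \eqref{F3.2} and \eqref{Epartial_xeta}, while the $y$-derivatives are handled directly through the chain rule and \eqref{EPY}. The decomposition, the key lemmas invoked, and the Cauchy--Schwarz bookkeeping all match the paper's argument.
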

\begin{remark}
Here we do not claim that the solution to nonautonomous Poisson equation \eref{TimePE} is unique. Furthermore,  additionally assume that the solution $\Phi(t,x,y)$ satisfies the \emph{centering condition} (i.e., $\int_{\RR^m}\Phi(s,x,y)\,\mu^{x}_s(dy)=0$ for any $s\in\RR$ and $ x\in\RR^n$) and \eref{E1}. Then the solution is unique. In fact, if $\tilde\Phi(t,x,y)$ is another solution to \eref{TimePE} and satisfies the \emph{centering condition} and \eref{E1}, then, applying It\^{o}'s formula and taking expectation on both sides, we find
\begin{align*}
\EE\tilde\Phi(t,x,Y^{s,x,y}_{t}) = &\tilde\Phi(s,x,y) + \EE\int^t_s(\partial_r\tilde\Phi(r,x,Y^{s,x,y}_{r})
+ \mathscr{L}^{x}_2(r)\tilde\Phi(r,x,\cdot)(Y^{s,x,y}_{r}))\,dr\\
=& \tilde\Phi(s,x,y) - \int^t_s \EE H(r,x,Y^{s,x,y}_{r})\,dr.
\end{align*}
On the other hand, by \eqref{WErodicity} and \eqref{E1},
\begin{align*}|\EE\tilde\Phi(t,x,Y^{s,x,y}_{t})|=&\Big|\EE\tilde\Phi(t,x,Y^{s,x,y}_{t})-\int_{\RR^m}\tilde\Phi(t,x,y)\,\mu_t^x(dy)\Big|\\
\le& C\|\tilde\Phi(t,x,\cdot)\|_{{\rm Lip}}(1+|x|+|y|)e^{-\int^t_s\alpha(u)\,du}\le C(1+|x|+|y|)\Lambda(t)e^{-\int^t_s\alpha(u)\,du}.\end{align*}
Then, $\lim_{t\to +\infty}|\EE\tilde\Phi(t,x,Y^{s,x,y}_{t})|=0$ and so
 $
0=\tilde\Phi(s,x,y)-\Phi(s,x,y)$ for all $s\in\RR$, $x\in\RR^n$ and $y\in\RR^m.
$
\end{remark}

\begin{proof} [Proof of Proposition $\ref{P3.6}$]
(i) It is well known (see e.g. \cite[(1.2)]{DR2008}) that $P^{x}_{s,r}H(r,x,\cdot)(y)=\EE H(r,x,Y^{s,x,y}_r)$ and
$$
\frac{d}{ds}P^{x}_{s,r}H(r,x,\cdot)(y)=-\mathscr{L}^{x}_2(s) [P^{x}_{s,r}H(r,x,\cdot)(y) ].
$$
Hence, according to the definition of $\Phi(s,x,y)$,
\begin{align*}
\partial_s\Phi(s,x,y)=&-H(s,x,y)+\int^{+\infty}_s \frac{d}{ds}P^{x}_{s,r}H(r,x,\cdot)(y)\, dr\\
= &-H(s,x,y)+\int^{+\infty}_s -\mathscr{L}^{x}_2(s) [P^{x}_{s,r}H(r,x,\cdot)(y) ] \,dr\\
=&-H(s,x,y)-\mathscr{L}^{x}_2(s)\Phi(s,x,y),
\end{align*}
which implies $\Phi(s,x,y)$ is a solution to the equation \eref{TimePE}.

Note that for any $l,l_1,l_2\in \RR^m$,
$$
\partial_y \EE H(r,x,Y^{s,x,y}_r)\cdot l=\EE[\partial_y H(r,x,Y^{s,x,y}_r)\cdot(\partial_yY^{s,x,y}_r\cdot l)]
$$ and
\begin{align*}
&\partial^2_y \EE H(r,x,Y^{s,x,y}_r)\cdot(l_1, l_2)\\
&=  \EE  [\partial^2_y H(r,x,Y^{s,x,y}_r)\cdot (\partial_yY^{s,x,y}_r\cdot l_1, \partial_yY^{s,x,y}_r\cdot l_2  ) ]
 + \EE[\partial_y H(r,x,Y^{s,x,y}_r)\cdot(\partial^2_yY^{s,x,y}_r\cdot (l_1, l_2))].
\end{align*}
Then, according to \eref{CenPol} and \eref{EPY},
\begin{align*}\label{partialyP}
 \|\partial_y \EE H(r,x,Y^{s,x,y}_r)\|\leq Ce^{-\int^r_s \alpha(u)\,du},\quad \|\partial^2_y \EE H(r,x,Y^{s,x,y}_r)\|\leq Ce^{-\gamma\int^r_s \alpha(u)\,du}.
\end{align*}
Hence,
\begin{align*}
\|\partial_y\Phi(s,x,y)\|\leq &\int^{+\infty}_{s}\| \partial_y \EE H(r,x,Y^{s,x,y}_r)\|\,dr
\leq   C\int^{+\infty}_{s}e^{-\int^r_s\alpha(u)\,du}\,dr
\leq   C\Lambda(s).
\end{align*}
Similarly,
$$
\|\partial^2_y\Phi(s,x,y)\|\leq C\Lambda_\gamma(s).
$$
Hence, we prove the third inequality in \eqref{E1}.

(ii) Under the centering condition \eref{CenCon},  for $r\geq s$,
$$
\EE H(r,x,Y^{s,x,y}_r)
=\EE H(r,x,Y^{s,x,y}_r)-\EE H(r,x,\eta^x_r).
$$ Note that, by \eref{CenPol},
$$|H(r,x,y_1)-H(r,x,y_2)|\leq C|y_1-y_2|.$$
Then, according to \eref{WErodicity},
\begin{align*}
|\Phi(s,x,y)|\leq &\int^{+\infty}_{s}\left|\EE H(r,x,Y^{s,x,y}_r)-\EE H(r,x,\eta^x_r)\right|\,dr\\
\leq & C(1+|x|+|y|)\int^{+\infty}_{s}e^{-\int^r_s\alpha(u)\,du}\,dr\\
=&C(1+|x|+|y|)\Lambda(s),
\end{align*}
which proves the first inequality in \eqref{E1}.

(iii) Thanks to \eref{CenCon} again, it also holds that for any unit vector $k \in \RR^n$,
\begin{align*}
 \partial_x\EE H(r,x,Y^{s,x,y}_r)\cdot k&=\left[\EE \partial_x H(r,x,Y^{s,x,y}_r)\cdot k-\EE \partial_x H(r,x,\eta^x_r)\cdot k\right]\\
&\quad+\EE\left[\partial_y H(r,x,Y^{s,x,y}_r)\cdot(\partial_xY^{s,x,y}_r\cdot k)\right]-\EE \left[\partial_y H(r,x,\eta^x_r)\cdot(\partial_x \eta^x_r\cdot k)\right]\\
&=:I_1+I_2.
\end{align*}
It follows from \eqref{F3.2} that
$$
|I_1|\leq C [\EE |Y^{s,x,y}_r-\eta^x_r |^2  ]^{1/2}
\leq C(1+|x|+|y|)e^{-\int^r_s \alpha(u)\,du}.$$
 On the other hand, \eqref{F3.2}, \eref{Epartial_xeta} and \eref{EPY} show that
\begin{align*}
|I_2|\leq& |\EE [\partial_y H(r,x,Y^{s,x,y}_r)\cdot(\partial_xY^{s,x,y}_r\cdot k)  ]-\EE   [\partial_y H(r,x,\eta^x_r)\cdot(\partial_xY^{s,x,y}_r\cdot k)  ]  |\\
 &+ |\EE  [\partial_y H(r,x,\eta^x_r)\cdot(\partial_xY^{s,x,y}_r\cdot k)  ]-\EE  [\partial_y H(r,x,\eta^x_r)\cdot(\partial_x \eta^x_r\cdot k)  ]  |\\
\leq &C (\EE|Y^{s,x,y}_r-\eta^x_r|^2 )^{1/2} (\EE|\partial_xY^{s,x,y}_r\cdot k|^2 )^{1/2}+C (\EE|\partial_xY^{s,x,y}_r\cdot k-\partial_x\eta^x_r\cdot k|^2 )^{1/2}\\
\leq &C(1+|x|+|y|)e^{-\gamma\int^r_s \alpha(u)\,du}.
\end{align*}
Hence,
\begin{align*}
\|\partial_x\Phi(s,x,y)\|\leq   C(1+|x|+|y|)\int^{+\infty}_{s}e^{-\gamma\int^r_s \alpha(u)\,du}\,dr\leq C(1+|x|+|y|)\Lambda_{\gamma}(s).
\end{align*}

Furthermore, note that for any unit vectors $k_1,k_2\in \RR^n$,
\begin{align*}
& \partial^2_x \{\EE H(r,x,Y^{s,x,y}_r)-\EE H(r,x,\eta^x_r) \}\cdot (k_1,k_2)\\
&= \EE [\partial^2_x H(r,x,Y^{s,x,y}_r)\cdot (k_1,k_2)]-\EE [\partial^2_x H(r,x,\eta^x_r)\cdot (k_1,k_2)]\\
&\quad+\EE [\partial_y\partial_{x} H(r,x,Y^{s,x,y}_r)\cdot (k_1,\partial_xY^{s,x,y}_r\cdot k_2)]-\EE [\partial_y\partial_{x} H(r,x,\eta^x_r)\cdot (k_1,\partial_x\eta^x_r\cdot k_2)]\\
&\quad+\EE [\partial_{x}\partial_{y} H(r,x,Y^{s,x,y}_r)\cdot(\partial_xY^{s,x,y}_r\cdot k_1, k_2) ]-\EE [\partial_x\partial_{y} H(r,x,\eta^x_r)\cdot(\partial_x\eta^x_r\cdot k_1, k_2) ]\\
&\quad+\EE [\partial^2_y H(r,x,Y^{s,x,y}_r)\cdot(\partial_xY^{s,x,y}_r\cdot k_1,\partial_xY^{s,x,y}_r\cdot k_2) ]-\EE  [\partial^2_y H(r,x,\eta^x_r)\cdot(\partial_x \eta^x_r\cdot k_1,\partial_x \eta^x_r\cdot k_2)  ]\\
&\quad+\EE  [\partial_y H(r,x,Y^{s,x,y}_r)\cdot(\partial^2_xY^{s,x,y}_r\cdot (k_1,k_2))  ]-\EE [\partial_y H(r,x,\eta^x_r)\cdot(\partial^2_x \eta^x_r\cdot (k_1,k_2))  ]\\
&=: \sum^5_{j=1}\tilde{I}_j.
\end{align*}

From \eref{F3.2} and \eqref{CenPol}, one has
$$
|\tilde I_1|\leq C\EE|Y^{s,x,y}_r-\eta^x_r|\leq C(1+|x|+|y|)e^{-\int^r_s \alpha(u)\,du}.
$$
Using \eref{EPY}, \eref{F3.2}, \eref{Epartial_xeta} and \eqref{CenPol}, we obtain
\begin{align*}
|\tilde I_2|
\leq &C\EE\left|Y^{s,x,y}_r-\eta^x_r\right|\left|\partial_xY^{s,x,y}_r\cdot k_2\right|+C\EE\left|\partial_xY^{s,x,y}_r\cdot k_2-\partial_x\eta^x_r\cdot k_2\right| \\
\leq &C(1+|x|+|y|)e^{-\gamma\int^r_s \alpha(u)\,du}.
\end{align*}
Similarly, we can get that
$$
|\tilde I_3|
\leq C(1+|x|+|y|)e^{-\gamma\int^r_s \alpha(u)\,du}.$$

For $\tilde I_4$ and $\tilde I_5$, we apply \eref{EPY}, \eref{F3.2}, \eref{Epartial_xeta} and \eqref{CenPol} to obtain
\begin{align*}
|\tilde I_4|
\leq &C\EE(|Y^{s,x,y}_r-\eta^x_r||\partial_xY^{s,x,y}_r\cdot k_1||\partial_xY^{s,x,y}_r\cdot k_2|)\\
& +C\EE(|\partial_xY^{s,x,y}_r\cdot k_1-\partial_x\eta^x_r\cdot k_1||\partial_xY^{s,x,y}_r\cdot k_2|)\\
& +C\EE(|\partial_xY^{s,x,y}_r\cdot k_2-\partial_x\eta^x_r\cdot k_2||\partial_x\eta^x_r\cdot k_1|)\\
\leq &C(1+|x|+|y|)e^{-\gamma\int^r_s \alpha(u)\,du}
\end{align*}
and
\begin{align*}
|\tilde I_5|
\leq &C\EE(|Y^{s,x,y}_r-\eta^x_r||\partial^2_xY^{s,x,y}_r\cdot (k_1,k_2)|) +C\EE|\partial^2_xY^{s,x,y}_r\cdot (k_1,k_2)-\partial^2_x\eta^x_r\cdot (k_1,k_2)| \\
\leq &C(1+|x|+|y|)e^{-\gamma\int^r_s \alpha(u)\,du}.
\end{align*}
Therefore,
\begin{align*}
 \|\partial^2_x\Phi(s,x,y)\|\leq &\int^{+\infty}_{s} \| \partial^2_x \{\EE H(r,x,Y^{s,x,y}_r)-\EE H(r,x,\eta^x_r) \}\|\,dr\\
\leq & C(1+|x|+|y|)\int^{+\infty}_{s}e^{-\gamma\int^r_s \alpha(u)\,du}\,dr \\
= & C(1+|x|+|y|)\Lambda_{\gamma}(s).
\end{align*}
Combining with all the estimates above, we obtain the second inequality in \eref{E1}.  The proof is complete.
\end{proof}

\section{Time-inhomogeneous multi-scale SDEs: general case}

As an application of nonautonomous Poisson equations discussed in Section 2, we study the averaging principle of the following time-inhomogeneous multi-scale SDE:
\begin{equation} \label{Equation}
\begin{cases}
d X^{\varepsilon}_t = b(X^{\varepsilon}_{t}, Y^{\varepsilon}_{t})\,dt+\sigma(X^{\varepsilon}_{t},Y^{\varepsilon}_{t})\,d W^1_t,\quad X^{\varepsilon}_0=x\in\RR^{n}, \vspace{1mm}\\
d Y^{\varepsilon}_t =\varepsilon^{-1}f(t/\varepsilon,X^{\varepsilon}_{t},Y^{\varepsilon}_{t})\,dt+\varepsilon^{-1/2}g(t/\varepsilon,X^{\varepsilon}_{t},Y^{\varepsilon}_{t})\,d W^{2}_t,\quad Y^{\varepsilon}_0=y\in\RR^{m},
\end{cases}
\end{equation}
where $b\in C^{2,3}(\RR^n\times\RR^m,\RR^n)$, $\sigma\in C(\RR^{n}\times\RR^m, \RR^{n\times d_1})$,  $f(t,\cdot,\cdot)\in C^{2,3}(\RR^n\times\RR^m,\RR^m)$ and $g(t,\cdot,\cdot)\in C^{2,3}(\RR^n\times\RR^m,\RR^{m\times d_2})$ for any $t\ge0$,
and $W^1:=\{W_t^1\}_{t\ge0}$ and $W^2:=\{W_t^2\}_{t\ge0}$ are two independent $d_1$ and $d_2$ dimensional standard Brownian motions respectively. From this section,  we always suppose that the following conditions hold.

\begin{conditionB}\label{B1}
Suppose $f$ and $g$ satisfies Assumption {\rm\ref{A1}} for any $t\geq 0$, $x\in \RR^n$ and $y\in \RR^m$ with $\alpha: [0,\infty)\to (0,\infty)$, which fulfills
\begin{equation}
\int_{0}^{+\infty}  \alpha(u)\,du =\infty, \quad \int^{+\infty}_{t} e^{-\gamma\int^r_t\alpha(u)\,du}\,dr<\infty\hbox{ for all } t\in [0,\infty)\label{alpha2}
\end{equation}
with some $\gamma\in (0,1)$.
\end{conditionB}

\begin{conditionB}\label{A2}
There exists a constant $C>0$ such that for any $x_1,x_2\in \RR^n$ and $y_1,y_2\in \RR^m$,
\begin{equation}\label{A21}
|b(x_1,y_1)-b(x_2,y_2)|+ \|\sigma(x_1,y_1)-\sigma(x_2,y_2)\| \leq C(|x_1-x_2|+|y_1-y_2|),
\end{equation}
and for any $i=0,1,2$ and $j=1,2,3$ with $1\leq i+j\leq 3$,
\begin{equation}
\sup_{x\in\RR^n,y\in\RR^m}\|\partial^{i}_x\partial^j_yb(x,y)\|\leq C,\quad \sup_{x\in\RR^n,y\in\RR^m}\|\sigma(x,y)\|\leq C.\label{A22}
\end{equation}
\end{conditionB}

\begin{remark}
To make full use of the evolution system of measures in Section 2, we need to extend the fast component $\{Y^{\varepsilon}_t\}_{t\ge0}$ in the multi-scale SDE \eqref{Equation} to $\{Y^{\varepsilon}_t\}_{t\in \RR}$. Take another $d_2$-dimensional standard Brownian motion $\{\tilde{W}^2_t\}_{t\geq 0}$ that is independent of $\{W^2_t\}_{t\geq 0}$, and set
\begin{equation*}
\hat{W}^2_t=\begin{cases}
W^2_t,\quad\,\, t\geq 0,\\
\tilde{W}^2_{-t}, \quad t< 0.
\end{cases}
\end{equation*}
For any $x\in\RR^n$ and $y\in\RR^m$, define
\begin{equation*}
\hat{\alpha}(t)=
\begin{cases}
\alpha(t),\quad\,\,  t\geq 0,\\
\alpha(-t),\quad t< 0,
\end{cases}
\hat f(t,x,y)=
\begin{cases}
f(t,x,y),\quad\,\,  t\geq 0,\\
f(-t,x,y),\quad  t< 0,
\end{cases}
\hat g(t,x,y)=
\begin{cases}
g(t,x,y),\quad  t\geq 0,\\
g(-t,x,y), \quad  t< 0.
\end{cases}
\end{equation*}
Thus, $\hat{f}$ and $\hat{g}$ satisfy Assumption {\rm\ref{A1}} with the function $\hat{\alpha}$ on $\RR$. Hence, the associated time-inhomogeneous frozen SDE
\begin{equation*}
dY_{t}=\hat f(t,x,Y_{t})\,dt+\hat g(t,x,Y_{t})\,d \hat W^2_t,\quad Y_{s}=y\in \RR^{m}, s\in \RR,
\end{equation*}
admits a unique strong solution $Y^{s,x,y}:=\{Y^{s,x,y}_t\}_{t\ge s}$. Let $\{P^{x}_{s,t}\}_{t\ge s}$ be the semigroup of the process $\{Y_{t}^{s,x,y}\}_{t\geq s}$. In the rest of this paper, we always let $\{\mu^x_t\}_{t\in\RR}$ be an evolution system of measures of the semigroup $\{P^x_{s,t}\}_{t\geq s}$ given in Proposition \ref{P:2.6}.
\end{remark}
 \begin{remark}
\begin{itemize}
\item[{\rm(i)}] Under Assumptions {\rm\ref{B1}} and {\rm\ref{A2}}, the coefficients of the SDE (\ref{Equation}) satisfy the one-sided Lipschitz condition and the globally linear growth condition. Therefore, there exists a unique solution $\{(X^{\varepsilon}_t,Y^{\varepsilon}_t)\}_{t\ge0}$ to the SDE (\ref{Equation}) for any $\varepsilon>0$, $x\in\RR^{n}$ and $y\in \RR^{m}$.
\item[{\rm(ii)}] The first condition in \eref{A22} corresponds to \eqref{CenPol} as we will take $H(t,x,y)=b(t,x,y)-\bar{b}(t,x)$, where $\bar{b}(t,x)$ is defined by \eqref{barb}.
On the other hand, the second condition in \eref{A22}, i.e., $\sigma$ is bounded, which will be used in the proof of our first main result Theorem \ref{main result 1} below;  however, such kind requirement can be weaken if the solution $\{(X^{\varepsilon}_t,Y^{\varepsilon}_t)\}_{t\ge0}$ has higher finite moment.
    \end{itemize}
\end{remark}

We now establish some uniform estimates of the solution $\{(X^{\varepsilon}_t, Y^\varepsilon_t)\}_{t\ge0}$.

\begin{lemma} \label{PMY}
Suppose that Assumptions {\rm\ref{B1}} and {\rm\ref{A2}} hold. Then, for any $T>0$, there exists a constant $C_{T}>0$ such that
\begin{equation}
\sup_{\varepsilon>0}\mathbb{E} \Big(\sup_{t\in [0, T]}|X_{t}^{\varepsilon}|^4 \Big)\leq C_{T}(1+|x|^4+|y|^4) \label{X}
\end{equation} and
\begin{equation}
\sup_{\varepsilon>0}\sup_{t\in [0,T]}\mathbb{E}|Y_{t}^{\varepsilon}|^4\leq C_{T}(1+|x|^4+|y|^4).\label{Y}
\end{equation}
\end{lemma}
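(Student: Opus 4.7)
The plan is to prove the two bounds jointly via a coupling argument: first I would control $\EE|Y^\varepsilon_t|^4$ in terms of $\EE\sup_{s\leq t}|X^\varepsilon_s|^4$, and then close the loop on the slow component by Gr\"onwall. The starting point is It\^o's formula applied to $|Y^\varepsilon_t|^4$; using the dissipativity estimate \eqref{RE2} (applied with an exponent $\beta\in(\gamma,1)$ in place of $\gamma$) together with Young's inequality to split the cross term as $\eta|Y^\varepsilon_t|^4+C_\eta(1+|X^\varepsilon_t|^4)$ with $\eta$ small enough, I would obtain
$$\frac{d}{dt}\EE|Y^\varepsilon_t|^4\leq -4\gamma\varepsilon^{-1}\alpha(t/\varepsilon)\,\EE|Y^\varepsilon_t|^4+C\varepsilon^{-1}\alpha(t/\varepsilon)\bigl(1+\EE|X^\varepsilon_t|^4\bigr).$$
Combining the differential form of Gr\"onwall's inequality with the elementary change-of-variable identity
$$\int_0^t e^{-4\gamma\varepsilon^{-1}\int_s^t\alpha(u/\varepsilon)\,du}\,\varepsilon^{-1}\alpha(s/\varepsilon)\,ds=\frac{1-e^{-4\gamma\varepsilon^{-1}\int_0^t\alpha(u/\varepsilon)\,du}}{4\gamma}\leq \frac{1}{4\gamma},$$
which is valid uniformly in $\varepsilon>0$ and $t\ge0$, then yields the decoupling estimate
$$\sup_{t\in[0,T]}\EE|Y^\varepsilon_t|^4\leq |y|^4+C\Big(1+\EE\sup_{s\in[0,T]}|X^\varepsilon_s|^4\Big).$$

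For the slow component, I would use the fact that $b$ is Lipschitz (hence of linear growth) and $\sigma$ is uniformly bounded by \eqref{A22}. H\"older's inequality for the drift integral and the Burkholder-Davis-Gundy inequality for the It\^o integral then give
$$\EE\sup_{s\in[0,t]}|X^\varepsilon_s|^4\leq C_T\bigg(|x|^4+1+\int_0^t\EE\sup_{r\in[0,s]}|X^\varepsilon_r|^4\,ds+\int_0^t\EE|Y^\varepsilon_s|^4\,ds\bigg).$$
Substituting the decoupling inequality into the last term and invoking the integral form of Gr\"onwall's inequality delivers \eqref{X}; inserting \eqref{X} back into the decoupling inequality then yields \eqref{Y}.

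The main technical obstacle is the uniform control of moments of $Y^\varepsilon$ in the presence of both the fast time scale $\varepsilon^{-1}$ and the possibly unbounded, time-varying rate $\alpha$. This is resolved by the crucial matching: the dissipativity coefficient and the forcing term above both carry the same prefactor $\varepsilon^{-1}\alpha(t/\varepsilon)$, so that the change-of-variable identity absorbs both scales simultaneously into a bound independent of $\varepsilon$. Once this decoupling estimate is in hand, the remainder of the proof reduces to standard Gr\"onwall-type arguments.
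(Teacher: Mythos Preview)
Your proposal is correct and follows essentially the same route as the paper: apply It\^o's formula to $|Y^\varepsilon_t|^4$ together with the dissipativity bound \eqref{RE2} to obtain the differential inequality for $\EE|Y^\varepsilon_t|^4$, integrate it against the exponential weight so that the prefactor $\varepsilon^{-1}\alpha(\cdot/\varepsilon)$ cancels uniformly in $\varepsilon$, and then close the estimate on $X^\varepsilon$ via linear growth of $b$, boundedness of $\sigma$, BDG, and Gr\"onwall. The only cosmetic difference is that the paper bounds $\EE|Y^\varepsilon_t|^4$ by $\sup_{s\le t}\EE|X^\varepsilon_s|^4$ rather than $\EE\sup_{s\le t}|X^\varepsilon_s|^4$, but either version feeds into the final Gr\"onwall step in the same way.
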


\begin{proof}
By It\^{o}'s formula and \eref{RE2},
\begin{align*}
\frac{d}{dt}\mathbb{E} |Y_t^{\varepsilon} |^{4}=&\frac 4{\varepsilon}\mathbb{E}(|Y_t^{\varepsilon }|^{2} \langle Y_t^{\varepsilon },f(t/\varepsilon,X_t^{\varepsilon },Y_t^{\varepsilon }) \rangle)\\
&+\frac 2{\varepsilon}\mathbb{E}(|Y_t^{\varepsilon }|^{2}\|g(t/\varepsilon,X_t^{\varepsilon },Y_t^{\varepsilon }) \|^{2})+\frac{4}{\varepsilon}\mathbb{E} (|g^{*}(t/\varepsilon,X_t^{\varepsilon },Y_t^{\varepsilon })Y^{\varepsilon}_t|^2)\\
\leq & \frac{4}{\varepsilon}\mathbb{E}(|Y_t^{\varepsilon }|^{2}  \langle Y_{t}^{\varepsilon },f(t/\varepsilon,X_{t}^{\varepsilon },Y_{t}^{\varepsilon })  \rangle)+\frac{6}{\varepsilon}\mathbb{E}(|Y_t^{\varepsilon }|^{2} \|g(t/\varepsilon,X_{t}^{\varepsilon },Y_{t}^{\varepsilon })  \|^{2})  \\
\leq &   -\frac{4\gamma\alpha(t/\varepsilon) }{\varepsilon}\mathbb{E} |Y_{t}^{\varepsilon } |^4+\frac{C\alpha(t/\varepsilon)}{\varepsilon} (1+\EE|X_{t}^{\varepsilon }|^4 ).
\end{align*}
This along with the Grownall inequality and \eqref{A12} implies
\begin{align*}
\mathbb{E} |Y_t^{\varepsilon } |^{4}\leq &e^{-\frac{4\gamma}{\varepsilon}\int^t_0 \alpha(u/\varepsilon)\,du}|y|^4+\frac{C}{\varepsilon}\int^t_0 e^{- \frac{4\gamma}{\varepsilon}\int^t_u\alpha(r/\varepsilon)\,dr }\alpha(u/\varepsilon) (1+\EE|X_{u}^{\varepsilon }|^4 )\,du \\
\leq &e^{-4\gamma\int^{t/\varepsilon}_0 \alpha(u)\,du}|y|^4+C\sup_{0\leq s\leq t} (1+\EE|X_{s}^{\varepsilon }|^4 ).
\end{align*}

On the other hand, by \eqref{A21} and \eqref{A22}, for $t\in [0, T]$,
\begin{align*}
\mathbb{E} \Big(\sup_{s\in [0,t]}|X^{\varepsilon}_s|^4 \Big)\leq &C_{T}(1+|x|^4)+C_{T}\int^{t}_{0}\EE|X^{\varepsilon}_s|^4\,ds+C_{T}\int^{t}_{0}\EE|Y^{\varepsilon}_s|^4\, ds\\
\leq &C_{T}(1+|x|^4+|y|^4)+C_{T}\int^{t}_{0}\mathbb{E} \Big(\sup_{u\in [0,s]}|X^{\varepsilon}_u|^4 \Big)\,ds.
\end{align*}
Using the Grownall inequality again, we find that
$$
\mathbb{E} \Big(\sup_{t\in [0,T]}|X^{\varepsilon}_t|^4\Big )
\leq C_{T} (1+|x|^4+|y|^4),
$$
which implies \eref{X}. As a consequence, \eref{Y} holds obviously.
\end{proof}

\subsection{Strong averaging principle: general case }
In this subsection, we focus on the strong averaging principle of the stochastic system \eref{Equation}. A counterexample in \cite[Section 4.1]{L2010} shows that the strong averaging principle may fail if the diffusion coefficient $\sigma(x,y)$ depends on $y$. Hence, in this part we always assume that $\sigma(x,y)\equiv\sigma(x)$, i.e., $\sigma(x,y)$ is independent of $y$. Then, we consider the following averaged equation:
\begin{equation}
d\bar{X}^{\varepsilon}_{t}=\bar{b}(t/\varepsilon,\bar{X}^{\varepsilon}_t)\,dt+\sigma(\bar{X}^{\varepsilon}_t)\,d W^1_t,\quad\bar{X}^{\varepsilon}_{0}=x, \label{AVE}
\end{equation}
where
\begin{align}
\bar{b}(t,x)=\displaystyle\int_{\RR^{m}}b(x,y)\,\mu^x_t(dy),\quad t\geq 0.\label{barb}
\end{align}

\begin{lemma} \label{PMA} Suppose that Assumptions {\rm\ref{B1}} and {\rm\ref{A2}} hold, and that $\sigma(x,y)=\sigma(x)$ for all $x\in \RR^n$ and $y\in \RR^m$. Then, for any $x\in\RR^{n}$ and $\varepsilon>0$, the time-inhomogeneous SDE \eqref{AVE}  has a unique solution, denoted by $\{\bar{X}^{\varepsilon}_t\}_{t\ge0}$. Moreover, for $T>0$, there exists a constant $C_{T}>0$ such that
\begin{equation}
\sup_{\varepsilon>0}\mathbb{E}\Big(\sup_{t\in [0, T]}|\bar{X}^{\varepsilon}_{t}|^{2}\Big)\leq C_{T}(1+|x|^{2}).\label{EAVE}
\end{equation}
\end{lemma}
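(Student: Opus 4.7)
The plan is to reduce the lemma to verifying that the coefficients of \eqref{AVE} satisfy classical Lipschitz and linear growth conditions uniformly in $t\geq 0$, after which the existence and uniqueness follow from standard SDE theory and the moment estimate follows from It\^o's formula plus Gr\"onwall. Throughout, I would use the representation $\bar{b}(t,x)=\EE b(x,\eta^x_t)$, where $\{\eta^x_t\}_{t\in\RR}$ is the random variable constructed in Lemma \ref{L:2.4} whose law is $\mu^x_t$.

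The main technical step is to show that $\bar{b}(t,\cdot)$ is Lipschitz uniformly in $t\geq 0$. For $x_1,x_2\in\RR^n$, by the Lipschitz condition \eqref{A21} on $b$,
$$|\bar{b}(t,x_1)-\bar{b}(t,x_2)|\leq C|x_1-x_2|+C\EE|\eta^{x_1}_t-\eta^{x_2}_t|.$$
To control the second term, I would take any $y\in\RR^m$ and pass to the limit $s\to-\infty$ in \eqref{FR1} using the $L^2$ convergence $Y^{s,x_i,y}_t\to\eta^{x_i}_t$ from \eqref{F3.2}; this yields $\EE|\eta^{x_1}_t-\eta^{x_2}_t|^4\leq C|x_1-x_2|^4$, hence the uniform Lipschitz bound $|\bar{b}(t,x_1)-\bar{b}(t,x_2)|\leq C|x_1-x_2|$. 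Combined with $|\bar b(t,0)|\leq C(1+\EE|\eta^0_t|)\leq C$, which follows from \eqref{A22} and \eqref{F3.2}, this gives the uniform linear growth bound $|\bar{b}(t,x)|\leq C(1+|x|)$. The diffusion coefficient $\sigma$ is already Lipschitz and bounded by Assumption \ref{A2}, so both coefficients of \eqref{AVE} are globally Lipschitz in $x$ with constants independent of $t$ and $\varepsilon$. This is the only non-routine ingredient; the rest of the proof is standard.

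Given these properties, classical SDE theory (see e.g.\ the analogue of the argument used for \eqref{Equation} in Remark after Assumption \ref{A2}) yields the existence of a unique strong solution $\{\bar{X}^{\varepsilon}_t\}_{t\ge 0}$ to \eqref{AVE}. For the moment bound, I would apply It\^o's formula to $|\bar{X}^{\varepsilon}_t|^2$, use linear growth of $\bar{b}$ and boundedness of $\sigma$ to estimate the drift and quadratic variation terms by $C(1+|\bar{X}^{\varepsilon}_s|^2)$, apply the Burkholder--Davis--Gundy inequality to the martingale part, and conclude via Gr\"onwall's lemma that
$$\EE\Big(\sup_{t\in[0,T]}|\bar{X}^{\varepsilon}_t|^2\Big)\leq C_T(1+|x|^2),$$
uniformly in $\varepsilon>0$. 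The only obstacle worth emphasizing is the uniform-in-$t$ Lipschitz property of $\bar{b}$, which is handled by leveraging the contractive estimate \eqref{FR1} passed to the limiting evolution system of measures.
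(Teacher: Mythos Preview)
Your proposal is correct and follows essentially the same approach as the paper: establish that $\bar b(t,\cdot)$ is Lipschitz and of linear growth uniformly in $t$, then invoke standard SDE theory for well-posedness and the moment bound. The only cosmetic difference is in deriving the Lipschitz property: the paper inserts the intermediaries $\EE b(x_i,Y^{s,x_i,0}_t)$ and uses the ergodic estimate \eqref{WErodicity} together with \eqref{FR1} before letting $s\to-\infty$, whereas you work directly with the representation $\bar b(t,x)=\EE b(x,\eta^x_t)$ and pass to the limit in \eqref{FR1} (via Fatou along an a.s.\ convergent subsequence, since \eqref{F3.2} only gives $L^2$ convergence). Both routes are valid and equally short; your version is arguably slightly more direct since it avoids invoking \eqref{WErodicity}. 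One minor citation slip: the linear growth of $b$ at $(0,y)$ comes from \eqref{A21}, not \eqref{A22}.
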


\begin{proof}
According to \eref{WErodicity}, \eref{A21} and \eref{FR1}, for any $t\geq 0$, $t\ge s$ and $x_1,x_2\in\RR^n$,
\begin{align*}
 |\bar b(t,x_{1})-\bar b(t,x_{2}) |
= & \left|\int_{\RR^m}b(x_1,y)\mu^{x_1}_t(dy)-\int_{\RR^m}b(x_2,y)\mu^{x_2}_t(dy) \right|\\
\leq &  \left|\int_{\RR^m}b(x_1,y)\mu^{x_1}_t(dy)-\EE b(x_{1},Y^{s,x_1,0}_t) \right| + \left|\int_{\RR^m}b(x_2,y)\mu^{x_2}_t(dy)-\EE b(x_{2},Y^{s,x_2,0}_t) \right|\\
& + |\EE b(x_{1},Y^{s,x_1,0}_t)-\EE b(x_{2},Y^{s,x_2,0}_t) |\\
\le &C(1+|x_1|+|x_2|)e^{-\int^t_s  \alpha(u)\,du}+C|x_1-x_2| +C\EE |Y^{s,x_1,0}_t-Y^{s,x_2,0}_t |\\
\le &C(1+|x_1|+|x_2|)e^{-\int^t_s  \alpha(u)\,du}+C|x_1-x_2|.
\end{align*}
Letting $s\to -\infty$ in the inequality above and using \eqref{A12},
\begin{equation}
 |\bar b(t,x_{1}) - \bar b(t,x_{2}) |\le C|x_{1} - x_{2}|. \label{barc1}
\end{equation}

On the other hand, by \eqref{F3.2}, there exists $C>0$ such that for any $t\geq 0$,
\begin{align}
|\bar b(t,x_{1})|\leq \EE|b(x,\eta^x_t)|\leq C(1+|x|+\EE|\eta^x_t|)\leq C(1+|x|).\label{linear barb}
\end{align}

Combining \eqref{linear barb} and \eqref{barc1} with \eqref{A21}, we can easily see that the SDE \eref{AVE} admits a unique solution, and \eref{EAVE} holds obviously.
\end{proof}

The following is the first main result of our paper.

\begin{theorem}\label{main result 1}
Suppose that Assumptions {\rm\ref{B1}} and {\rm\ref{A2}} hold, and $\sigma(x,y)=\sigma(x)$ for all $x\in \RR^n$ and $y\in \RR^m$. Let $\{(X_t^\varepsilon, Y_t^\varepsilon )\}_{t\ge0}$ and $\{\bar X_t^\varepsilon\}_{t\ge0}$ be the solutions to \eqref{Equation} and \eqref{AVE} respectively. Then for $T>0$, there exists $C_T>0$ such that for $(x,y)\in\RR^{n}\times\RR^{m}$ and $\varepsilon>0$,
 \begin{equation}\label{R1}\begin{split}
\sup_{t\in [0, T]}\EE |X_{t}^{\varepsilon}  -  \bar{X}^{\varepsilon}_{t} |^2  \leq    C_T (1 + |x|^4 + |y|^4 )\varepsilon^{2}\left[\sup_{0\leq t\leq T}|\Lambda_{\gamma}(t/\varepsilon)|^2  +  \int^{T/\varepsilon}_0     \alpha(s)\Lambda^2(s)\,ds\right].
\end{split}
\end{equation}
\end{theorem}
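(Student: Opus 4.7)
My plan is to invoke the nonautonomous Poisson equation machinery of Section 2 with the auxiliary function $H(s,x,y):=b(x,y)-\bar{b}(s,x)$. This choice satisfies the centering condition \eqref{CenCon} by the definition of $\bar{b}$, and fulfills the derivative bound \eqref{CenPol} since $\partial_y^j H=\partial_y^j b$ for $j\ge 1$, while the $x$-derivatives of $\bar{b}$ inherit boundedness from those of $b$ by differentiating under the integral against $\mu_s^x$ (using the regularity of $\eta^x_t$ established in Lemma \ref{L:2.4}). Proposition \ref{P3.6} then supplies a function $\Phi(s,x,y)$ solving $\partial_s\Phi+\mathscr{L}^x_2(s)\Phi=-H$ together with the regularity estimates \eqref{E1}.

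The first step is to decompose
\begin{align*}
X^\varepsilon_t-\bar{X}^\varepsilon_t
&=\int_0^t H(s/\varepsilon,X^\varepsilon_s,Y^\varepsilon_s)\,ds\\
&\quad+\int_0^t[\bar{b}(s/\varepsilon,X^\varepsilon_s)-\bar{b}(s/\varepsilon,\bar{X}^\varepsilon_s)]\,ds
+\int_0^t[\sigma(X^\varepsilon_s)-\sigma(\bar{X}^\varepsilon_s)]\,dW^1_s.
\end{align*}
After squaring and taking expectation, the last two summands each contribute $C_T\int_0^t\EE|X^\varepsilon_s-\bar{X}^\varepsilon_s|^2\,ds$, thanks to the Lipschitz property of $\bar{b}(s,\cdot)$ (proved in Lemma \ref{PMA}) and of $\sigma$; Gronwall's inequality will therefore close the bound once I control the first summand in $L^2$.

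For the first summand I would apply It\^o's formula to $\varepsilon\Phi(t/\varepsilon,X^\varepsilon_t,Y^\varepsilon_t)$. The Poisson identity $\partial_s\Phi+\mathscr{L}^x_2(s)\Phi=-H$ makes the contributions from $\varepsilon^{-1}\partial_s\Phi$, $\varepsilon^{-1}\partial_y\Phi\cdot f$ and $\tfrac{1}{2\varepsilon}\mathrm{tr}(gg^*\partial_y^2\Phi)$ collapse into $-\varepsilon^{-1}H(t/\varepsilon,X^\varepsilon_t,Y^\varepsilon_t)\,dt$. Rearranging yields
\begin{align*}
\int_0^t H(s/\varepsilon,X^\varepsilon_s,Y^\varepsilon_s)\,ds
&=\varepsilon\Phi(0,x,y)-\varepsilon\Phi(t/\varepsilon,X^\varepsilon_t,Y^\varepsilon_t)\\
&\quad+\varepsilon\int_0^t\bigl(\partial_x\Phi\cdot b+\tfrac12\mathrm{tr}[\sigma\sigma^*\partial_x^2\Phi]\bigr)(s/\varepsilon,X^\varepsilon_s,Y^\varepsilon_s)\,ds\\
&\quad+\varepsilon\int_0^t\partial_x\Phi\cdot\sigma(X^\varepsilon_s)\,dW^1_s
+\sqrt{\varepsilon}\int_0^t\partial_y\Phi\cdot g(s/\varepsilon,X^\varepsilon_s,Y^\varepsilon_s)\,dW^2_s,
\end{align*}
with no cross-variation term since $W^1$ and $W^2$ are independent.

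Using \eqref{E1}, the linear growth of $b$, the boundedness of $\sigma$, the moment bounds \eqref{X}--\eqref{Y} and Cauchy--Schwarz, the boundary term, the $\partial_x\Phi$-drift term and the $dW^1$ stochastic integral are all controlled by $C_T(1+|x|^4+|y|^4)\varepsilon^2\sup_{0\le t\le T}\Lambda_\gamma^2(t/\varepsilon)$, where the quartic growth in $(|x|,|y|)$ is forced by the product $\|\partial_x\Phi\|\cdot|b|$ inside the drift correction (and by $\Lambda\le\Lambda_\gamma$ for the boundary term, since $\gamma<1$). For the $dW^2$ martingale, It\^o's isometry together with $\|\partial_y\Phi\|\le C\Lambda$ and $\|g\|^2\le C\alpha(1+|x|^2+|y|^2)$ gives
$$\varepsilon\,\EE\int_0^t\|\partial_y\Phi\|^2\|g\|^2\,ds\le C\varepsilon\int_0^t\alpha(s/\varepsilon)\Lambda^2(s/\varepsilon)(1+\EE|X^\varepsilon_s|^2+\EE|Y^\varepsilon_s|^2)\,ds,$$
which under the substitution $u=s/\varepsilon$ becomes $C_T\varepsilon^2(1+|x|^2+|y|^2)\int_0^{T/\varepsilon}\alpha(u)\Lambda^2(u)\,du$. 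Collecting all the pieces and invoking Gronwall will finish the proof. The main delicacy is the bookkeeping required to separate the $\sup_{t\le T}\Lambda_\gamma^2(t/\varepsilon)$ factor (arising from the boundary and $\partial_x\Phi$-drift terms via the $x$-regularity of $\Phi$) from the $\int_0^{T/\varepsilon}\alpha\Lambda^2$ factor (arising from the diffusive fluctuation of the fast component via $dW^2$), and to track why Cauchy--Schwarz on $\partial_x\Phi\cdot b$ necessitates the fourth-moment prefactor $(1+|x|^4+|y|^4)$.
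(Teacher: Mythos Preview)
Your proposal is correct and follows essentially the same route as the paper's proof: the same decomposition of $X^\varepsilon_t-\bar X^\varepsilon_t$, the same choice $H(s,x,y)=b(x,y)-\bar b(s,x)$ in the nonautonomous Poisson equation, the same It\^o expansion of $\Phi(t/\varepsilon,X^\varepsilon_t,Y^\varepsilon_t)$, and the same term-by-term estimation using \eqref{E1}, \eqref{A11}, \eqref{A22} and Lemma~\ref{PMY} before closing with Gronwall. Your bookkeeping of which terms produce $\sup_{t\le T}\Lambda_\gamma^2(t/\varepsilon)$ versus $\int_0^{T/\varepsilon}\alpha\Lambda^2$, and of why $(1+|x|^4+|y|^4)$ arises from the product $\|\partial_x\Phi\|\cdot|b|$, matches the paper exactly.
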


\begin{proof}
Note that, by \eqref{Equation} and \eqref{AVE},
\begin{align*}
X_{t}^{\varepsilon}-\bar{X}^{\varepsilon}_{t}
= &\int_{0}^{t}(b(X_{s}^{\varepsilon},Y_{s}^{\varepsilon})-\bar{b}(s/\varepsilon,X^{\varepsilon}_{s}))\,ds
+\int_{0}^{t} (\bar{b}(s/\varepsilon,X^{\varepsilon}_{s})-\bar{b}(s/\varepsilon,\bar{X}^{\varepsilon}_{s}))\,ds\\
 &+\int_{0}^{t}(\sigma(X_{s}^{\varepsilon})-\sigma(\bar{X}^{\varepsilon}_{s}) )\,dW^1_s.
\end{align*}
Hence, according to \eqref{barc1} and \eref{A21},  for any $t\in [0,T]$,
\begin{align*}
\sup_{0\leq s\leq t}\mathbb{E} |X^{\varepsilon}_s-\bar{X}^{\varepsilon}_s |^2
 \leq   C_T\sup_{0\leq s\leq t}\mathbb{E}\left|\int^s_0  (b(X^{\varepsilon}_{r},Y^{\varepsilon}_{r})  -  \bar{b}(r/\varepsilon,X^{\varepsilon}_{r}) ) \,dr\right|^2 + C_T\int_{0}^{t}\mathbb{E}|X_s^\varepsilon-\bar{X}^{\varepsilon}_s|^{2}\,ds.
\end{align*}
Consequently,
$$
\sup_{0\leq t\leq T}\mathbb{E}|X^{\varepsilon}_t-\bar{X}^{\varepsilon}_t|^2
\leq C_T\sup_{0\leq t\leq T}\mathbb{E}\left|\int^t_0 (b(X^{\varepsilon}_{s},Y^{\varepsilon}_{s})-\bar{b}(s/\varepsilon,X^{\varepsilon}_{s})) \,ds\right|^2.
$$

Denote by $H(s,x,y)=b(x,y)-\bar{b}(s,x)$. By the definition of $\bar{b}(s,x)$ and \eref{A22}, it is easy to check that $H(s,x,y)$ satisfies \eref{CenCon} and \eref{CenPol} for any $s\geq 0$. According to Proposition \ref{P3.6}, there exists a solution $\Phi(s,x,y)$ satisfying the following equation
$$
\partial_s\Phi(s,x,y)+\mathscr{L}^{x}_2(s)\Phi(s,x,\cdot)(y)=-H(s,x,y),\quad s\geq 0, x\in \RR^{n},y\in\RR^{m},
$$
and \eref{E1} holds for any $s\geq 0$, where $\mathscr{L}^x_{2}(s)$ is defined by \eref{L_2}. Thus,
\begin{equation}\begin{split}
\sup_{0\leq t\leq T}\mathbb{E} |X^{\varepsilon}_t-\bar{X}^{\varepsilon}_t |^2
 \leq C_T\sup_{0\leq t\leq T}\mathbb{E}\left|\int^t_0  (\partial_s\Phi(s/\varepsilon,X_{s}^{\varepsilon},Y^{\varepsilon}_s)
+  \mathscr{L}^{X_{s}^{\varepsilon}}_2(s/\varepsilon)\Phi(s/\varepsilon,X_{s}^{\varepsilon},\cdot)(Y^{\varepsilon}_{s}) ) \,ds\right|^2.\label{S5.61}\end{split}
\end{equation}

On the other hand, by It\^o's formula,
\begin{align*}
\Phi(t/\varepsilon,X_{t}^{\varepsilon},Y^{\varepsilon}_{t}) = &\Phi(0,x,y) + \varepsilon^{-1}\int^t_0(\partial_s\Phi(s/\varepsilon ,X_{s}^{\varepsilon},Y^{\varepsilon}_{s})
+ \mathscr{L}^{X_{s}^{\varepsilon}}_2(s/\varepsilon)\Phi(s/\varepsilon,X_{s}^{\varepsilon},\cdot)(Y^{\varepsilon}_{s}))\,ds\\
&+\int^t_0 \mathscr{L}^{Y^{\varepsilon}_{s}}_{1}\Phi(s/\varepsilon,\cdot, Y_{s}^{\varepsilon})(X^{\varepsilon}_{s})\,ds +M^{\varepsilon,1}_{t}+M^{\varepsilon,2}_{t},
\end{align*}
where
\begin{equation}\label{e:addop}
\mathscr{L}^{y}_{1}\phi(x):=\langle b(x,y), \nabla \phi(x)\rangle+\frac12\text{Tr} [(\sigma\sigma)^{\ast}(x)\nabla^2\phi(x) ], \quad \phi\in C^2(\RR^m),
\end{equation} and
$M^{\varepsilon,1}_{t}$ and $M^{\varepsilon,2}_{t}$ are local martingales defined respectively by
$$M^{\varepsilon,1}_{t}:=\int^t_0 \partial_x\Phi(s/\varepsilon,X_{s}^{\varepsilon}, Y^{\varepsilon}_{s})\cdot \sigma(X^{\varepsilon}_s)\,dW^1_s,$$
$$M^{\varepsilon,2}_{t}:=\varepsilon^{-1/2}\int^t_0 \partial_y\Phi(s/\varepsilon,X_{s}^{\varepsilon}, Y^{\varepsilon}_{s})\cdot g(s/\varepsilon,X^{\varepsilon}_s,Y^{\varepsilon}_s)\,dW^2_s.$$
Consequently,
\begin{align*}
& -\int^t_0 (\partial_s\Phi(s/\varepsilon,X_{s}^{\varepsilon},Y^{\varepsilon}_{s}) + \mathscr{L}^{X_{s}^{\varepsilon}}_2(s/\varepsilon)
\Phi(s/\varepsilon,X_{s}^{\varepsilon},\cdot)(Y^{\varepsilon}_{s})) \,ds\\
&= \varepsilon(\Phi(0,x,y) - \Phi(t/\varepsilon,X_{t}^{\varepsilon},Y^{\varepsilon}_{t}))+\varepsilon\int^t_0 \mathscr{L}^{Y^{\varepsilon}_{s}}_{1} \Phi(s/\varepsilon, \cdot,Y_{s}^{\varepsilon})(X^{\varepsilon}_{s})\,ds  + \varepsilon M^{\varepsilon,1}_{t} + \varepsilon M^{\varepsilon,2}_{t}.
\end{align*}

Combining this with \eref{S5.61}, we obtain that
\begin{align*}
&\sup_{t\in [0, T]}\EE |X_{t}^{\varepsilon}-\bar{X}^{\varepsilon}_{t} |^2\\
&\leq C_{T} \varepsilon^{2}\sup_{t\in[0,T]}\EE  |\Phi(0,x,y)-\Phi(t/\varepsilon,X_{t}^{\varepsilon},Y^{\varepsilon}_{t})  |^2
+  C_{T}\varepsilon^2\EE\left(\int^T_0  |\mathscr{L}^{Y^{\varepsilon}_{s}}_{1}(s/\varepsilon)\Phi(s/\varepsilon,\cdot, Y_{s}^{\varepsilon})(X^{\varepsilon}_{s}) |^2\, ds \right)\\
&\quad  +\varepsilon^2 \sup_{t\in[0,T]}\EE |M^{\varepsilon,1}_{t} |^2+\varepsilon^2\sup_{t\in[0,T]}\EE |M^{\varepsilon,2}_{t} |^2 \\
&=: \sum^4_{i=1}I^{\varepsilon}_i(T).
\end{align*}

It follows from \eqref{E1} and Lemma \ref{PMY} as well as $\Lambda(0)\le \sup_{0\le t\le T} \Lambda(t/\varepsilon)$ that
$$
I^{\varepsilon}_1(T)\leq C_T\varepsilon^{2} (1+|x|^{2}+|y|^{2} )\sup_{0\leq t\leq T}|\Lambda(t/\varepsilon)|^2.$$
According to \eqref{E1}, Assumption \ref{A2} and Lemma \ref{PMY}, we have
\begin{align*}
I^{\varepsilon}_2(T)\leq &C_{T}\varepsilon^2\int^T_0 \EE\left|\partial_x \Phi(s/\varepsilon,X_{s}^{\varepsilon},Y^{\varepsilon}_{s})\cdot b(X^{\varepsilon}_s, Y^{\varepsilon}_s) + \frac12\text{Tr} [(\sigma\sigma^{\ast})(X_{s}^{\varepsilon})\partial^2_x \Phi(s/\varepsilon,X_{s}^{\varepsilon},Y^{\varepsilon}_{s}) ]\right|^2\,ds\\
\leq &C_{T}\varepsilon^{2} (1+|x|^{4}+|y|^{4} )\int^T_0 \Lambda^2_{\gamma}(s/\varepsilon)\,ds\\
\leq &C_{T}\varepsilon^{2} (1+|x|^{4}+|y|^{4} )\sup_{0\leq t\leq T}|\Lambda_{\gamma}(t/\varepsilon)|^2,
\end{align*}
where the boundedness of $\sigma$ is used in the second inequality.
Lemma \ref{PMY} together with \eqref{A22} and \eqref{E1} yield that
\begin{align*}
I^{\varepsilon}_3(T)\leq & C_{T}\varepsilon^{2} \int^T_0 \Lambda^2_{\gamma}(s/\varepsilon)\EE (1 + |X_{s}^{\varepsilon}|^2  +  |Y^{\varepsilon}_{s}|^2 )\,ds
\leq C_{T}\varepsilon^{2} (1 + |x|^{2}+|y|^{2} )\sup_{0\leq t\leq T}|\Lambda_{\gamma}(t/\varepsilon)|^2.
\end{align*}
Furthermore, by \eqref{E1}, \eqref{A11} and Lemma \ref{PMY}, it is easy to see that
\begin{align*}
I^{\varepsilon}_4(T)\leq & C_{T}\varepsilon\int^{T}_0 \alpha(s/\varepsilon)\Lambda^2(s/\varepsilon)\EE(1+|X_{s}^{\varepsilon}|^2+|Y^{\varepsilon}_{s}|^2)\,ds\\
\leq &C_{T}\varepsilon^{2} (1+|x|^{2}+|y|^{2} )\int^{T/\varepsilon}_0 \alpha(s)\Lambda^2(s)\,ds.
\end{align*}
Hence, the desired assertion follows by putting all the estimates above together.
\end{proof}

In the following, we make some remarks on convergence rates in Theorem \ref{main result 1}.

\begin{lemma}\label{RE3.5}
Suppose that  $\alpha(\cdot)$ is differentiable on $(0,\infty)$ such that
\begin{equation}\label{e:remarkadd}
\lim_{t\to +\infty}\alpha(t)\in [0,\infty],\quad
 \lim_{t\rightarrow +\infty} \frac  {e^{-\gamma\int^t_0\alpha(u)\,du}}{\alpha(t)}=0, \quad \lim_{t\rightarrow +\infty}[1/\alpha(t)]'=0.
\end{equation}
Then for any $T>0$, there are constants $\varepsilon_0, C>0$ so that
 for any $\varepsilon\in (0,\varepsilon_0]$,
$$
\sup_{t\in [0,T]}|\Lambda_{\gamma}(t/\varepsilon)|^2\leq   C\int^{T/\varepsilon}_0 \alpha(s)\Lambda^2(s)\,ds.
$$
\end{lemma}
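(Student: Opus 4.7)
The plan is to extract the sharp asymptotic $\Lambda(s),\Lambda_\gamma(s)\sim{\rm const}/\alpha(s)$ as $s\to\infty$, and then reduce the claimed inequality, up to $\gamma$-dependent multiplicative constants, to the purely analytic bound
\[
\sup_{u\in[R_0,R]}h(u)^2\le C\int_{R_0}^R h(s)\,ds,\qquad h:=1/\alpha,
\]
for all sufficiently large $R=T/\varepsilon$.

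For the asymptotic, I would integrate by parts in the definition of $\Lambda$, using $e^{-\int_s^r\alpha}=-\frac{1}{\alpha(r)}\frac{d}{dr}e^{-\int_s^r\alpha(u)du}$, to obtain
\[
\Lambda(s)=\frac{1}{\alpha(s)}+\int_s^\infty [1/\alpha(r)]'\,e^{-\int_s^r\alpha(u)du}\,dr,
\]
where the boundary contribution at $r=\infty$ vanishes by the second hypothesis of \eqref{e:remarkadd} (since $e^{-\int}\le e^{-\gamma\int}$). Because $[1/\alpha(r)]'\to0$, for any $\eta\in(0,1)$ there is $R_0$ with $|[1/\alpha(r)]'|\le\eta$ on $[R_0,\infty)$, whence $(1-\eta)\Lambda(s)\le 1/\alpha(s)\le(1+\eta)\Lambda(s)$ for $s\ge R_0$. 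The identical argument applied with $\alpha$ replaced by $\gamma\alpha$ (enlarging $R_0$ if necessary) delivers the analogous two-sided estimate for $\Lambda_\gamma$. One thus finds constants $c_0,c_0'>0$ depending only on $\gamma$ such that $\Lambda_\gamma(s)^2\le c_0 h(s)^2$ and $\alpha(s)\Lambda(s)^2\ge c_0' h(s)$ for $s\ge R_0$. Splitting $\sup_{u\in[0,R]}\Lambda_\gamma(u)^2$ into the fixed quantity $\sup_{[0,R_0]}\Lambda_\gamma^2$ (finite by continuity) plus $c_0\sup_{[R_0,R]}h^2$, and bounding $\int_0^R\alpha\Lambda^2\ge c_0'\int_{R_0}^R h$ from below, the problem reduces to the displayed $h$-inequality; the constant contribution from $[0,R_0]$ is harmless because $h>0$ forces $\int_{R_0}^R h$ to be bounded below by a fixed positive constant once $R\ge R_0+1$.

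The main obstacle is establishing the $h$-inequality, and I would handle it by case analysis driven by the first hypothesis $\lim_{t\to\infty}\alpha(t)\in[0,\infty]$, which forces $h$ to have a limit $h_\infty\in[0,\infty]$. If $h_\infty<\infty$ then $h$ is bounded on $[R_0,\infty)$, so the left-hand side is bounded, and the right-hand side is either unbounded as $R\to\infty$ (when $h_\infty>0$) or at least bounded below by $\int_{R_0}^{R_0+1}h>0$ (when $h_\infty=0$); in either sub-case one obtains a uniform $C$. If $h_\infty=\infty$, let $u^\star=u^\star(R)$ be a maximizer of $h$ on $[R_0,R]$; since $h(u^\star)\ge h(R)\to\infty$, necessarily $u^\star\to\infty$. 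Combined with the linear growth bound $h(u^\star)\le h(R_0)+\eta(u^\star-R_0)$ coming from $|h'|\le\eta$, one obtains $u^\star-R_0\ge h(u^\star)/(2\eta)$ whenever $R$ exceeds a threshold $R_1$. Then on $[u^\star-h(u^\star)/(2\eta),u^\star]\subset[R_0,R]$ one has $h\ge h(u^\star)/2$, yielding
\[
\int_{R_0}^R h(s)\,ds\ge\frac{h(u^\star)}{2}\cdot\frac{h(u^\star)}{2\eta}=\frac{h(u^\star)^2}{4\eta},
\]
so $C=4\eta$ suffices. Setting $R=T/\varepsilon$ and choosing $\varepsilon_0$ small enough that $T/\varepsilon_0$ exceeds all the thresholds above completes the proof. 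The delicate point is precisely this last case: a priori the maximizer $u^\star$ could sit near either endpoint of $[R_0,R]$, but the hypotheses $\lim\alpha\in[0,\infty]$ and $[1/\alpha]'\to0$ together push $u^\star$ deep into the interior, providing the geometric room needed to integrate $h$ over a genuine left-neighborhood of $u^\star$.
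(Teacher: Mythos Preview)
Your argument is correct, but the route differs from the paper's. Both proofs begin by establishing the asymptotics $\alpha(s)\Lambda(s)\to 1$ and $\alpha(s)\Lambda_\gamma(s)\to 1/\gamma$: you obtain them from the explicit integration-by-parts identity $\Lambda(s)=1/\alpha(s)+\int_s^\infty[1/\alpha]'e^{-\int_s^r\alpha}\,dr$, whereas the paper writes $\alpha(t)\Lambda_\gamma(t)=\big(\int_t^\infty e^{-\gamma\int_0^r\alpha}\,dr\big)\big/\big(e^{-\gamma\int_0^t\alpha}/\alpha(t)\big)$ and applies L'H\^opital's rule directly, using the hypotheses $e^{-\gamma\int_0^t\alpha}/\alpha(t)\to 0$ and $[1/\alpha]'\to 0$ to identify the limit as $1/\gamma$. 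The main divergence is in the comparison step. The paper avoids your maximizer argument entirely: when $\lim_{t\to\infty}\alpha(t)=0$ it applies L'H\^opital once more to the ratio $\Lambda^2(t)/\int_0^t\alpha\Lambda^2$, computing $\Lambda'(t)=\alpha(t)\Lambda(t)-1$ and obtaining
\[
\lim_{t\to\infty}\frac{\Lambda^2(t)}{\int_0^t\alpha(s)\Lambda^2(s)\,ds}=\lim_{t\to\infty}\frac{2\Lambda(t)[\alpha(t)\Lambda(t)-1]}{\alpha(t)\Lambda^2(t)}=0,
\]
which immediately gives the \emph{pointwise} bound $\Lambda_\gamma^2(t)\le C_0\int_0^t\alpha\Lambda^2$ for $t\ge T_0$; taking the supremum over $t\in[0,T]$ is then trivial. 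Your reduction to $\sup_{[R_0,R]}h^2\le C\int_{R_0}^R h$ with $h=1/\alpha$, handled by the Lipschitz bound $|h'|\le\eta$ and the left-neighborhood estimate near the maximizer $u^\star$, is more hands-on but equally valid, and has the minor advantage of never invoking L'H\^opital. The paper's approach is shorter; yours makes the geometric mechanism (slow variation of $1/\alpha$) more explicit and yields an exact error term in the IBP formula that could be reused for non-asymptotic estimates.
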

\begin{proof} (i) According to \eref{A12}, $\Lambda_{\gamma}(0)=\int^{+\infty}_{0}e^{-\gamma\int^r_0\alpha(u)\,du}\,dr<\infty$. This along with \eqref{e:remarkadd} yields that
$$
\lim_{t\rightarrow +\infty}\alpha(t)\Lambda_{\gamma}(t)=\lim_{t\rightarrow +\infty}\frac{\int^{+\infty}_t e^{-\gamma\int^r_0\alpha(u)du}dr}{e^{-\gamma\int^t_0\alpha(u)du}/\alpha(t)}=\lim_{t\rightarrow +\infty}\frac{1}{\gamma-[1/\alpha(t)]'}=\frac{1}{\gamma}.
$$
Similarly, it holds that $\lim_{t\rightarrow +\infty}\alpha(t)\Lambda(t)=1$. In particular,
$$
\lim_{t\rightarrow +\infty}\frac{\Lambda_{\gamma}(t)}{\Lambda(t)}=\frac{1}{\gamma}$$ and $$\lim_{t\to +\infty}\Lambda_{\gamma}(t)=\gamma^{-1}\lim_{t\to +\infty}1/\alpha(t).$$

When $\lim_{t\to+\infty}\alpha(t)\in (0,\infty]$, it is clear that there are constants  $T_0, C_0>0$ such that for all $t\ge T_0$,
\begin{equation}\label{ConA34}\Lambda^2_{\gamma}(t)\leq C_0\int^{t}_0 \alpha(s)\Lambda^2(s) \,ds.\end{equation} When $\lim_{t\to+\infty}\alpha(t)=0$,
\begin{align*}
\lim_{t\to +\infty}\frac{\Lambda^2_{\gamma}(t)}{\int^t_0 \alpha(s)\Lambda^2(s)\,ds}  =  \frac{1}{\gamma^2}\lim_{t\to+\infty}\frac{\Lambda^2(t)}{\int^t_0 \alpha(s)\Lambda^2(s)\,ds}  =  \frac{1}{\gamma^2}\lim_{t\to+\infty}\frac{2\Lambda(t)[\Lambda(t)\alpha(t)-1]}{\alpha(t)\Lambda^2(t)}=0,
\end{align*}
and so \eref{ConA34} is also satisfied.

(ii) For any $T>0$, take $\varepsilon_0>0$ with $T/\varepsilon_0=T_0$. Then, by \eref{ConA34}, for any $\varepsilon\in (0,\varepsilon_0]$,
$$
\sup_{t\in [\varepsilon T/\varepsilon_0,T]}|\Lambda_{\gamma}(t/\varepsilon)|^2\leq C_0\int^{T/\varepsilon}_0 \alpha(s)\Lambda^2(s)\,ds.
$$
Note also that $\Lambda_{\gamma}(t)$ is continuous on $[0, T/\varepsilon_0]$, and so
$$
\sup_{t\in [0,\varepsilon T/\varepsilon_0]}|\Lambda_{\gamma}(t/\varepsilon)|^2=\sup_{t\in [0,T/\varepsilon_0]}|\Lambda_{\gamma}(t)|^2\leq C_T.
$$
Therefore, for any $\varepsilon\in (0,\varepsilon_0]$,
\begin{align*}
\sup_{t\in [0,T]}|\Lambda_{\gamma}(t/\varepsilon)|^2\leq & \sup_{t\in [\varepsilon T/\varepsilon_0,T]}|\Lambda_{\gamma}(t/\varepsilon)|^2+\sup_{t\in [0,\varepsilon T/\varepsilon_0]}|\Lambda_{\gamma}(t/\varepsilon)|^2\\
\leq& C_T\int^{T/\varepsilon}_0 \alpha(s)\Lambda^2(s)\,ds.
\end{align*} The proof is complete. \end{proof}

\begin{remark}\label{remark3.7}
Let $\alpha(t)=c_0(1+t)^{\beta}$ with $c_0>0$ and $\beta\in (-1,\infty)$. Then, \eqref{e:remarkadd} is satisfied.
According to the proof of Lemma \ref{RE3.5}, there exist two positive constants $c_1\leq c_2$ such that
$$c_1\int^{t}_0 \alpha^{-1}(s)\,ds\leq\int^{t}_0 \alpha(s)\Lambda^2(s)\,ds\leq c_2\int^{t}_0 \alpha^{-1}(s)\,ds,\quad  t\geq 0.$$ Thus, by Theorem \ref{main result 1}
and Lemma \ref{RE3.5},
\begin{equation}\label{EEquation}
\sup_{t\in [0, T]}[\EE |X_{t}^{\varepsilon}-\bar{X}^{\varepsilon}_{t}|^2]^{1/2}\leq \left\{\begin{array}{l}
\displaystyle C\varepsilon^{({1+\beta})/{2}},\quad\quad\quad\quad -1<\beta<1, \\
\displaystyle C\varepsilon\left(\log{1/\varepsilon}\right)^{1/2},\quad \beta=1,\\
\displaystyle C\varepsilon,\quad\quad\quad \quad\quad\quad\beta>1,
\end{array}\right.
\end{equation}
In particular, when $\beta=0$, the rate is of order $\varepsilon^{1/2}$, which is consistent with the optimal result for the convergence rate of the strong averaging principle in time-homogeneous settings (see e.g. \cite{L2010}). On the other hand, \eref{EEquation} indicates that the convergence rate can be slower or faster than $\varepsilon^{1/2}$ in time-inhomogeneous settings. That is, the convergence rate of the strong averaging principle is influenced by the function $\alpha(t)$ involved in Assumption {\rm\ref{B1}}.
\end{remark}

\subsection{Weak averaging principle: general case}\label{section3.2}
In Theorem \ref{main result 1}, the diffusion coefficient $\sigma(x,y)$ is assumed to be independent of $y$. This crucial assumption ensures the validity of the strong averaging principle. For general diffusion coefficient $\sigma(x,y)$, we can investigate the corresponding weak convergence. To facilitate this, we assume that Assumption {\rm\ref{B1}} holds and the following assumption is satisfied:
\begin{conditionB}\label{A3}
 \begin{itemize} \it
 \item[{\rm(i)}] Assume that $f(t,\cdot,\cdot)\in C^{4,5}(\RR^n\times\RR^m,\RR^m)$ and $g(t,\cdot,\cdot)\in C^{4,5}(\RR^n\times\RR^m,\RR^{m\times d_2})$. Moreover, there exists $C>0$ such that for any $x\in\RR^n,y\in\RR^m$ and $t\geq 0$
\begin{equation}\label{ReA11}
\|\partial^{i}_x\partial^j_yf(t,x,y)\|+\|\partial^{i}_x\partial^j_yg(t,x,y)\|^2\leq C\alpha(t),\quad  0\leq i\leq 4,0\leq j\leq 5~ {\text with}~ 1\leq i+j\leq 5.
\end{equation}

\item[{\rm(ii)}] Assume that $b\in C^{4,4}(\RR^n\times\RR^m,\RR^n)$ and $\sigma\in C^{4,4}(\RR^n\times\RR^m,\RR^{n\times d_1})$. Moreover, there exists $C>0$ such that for any $x\in\RR^n,y\in\RR^m$,
\begin{align}
\|\partial^{i}_x\partial^j_yb(x,y)\|+\|\sigma(x,y)\|+\|\partial^{i}_x\partial^j_y\sigma(x,y)\|\leq C, \quad 0\leq i,j\leq 4~ {\text with}~ 1\leq i+j\leq 4.\label{ConA32}
\end{align}

\item[{\rm(iii)}] Assume that
\begin{align}
\inf_{x\in\RR^n, y\in \mathbb{R}^m, z\in \RR^{n}\backslash \{0\}}\frac{\langle(\sigma\sigma^{\ast})(x,y)\cdot z, z\rangle}{|z|^2}>0. \label{NonD}
		\end{align}
 \end{itemize}
\end{conditionB}

\begin{remark}
It is clear that Assumption {\rm\ref{A2}} holds under Assumption {\rm\ref{A3}}(ii), and so Assumption {\rm\ref{A3}} is stronger than Assumption {\rm\ref{A2}}. Stronger smoothness conditions \eqref{ReA11} and \eqref{ConA32} on the coefficients are employed to study regularity estimates of partial derivatives for the averaged coefficients $\bar{b}(t,x)$ and $\bar{\sigma}(t,x)$ in the averaged equation \eqref{AVE3} below with respect to $x$ up to the fourth order. For this purpose, the uniformly non-degeneracy condition \eqref{NonD} is also required. \end{remark}

In this case, the corresponding averaged equation is given by
\begin{equation}
d\bar{X}^{\varepsilon}_{t}=\bar{b}(t/\varepsilon,\bar{X}^{\varepsilon}_t)\,dt+\bar\sigma(t/\varepsilon,\bar{X}^{\varepsilon}_t)\,d \bar{W}_t,\quad\bar{X}^{\varepsilon}_{0}=x, \label{AVE3}
\end{equation}
where $\bar{b}(t,x)$ is defined by \eref{barb}, $$\bar{\sigma}(t,x):=\left[\overline{\sigma\sigma^{\ast}}(t,x)\right]^{1/2}:=\Big(\int_{\RR^{m}}(\sigma\sigma^{\ast})(x,y)\,\mu^x_t(dy)\Big)^{1/2},\quad t\geq 0,$$ and $\{\bar{W_t}\}_{t\ge0}$ is a $n$-dimensional standard  Brownian motion.

Since $\bar{b}(t,x)=\EE b(x,\eta^x_t)$ and $(\overline{\sigma\sigma^{\ast}})(t,x)=\EE(\sigma\sigma^{\ast})(x,\eta^x_t)$, \eref{ConA32} and \eref{FourthDeta} imply that $\bar{b}(t,\cdot)\in C^{4}(\RR^n,\RR^n)$ and $\overline{\sigma\sigma^{\ast}}(t,\cdot)\in C^{4}(\RR^n,\RR^{n\times n})$ satisfy
\begin{align}
\sup_{t\geq 0,x\in\RR^n}\sum^4_{i=1}\big( \|\partial^i_x \bar{b}(t,x) \|+ \|\partial^{i}_x(\overline{\sigma\sigma^{\ast}})(t,x) \|\big)\leq C.\label{FourthDbarb}
\end{align}
This in turn yields that the SDE \eref{AVE3} has a unique solution, which is denoted by $\{\bar{X}^{\varepsilon,x}_t\}_{t\ge 0}$. Let $\{\bar{X}^{\varepsilon,s,x}_t\}_{t\geq s}$ be the solution of \eref{AVE3} with initial value starts from time $s\geq 0$ at $x$. In particular, $\{\bar{X}^{\varepsilon}_t\}_{t\ge 0}=\{\bar{X}^{\varepsilon,0,x}_t\}_{t\ge 0}$ obviously.

Furthermore, by $\eref{NonD}$, $\overline{\sigma\sigma^{\ast}}$ is non-degenerate, i.e.,
$$
\inf_{t\geq 0, x,z\in\RR^n}\frac{\langle \overline{\sigma\sigma^{\ast}}(t,x)\cdot z, z\rangle}{\|z\|^2}>0.
$$
Applying  this and \eqref{FourthDbarb} and following the proof of \cite[Lemma A.7]{CLX}, we have
$$
\sup_{t\geq 0,x\in\RR^n}\sum^4_{i=1}\|\partial^{i}_x\bar{\sigma}(t,x)\|\leq C,$$
which along with \eref{FourthDbarb} again and Assumption \ref{A3}(ii) gives us that $\bar X^{\varepsilon,s,x}_t$ is fourth differentiable in the mean square sense with respect to $x$, and for any $T>0$, there exists $C_T>0$ such that
\begin{align}\label{D barX}
\sup_{0\leq s \leq t\leq T}\sum^{4}_{i=1}\EE \|\partial^i_x \bar X^{\varepsilon,s,x}_t \|^4\leq C_T;
		\end{align}
see the proof of Lemma \ref{DFY}.

Next, for any  $\varphi\in C^{4}_b(\RR^{n})$, consider the following nonautonomous Kolmogorov equation:
\begin{equation}\left\{\begin{array}{l}\label{KE1}
			\displaystyle
			\partial_s u^{\varepsilon}(s,t,x)=-\bar{\mathscr{L}}^{\varepsilon}_s u^{\varepsilon}(s,t,x),\quad s\in[0, t], \\
			u^{\varepsilon}(t,t, x)=\varphi(x),
		\end{array}\right.
\end{equation}
where
$\bar{\mathscr{L}}^{\varepsilon}_s$ is the generator of the averaged equation \eref{AVE3}, i.e.,
\begin{align}
\bar{\mathscr{L}}^{\varepsilon}_s\varphi(x):= \langle \bar b(s/\varepsilon,x), \nabla \varphi(x) \rangle+\frac{1}{2}\text{Tr} [\overline{\sigma\sigma^{\ast}}(s/\varepsilon,x)\nabla^2\varphi(x) ].\label{DebarL}
\end{align}
Hence, the unique solution of \eref{KE1} is given by
$$
u^{\varepsilon}(s,t,x)=\EE\varphi(\bar{X}^{\varepsilon,s,x}_t),\quad t\ge s.
$$
Then, according to $\varphi\in C^{4}_b(\RR^{n})$ and \eref{D barX}, we know that for any $T>0$, there exists a constant $C_T>0$ such that
\begin{equation}
\label{Lemma 5.1-0}\sup_{0\leq s\leq t\leq T, x\in\RR^n}\sum^{4}_{i=1}\|\partial^i_{x}u^{\varepsilon}(s,t,x)\|\leq C_{T}
\end{equation}
 and
\begin{equation}\label{Lemma 5.1}
\sup_{0\leq s\leq t\leq T}\sum^{2}_{i=1}\|\partial_s(\partial^i_x u^{\varepsilon}(s,t,x))\|\leq C_T(1+|x|).
\end{equation}
We note that partial derivatives of $ u^{\varepsilon}(s,t,x)$ with respect to $x$ up to the fourth order are taken into account because regularity condition such like \eref{CenPol} is necessary for analyzing a new Poisson equation, see \eref{WPE1} below. On the other hand, the term $(1+|x|) $ arises from the linear growth of $\bar{b}(t,\cdot)$, see \eqref{linear barb}.

We are in a position to present the second main result in our paper.

\begin{theorem} \label{main result 2}
Suppose that Assumptions {\rm\ref{B1}} and {\rm\ref{A3}} hold. Let $\{(X_t^\varepsilon, Y_t^\varepsilon)\}_{t\ge0}$ and $\{\bar X_t^\varepsilon\}_{t\ge0}$ be the solutions to \eqref{Equation} and \eqref{AVE3} respectively. Then for any $T>0$ and $\varphi\in C^4_b(\RR^n)$, there is a constant $C_{\varphi,T}>0$ so that for all $(x,y)\in\RR^{n}\times\RR^{m}$ and $\varepsilon>0$,
\begin{align}
\sup_{0\leq t\leq T}|\EE \varphi(X^{\varepsilon}_t)-\EE \varphi(\bar{X}^{\varepsilon}_t)|\leq C_{\varphi,T}(1+|x|^2+|y|^2)\varepsilon\sup_{t\in [0,T]}\Lambda_\gamma(t/\varepsilon).\label{AVWO}
\end{align}
\end{theorem}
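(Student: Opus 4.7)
The strategy is to combine the backward Kolmogorov equation \eqref{KE1} satisfied by $u^{\varepsilon}(s,t,x):=\EE\varphi(\bar{X}^{\varepsilon,s,x}_{t})$ with the nonautonomous Poisson equation machinery from Proposition~\ref{P3.6}. First, apply It\^o's formula to $u^{\varepsilon}(s,t,X^{\varepsilon}_{s})$ on $s\in[0,t]$, take expectation, and use $\partial_s u^{\varepsilon}=-\bar{\mathscr{L}}^{\varepsilon}_{s}u^{\varepsilon}$ to obtain
\begin{equation*}
\EE\varphi(X^{\varepsilon}_{t})-\EE\varphi(\bar{X}^{\varepsilon}_{t})=\int_{0}^{t}\EE\left[(\mathscr{L}^{Y^{\varepsilon}_{s}}-\bar{\mathscr{L}}^{\varepsilon}_{s})u^{\varepsilon}(s,t,\cdot)(X^{\varepsilon}_{s})\right]ds=:\int_{0}^{t}\EE[A_{s}+B_{s}]\,ds,
\end{equation*}
where $\mathscr{L}^{y}\phi(x):=\langle b(x,y),\nabla\phi(x)\rangle+\tfrac{1}{2}\text{Tr}[(\sigma\sigma^{*})(x,y)\nabla^{2}\phi(x)]$ is the slow-component generator with the fast value frozen at $y$, and the integrand splits as $A_{s}=\langle b(X^{\varepsilon}_{s},Y^{\varepsilon}_{s})-\bar{b}(s/\varepsilon,X^{\varepsilon}_{s}),\nabla u^{\varepsilon}(s,t,X^{\varepsilon}_{s})\rangle$ and $B_{s}=\tfrac{1}{2}\text{Tr}[(\sigma\sigma^{*}-\overline{\sigma\sigma^{*}})(s/\varepsilon,X^{\varepsilon}_{s},Y^{\varepsilon}_{s})\nabla^{2}u^{\varepsilon}(s,t,X^{\varepsilon}_{s})]$. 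By construction of $\bar{b}$ and $\overline{\sigma\sigma^{*}}$, the functions $H_{1}(\tau,x,y):=b(x,y)-\bar{b}(\tau,x)$ and $H_{2}(\tau,x,y):=(\sigma\sigma^{*})(x,y)-\overline{\sigma\sigma^{*}}(\tau,x)$ satisfy the centering condition \eqref{CenCon}, and Assumption~\ref{A3} together with the regularity of $\bar{b},\overline{\sigma\sigma^{*}}$ in \eqref{FourthDbarb} provides \eqref{CenPol}.

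Next, invoke Proposition~\ref{P3.6} to obtain vector- and matrix-valued solutions $\Phi_{1},\Phi_{2}$ to $\partial_{\tau}\Phi_{i}+\mathscr{L}^{x}_{2}(\tau)\Phi_{i}(\tau,x,\cdot)(y)=-H_{i}(\tau,x,y)$, with the bounds \eqref{E1} on $\Phi_{i}$ and its $x$- and $y$-derivatives. Apply It\^o's formula to the two scalar products
\begin{equation*}
\Psi_{1}(s):=\langle\Phi_{1}(s/\varepsilon,X^{\varepsilon}_{s},Y^{\varepsilon}_{s}),\nabla u^{\varepsilon}(s,t,X^{\varepsilon}_{s})\rangle,\quad \Psi_{2}(s):=\text{Tr}\left[\Phi_{2}(s/\varepsilon,X^{\varepsilon}_{s},Y^{\varepsilon}_{s})\nabla^{2}u^{\varepsilon}(s,t,X^{\varepsilon}_{s})\right].
\end{equation*}
In the differential of each $\Psi_{i}$, the contribution $\varepsilon^{-1}\partial_{\tau}\Phi_{i}$ from the time-slot $s/\varepsilon$ combines with the It\^o drift correction $\varepsilon^{-1}\mathscr{L}^{X^{\varepsilon}_{s}}_{2}(s/\varepsilon)\Phi_{i}$ coming from the fast SDE, and the Poisson equation collapses their sum into $-\varepsilon^{-1}A_{s}$ (respectively $-\varepsilon^{-1}B_{s}$). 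The remaining ingredients are contributions from the slow SDE (involving $\partial_{x}\Phi_{i},\partial^{2}_{x}\Phi_{i}$ paired with $\nabla^{i+1}u^{\varepsilon}$), a term $\Phi_{i}\cdot\partial_{s}\nabla^{i}u^{\varepsilon}$ from the $s$-derivative of $u^{\varepsilon}$, and a cross-variation term of the form $\partial_{x}\Phi_{i}\cdot(\sigma\sigma^{*})(X^{\varepsilon}_{s},Y^{\varepsilon}_{s})\cdot\nabla^{i+1}u^{\varepsilon}$ arising from the Brownian motion $W^{1}$ shared between $X^{\varepsilon}$ and $\Phi_{i}$. Integrating over $[0,t]$ and rearranging yields
\begin{equation*}
\int_{0}^{t}(A_{s}+B_{s})\,ds=-\varepsilon\big[\Psi_{1}+\Psi_{2}\big]_{0}^{t}+\varepsilon\int_{0}^{t}R_{s}\,ds+\varepsilon M_{t},
\end{equation*}
where $R_{s}$ is the sum of the remainder terms above and $M_{t}$ is a zero-mean martingale generated by the stochastic integrals against $W^{1}$ and $W^{2}$.

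Finally, take absolute values and expectation, and bound each piece using \eqref{E1} for $\Phi_{i}$, the estimates \eqref{Lemma 5.1-0}--\eqref{Lemma 5.1} for derivatives of $u^{\varepsilon}$, the moment bounds of Lemma~\ref{PMY}, the boundedness of $\sigma$ and the linear growth of $b$ from Assumption~\ref{A3}. The boundary term contributes at most $C_{T}\varepsilon(1+|x|+|y|)\sup_{[0,T]}\Lambda(t/\varepsilon)$, while each remainder integrand is pointwise bounded in expectation by $C_{T}(1+|x|^{2}+|y|^{2})\Lambda_{\gamma}(s/\varepsilon)$; passing to the supremum and multiplying by $\varepsilon$ gives the target bound $C_{\varphi,T}(1+|x|^{2}+|y|^{2})\varepsilon\sup_{[0,T]}\Lambda_{\gamma}(t/\varepsilon)$. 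The principal technical obstacle is the control of the remainder terms containing $\partial_{x}\Phi_{i}$: these enjoy only the weaker decay $\Lambda_{\gamma}$ rather than $\Lambda$ (which is precisely why $\Lambda_{\gamma}$, not $\Lambda$, appears in the final rate), and they carry the linear-growth factor $(1+|X^{\varepsilon}_{s}|+|Y^{\varepsilon}_{s}|)$; combined with the linear growth $(1+|X^{\varepsilon}_{s}|)$ from the $\partial_{s}\nabla^{i}u^{\varepsilon}$ term via \eqref{Lemma 5.1} and the growth of $\Phi_{i}$ itself, this forces the quadratic dependence $(1+|x|^{2}+|y|^{2})$ on the initial data and pins down $\Lambda_{\gamma}$ as the correct decay factor in \eqref{AVWO}.
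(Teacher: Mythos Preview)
Your proposal is correct and follows the same overall route as the paper: represent $\EE\varphi(X^{\varepsilon}_{t})-\EE\varphi(\bar{X}^{\varepsilon}_{t})$ via the backward Kolmogorov equation for $u^{\varepsilon}$, identify the integrand as a centered function of the fast variable, invoke the nonautonomous Poisson equation of Proposition~\ref{P3.6}, and run It\^o's formula on the corrector evaluated along $(s,s/\varepsilon,X^{\varepsilon}_{s},Y^{\varepsilon}_{s})$. The difference is purely organizational. You solve two separate Poisson equations for the ``naked'' centered sources $H_{1}=b-\bar b$ and $H_{2}=\sigma\sigma^{*}-\overline{\sigma\sigma^{*}}$, and then pair the correctors $\Phi_{1},\Phi_{2}$ externally with $\nabla u^{\varepsilon},\nabla^{2}u^{\varepsilon}$, which forces you to track the It\^o product-rule and cross-variation terms explicitly. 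The paper instead folds $\nabla u^{\varepsilon}$ and $\nabla^{2}u^{\varepsilon}$ into a single scalar source $F^{t}(r,s,x,y)=\langle H_{1}(s,x,y),\partial_{x}u^{\varepsilon}(r,t,x)\rangle+\tfrac12\text{Tr}[H_{2}(s,x,y)\partial_{x}^{2}u^{\varepsilon}(r,t,x)]$, carrying the extra parameter $r$, and solves one scalar Poisson equation for $\tilde\Phi^{t}(r,s,x,y)$; applying It\^o's formula to $\tilde\Phi^{t}(s,s/\varepsilon,X^{\varepsilon}_{s},Y^{\varepsilon}_{s})$ then produces a $\partial_{r}\tilde\Phi^{t}$ term that absorbs exactly what you isolate as $\Phi_{i}\cdot\partial_{s}\nabla^{i}u^{\varepsilon}$, while $\mathscr{L}^{Y^{\varepsilon}_{s}}_{1}$ acting on $\tilde\Phi^{t}$ in $x$ automatically contains your cross-variation terms. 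Both organisations require the same inputs (\eqref{E1}, \eqref{Lemma 5.1-0}--\eqref{Lemma 5.1}, Lemma~\ref{PMY}) and yield the same bound; the paper's packaging is somewhat cleaner bookkeeping, while yours makes the product structure more transparent.
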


\begin{proof}
Fix $t\in (0,T]$, and denote by $\tilde{u}^{\varepsilon,t}(s,x)=u^{\varepsilon}(s,t,x)$ for any $s\in [0,t]$. It\^{o}'s formula implies
	\begin{align*}
		\tilde{u}^{\varepsilon,t}(t, X^{\varepsilon}_t)=\tilde{u}^{\varepsilon,t}(0, x)+\int^t_0 \partial_s \tilde{u}^{\varepsilon,t}(s,X^{\varepsilon}_s )\,ds+\int^t_0 \mathscr{L}^{Y^{\varepsilon}_s}_{1}\tilde{u}^{\varepsilon,t}(s, \cdot)(X^{\varepsilon}_s)\,ds+\tilde{M}_t,
	\end{align*}
where
\begin{eqnarray*}
\mathscr{L}^{y}_{1}\varphi(x):=\langle b(x,y), \nabla \varphi(x)\rangle+\frac{1}{2}\text{Tr}\big[({\sigma\sigma})^{\ast}(x,y)\nabla^2\varphi(x)\big],\varphi\in C^2(\RR^n),
\end{eqnarray*}
and $$\tilde{M}_t:=\int^t_0 \langle \partial_{x}\tilde{u}^{\varepsilon,t}(s,X_{s}^{\varepsilon}),\sigma(X_{s}^{\varepsilon},Y_{s}^{\varepsilon})\,dW_{s}^{1} \rangle$$  is a local martingale.

Since $\tilde{u}^{\varepsilon,t}(t, X^{\varepsilon}_t)=\varphi(X^{\varepsilon}_t)$, $\tilde{u}^{\varepsilon,t}(0, x)=\EE\varphi(\bar{X}^{\varepsilon}_t)$, and, thanks to
\eqref{KE1},
$$\partial_s \tilde{u}^{\varepsilon,t}(s, x)=-\bar{\mathscr{L}}^{\varepsilon}_s \tilde{u}^{\varepsilon,t}(s,\cdot)(x),$$
we have	\begin{align*}
 |\EE\varphi(X^{\varepsilon}_{t})-\EE\varphi(\bar{X}^{\varepsilon}_{t}) |
 =&\left|\EE\int^t_0 \mathscr{L}^{Y^{\varepsilon}_s}_{1}(s)\tilde{u}^{\varepsilon,t}(s,\cdot)(X^{\varepsilon}_s)\,ds-\EE\int^t_0 \bar{\mathscr{L}}^{\varepsilon}_s \tilde{u}^{\varepsilon,t}(s, \cdot)(X^{\varepsilon}_s )\,ds\right| \\
=&\bigg|\EE \int^t_0   \bigg(\langle b(X^{\varepsilon}_s,Y^{\varepsilon}_s) - \bar{b}(s/\varepsilon,X^{\varepsilon}_s),\partial_x \tilde{u}^{\varepsilon,t}(s,X^{\varepsilon}_s ) \rangle \\
		& \qquad \quad +\frac{1}{2}\text{Tr}\big[(\sigma\sigma^{\ast}(X^{\varepsilon}_s,Y^{\varepsilon}_s)  -  \overline{\sigma \sigma^{\ast}}(s/\varepsilon,X^{\varepsilon}_s))\partial_x^{2}\tilde{u}^{\varepsilon,t}(s, X^{\varepsilon}_s)\big] \bigg) \,ds\bigg|.
	\end{align*}
	
For $r\in [0,t]$, $s\geq 0$, $x\in\RR^{n}$ and $y\in\RR^m$, define
\begin{align*}
F^t(r,s,x,y):= \langle b(x,y)-\bar{b}(s,x), \partial_x \tilde{u}^{\varepsilon,t}(r,x) \rangle+\frac{1}{2}\text{Tr} [ (\sigma\sigma^{*}(x,y)  -   \overline{\sigma \sigma^{\ast}}(s,x) )
\partial_x^{2}\tilde{u}^{\varepsilon,t}(r,x) ].
	\end{align*}
It is easy to check that $F^t$ satisfies the following centering condition
$$
\int_{\RR^m}F^t(r,s,x,y)\,\mu^x_s(dy)=0,\quad  r\in [0,t], s\geq 0, x\in\RR^{n}
$$
and  \eref{CenPol} as well as
$$\sup_{0\leq r\leq t\leq T,s\geq 0,y\in\RR^m}|\partial_y\partial_rF^t(r,s,x,y)|\leq C_T(1+|x|),$$ thanks to \eqref{Lemma 5.1-0}, \eqref{Lemma 5.1} and \eref{ConA32}.
Let  $\mathscr{L}^{x}_2(s)$ be the operator given by \eqref{L_2}.
Consider the following nonautonomous Poisson equation
	\begin{align}
		\partial_s \tilde{\Phi}^t(r,s,x,y)+\mathscr{L}^{x}_2(s)\tilde{\Phi}^t(r,s,x,y)=-F^t(r,s,x,y).\label{WPE1}
	\end{align}
Then, according to Proposition \ref{P3.6} and its proof, we can claim that \eqref{WPE1} admits a solution
$$\tilde{\Phi}^t(r,s,x,y)=\int^{+\infty}_{s}\EE F^{t}(r,u,x,Y^{s,x,y}_u)\,du,$$
and for any $T>0$, there exists $C_T>0$ such that for any $s\geq 0$, $x\in\RR^{n}$ and $y\in\RR^m$,
	\begin{align*}
	&\sup_{0\leq r\leq t\leq T} \{ |\tilde{\Phi}^t(r,s,x,y) |+ |\partial_r \tilde{\Phi}^t(r,s,x,y) |+ \|\partial_x \tilde{\Phi}^t(r,s,x,y) \|+ \| \partial_x^{2}\tilde{\Phi}^t(r,s,x,y) \| \}\\
	&\leq C_T\Lambda_\gamma(s)(1+|x|^2+|y|^2).
	\end{align*}

Using It\^o's formula, we get
\begin{align*}
&\EE\tilde{\Phi}^t(t,t/\varepsilon,X_{t}^{\varepsilon},Y^{\varepsilon}_{t})\\ &=\tilde{\Phi}^t(0,0,x,y) +  \EE \int^t_0 \partial_r\tilde{\Phi}^t(s,s/\varepsilon,X_{s}^{\varepsilon},Y^{\varepsilon}_s)\,ds + \EE \int^t_0 \mathscr{L}^{Y^{\varepsilon}_{s}}_{1}\tilde{\Phi}^t(s,s/\varepsilon,\cdot,Y_{s}^{\varepsilon})(X^{\varepsilon}_{s})\,ds \\
&\quad+\varepsilon^{-1}\EE\int^t_0(\partial_s\tilde{\Phi}^t(s,s/\varepsilon,X_{s}^{\varepsilon},Y^{\varepsilon}_{s})
+\mathscr{L}^{X_{s}^{\varepsilon}}_2(s/\varepsilon)\tilde{\Phi}^t(s,s/\varepsilon,X_{s}^{\varepsilon},\cdot)(Y^{\varepsilon}_{s}))\,ds,
\end{align*}
which implies that
\begin{align*}
&-\EE\int^t_0[\partial_s\tilde{\Phi}^t(s,s/\varepsilon,X_{s}^{\varepsilon},Y^{\varepsilon}_{s})
+\mathscr{L}^{X_{s}^{\varepsilon}}_2(s/\varepsilon)\tilde{\Phi}^t(s,s/\varepsilon,X_{s}^{\varepsilon},\cdot)(Y^{\varepsilon}_{s})]\,ds\\
&=\varepsilon\Big[\tilde{\Phi}^t(0,0,x,y)  -  \EE\tilde{\Phi}^t(t,t/\varepsilon,X_{t}^{\varepsilon},Y^{\varepsilon}_{t})  +  \EE\int^t_0     \partial_r\tilde{\Phi}^t(s,s/\varepsilon,X_{s}^{\varepsilon},Y^{\varepsilon}_s)\,ds\\
&\quad\quad+\EE\int^t_0     \mathscr{L}^{Y^{\varepsilon}_{s}}_{1}\tilde{\Phi}^t(s,s/\varepsilon, \cdot,Y_{s}^{\varepsilon})(X^{\varepsilon}_{s})\,ds\Big].
\end{align*}

According to all the conclusions above and Lemma \ref{PMY},
\begin{align*}
&\sup_{0\leq t\leq T} |\EE\varphi(X^{\varepsilon}_{t})-\EE\varphi(\bar{X}^{\varepsilon}_{t}) |\\
 & =\sup_{0\leq t\leq T} \left|\EE\int^t_0 F^t(s,s/\varepsilon,X_{s}^{\varepsilon},Y^{\varepsilon}_{s})\,ds \right|\\
 &=\sup_{0\leq t\leq T}\left|\EE\int^t_0 (\partial_s\tilde{\Phi}^t(s,s/\varepsilon,X_{s}^{\varepsilon},Y^{\varepsilon}_{s})
 +\mathscr{L}^{X_{s}^{\varepsilon}}_2(s/\varepsilon)\tilde{\Phi}^t(s,s/\varepsilon,X_{s}^{\varepsilon},\cdot)(Y^{\varepsilon}_{s}))\,ds\right|\\
&\leq \varepsilon\Bigg[\sup_{0\leq t\leq T} |\tilde{\Phi}^t(0,0,x,y) |+\sup_{0\leq t\leq T} |\EE\tilde{\Phi}^t(t,t/\varepsilon,X_{t}^{\varepsilon},Y^{\varepsilon}_{t}) |+\sup_{0\leq t\leq T}\EE\int^t_0 |\partial_r\tilde{\Phi}^t(s,s/\varepsilon,X_{s}^{\varepsilon},Y^{\varepsilon}_s) |\,ds\\
		&\quad\quad+\sup_{0\leq t\leq T}\EE\int^t_0|\mathscr{L}^{Y^{\varepsilon}_{s}}_{1}\tilde{\Phi}^t(s,s/\varepsilon, \cdot, Y^{\varepsilon}_{s})(X_{s}^{\varepsilon})|\,ds\Bigg]\\
&\leq C_T(1+|x|^2+|y|^2)\varepsilon\sup_{t\in [0,T]}\Lambda_\gamma(t/\varepsilon).
	\end{align*}
The proof is complete.
\end{proof}

\begin{remark}
 Let $\alpha(t)=c_0(1+t)^{\beta}$ with $c_0>0$ and $\beta\in (-1,\infty)$. Under the assumptions of Theorem \ref{main result 2}, we know that for any $\varphi\in C^4_b(\RR^n)$,
\begin{equation}\label{WEEquation}
\sup_{0\leq t\leq T}|\EE\varphi(X^{\varepsilon}_{t})-\EE\varphi(\bar{X}^{\varepsilon}_{t})|\leq \left\{\begin{array}{l}
\displaystyle C\varepsilon^{1+\beta},\quad -1<\beta<0, \\
\displaystyle C\varepsilon,\quad\quad\quad\beta\geq 0.
\end{array}\right.
\end{equation}
In particular, if $\beta=0$, then the convergence rate is of the order $1$, which is the same as that for the convergence rate of the weak averaging principle in time-homogeneous settings (see e.g. \cite{B2020}). When $\beta<0$, \eref{WEEquation} shows that the weak convergence order can be slower than the order $1$.
\end{remark}

\section{Time-inhomogeneous multi-scale SDEs: convergent coefficients case }
In the previous section both of the averaged SDEs \eref{AVE} and \eref{AVE3} depend on the scaling parameter $\varepsilon$. One can expect
them to be independent of $\varepsilon$  if we further assume some additional conditions on the coefficients $f,g$ of the fast component $\{Y_t\}_{t\ge0}$ in the stochastic system (\ref{Equation}). In this section, we are restricted ourselves to the case that $f$ and $g$ converge as $t\to +\infty$. Specifically, we suppose that the following condition is satisfied:

\begin{conditionB}\label{A4}
Assume that  $\limsup_{t\to +\infty}\alpha(t)=\alpha>0$, and that there exist functions $\bar{f}:\RR^n\times \RR^m\to \RR^m$ and $\bar{g}:\RR^n\times \RR^m\to \RR^{m\times d_2}$ such that
\begin{align}
|f(t,x,y)-\bar{f}(x,y)|+\|g(t,x,y)-\bar{g}(x,y)\|\leq \phi(t)(1+|x|+|y|),\label{A41}
\end{align}
where $\phi:[0, \infty)\to (0, \infty)$ is locally bounded and satisfies that $\lim_{t\to +\infty}\phi(t)=0$.
\end{conditionB}

In the following, we suppose that Assumptions {\rm\ref{B1}} and {\rm\ref{A4}} hold. According to \eref{A10}, \eqref{A11} and \eref{A41}, for all $x_1,x_2\in \RR^n$ and $y_1,y_2\in \RR^m$,
\begin{align}\label{Disspative}\begin{split}
&2 \langle y_1-y_2,\bar f(x_1,y_1) - \bar f(x_2,y_2)  \rangle  + 3\left\|\bar g (x_1,y_1)-\bar g(x_2,y_2) \right\| ^{2}
 \le  -2\alpha|y_1 - y_2|^{2}+C|x_1-x_2|^2,\end{split}
\end{align}
and for all $x\in \RR^n$ and $y\in \RR^m$,
$$
|\bar{f}(x,y)|+\| \bar g(x,y)\|\leq C(1+|x|+|y|).$$
Thus, the following SDE
\begin{equation}
d\bar Y_{t}=\bar{f}(x,\bar Y_{t})\,dt+\bar{g}(x,\bar Y_{t})\,d W^2_t,\quad \bar Y_{0}=y\in \RR^{m}\label{barFrozen}
\end{equation}
admits a unique strong solution $\{\bar Y_{t}^{x,y}\}_{t\geq 0}$. Furthermore, it can be verified (see the proof of \eqref{FR1}) that for any $t \ge 0$, $x_1,x_2\in\RR^{n}$ and $y_1,y_2\in\RR^{m}$,
\begin{equation}
\mathbb{E} |\bar Y_t^{x_1,y_1}-\bar Y_t^{x_2,y_2} |^2\le Ce^{-2\alpha t}|y_1-y_2|^2+C|x_1-x_2|^2.\label{barflow}
\end{equation}

Let $\{\bar{P}^x_t\}_{t\ge0}$ be the transition semigroup of $\{\bar Y_{t}^{x,y}\}_{t\ge0}$, i.e., for any bounded measurable function $\varphi:\RR^{m}\rightarrow \mathbb{R}$,
$$
\bar P^{x}_{t}\varphi(y)=\EE\varphi(\bar Y_{t}^{x,y}), \quad  y\in\RR^{m}, t\geq 0.
$$
We can prove that $\{\bar P^x_t\}_{t\geq 0}$ admits a unique invariant measure $\mu^x$ such that
	\begin{equation}
		\int_{\RR^m}|y|^2\,\mu^x(dy)\leq C(1+|x|^2).\label{F3.17}
	\end{equation}
This along with \eqref{barflow} yields that for any Lipschitz function $\varphi$ on $\RR^m$,
\begin{equation}
\big|\mathbb{E}\varphi(\bar Y_t^{x,y})-\mu^x(\varphi)\big|\le C{\rm Lip} (\varphi)(1+|x|+|y|) e^{-\alpha t}.   \label{barErgodicity}
\end{equation}

\begin{lemma}\label{Lemma6.2}
Suppose that Assumptions {\rm\ref{B1}} and {\rm\ref{A4}} hold. Then, for any Lipschitz  function $\varphi$ and $\beta\in (0,1)$,
\begin{equation}
 |\mu^x_t(\varphi)-\mu^x(\varphi) |\le C_{\beta}{\rm Lip}(\varphi)(1+|x|)\left\{e^{-\beta\alpha t}+\left[\int^t_0 e^{-2\beta\alpha (t-r)}\phi^2(r)\,dr\right]^{1/2}\right\},   \label{SErgodicity}
\end{equation} where  $\{\mu^x_t\}_{t\in\RR}$ is an evolution system of measures of the semigroup $\{P^x_{s,t}\}_{t\geq s}$ given in Proposition $\ref{P:2.6}$.
\end{lemma}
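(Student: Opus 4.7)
The strategy is to introduce $\eta^x_0$ as a common random initial value for both $\{\eta^x_r\}_{r\ge 0}$ solving \eqref{F3.3} and $\{\bar Y^{x,\eta^x_0}_r\}_{r\ge 0}$ solving \eqref{barFrozen}, driven by the same Brownian motion $W^2$. Since $\mu^x_t(\varphi)=\EE\varphi(\eta^x_t)$ by definition of $\mu^x_t$, the triangle inequality gives
$$|\mu^x_t(\varphi)-\mu^x(\varphi)|\le {\rm Lip}(\varphi)\,\EE|\eta^x_t-\bar Y^{x,\eta^x_0}_t|+|\EE\varphi(\bar Y^{x,\eta^x_0}_t)-\mu^x(\varphi)|.$$
The second term is controlled by conditioning on $\eta^x_0$ and applying \eqref{barErgodicity}: using $\EE|\eta^x_0|^2\le C(1+|x|^2)$ from \eqref{F3.2}, one obtains a bound of order $C\,{\rm Lip}(\varphi)(1+|x|)e^{-\alpha t}$, which is dominated by $C\,{\rm Lip}(\varphi)(1+|x|)e^{-\beta\alpha t}$ since $\beta<1$.

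For the coupling term, set $Z_r:=\eta^x_r-\bar Y^{x,\eta^x_0}_r$, so $Z_0=0$. The plan is to apply It\^o's formula to $|Z_r|^2$ and perform the algebraic splitting
$$f(r,x,\eta^x_r)-\bar f(x,\bar Y^{x,\eta^x_0}_r)=[\bar f(x,\eta^x_r)-\bar f(x,\bar Y^{x,\eta^x_0}_r)]+[f(r,x,\eta^x_r)-\bar f(x,\eta^x_r)]$$
(and analogously for $g$), combined with the inequality $\|a+b\|^2\le 3\|a\|^2+\tfrac32\|b\|^2$ on the squared diffusion difference. This transfers the dissipativity onto the \emph{limit} coefficients, and by \eqref{Disspative} yields
$$\frac{d}{dr}\EE|Z_r|^2\le -2\alpha\EE|Z_r|^2+2\EE[|Z_r|\cdot|f(r,x,\eta^x_r)-\bar f(x,\eta^x_r)|]+\tfrac32\EE\|g(r,x,\eta^x_r)-\bar g(x,\eta^x_r)\|^2.$$
Applying Young's inequality with parameter $2(1-\beta)\alpha$ to the cross term, the perturbation bound \eqref{A41} on the remainders, and the uniform estimate $\sup_{r\ge 0}\EE|\eta^x_r|^2\le C(1+|x|^2)$ from \eqref{F3.2} produces $\frac{d}{dr}\EE|Z_r|^2\le -2\beta\alpha\EE|Z_r|^2+C_\beta\phi^2(r)(1+|x|^2)$. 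Gr\"onwall's inequality together with $Z_0=0$ then gives $\EE|Z_t|^2\le C_\beta(1+|x|^2)\int_0^t e^{-2\beta\alpha(t-r)}\phi^2(r)\,dr$, and Cauchy--Schwarz delivers the square-root term in the claimed bound.

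The main delicate point is the algebraic splitting: it is crucial to transfer dissipativity from the time-inhomogeneous coefficients $f(r,\cdot),g(r,\cdot)$ (with the possibly oscillating rate $\alpha(r)$) to the autonomous limit coefficients $\bar f,\bar g$ (with the constant rate $\alpha$), so that the exponent $\beta\alpha$ in the final bound matches the ergodic rate of the limit equation \eqref{barFrozen}. The constants $3$ and $\tfrac32$ in the Young split for $\|\cdot\|^2$ are dictated by the coefficient $3$ appearing in \eqref{Disspative}, and the choice $2(1-\beta)\alpha$ in Young's inequality on the cross term leaves precisely the deficit $-2\beta\alpha$ required for the final decay rate.
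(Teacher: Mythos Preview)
Your proof is correct and follows essentially the same approach as the paper: couple $\eta^x_t$ with a solution of the limit equation \eqref{barFrozen}, use the algebraic splitting to transfer dissipativity onto the limit coefficients $\bar f,\bar g$, apply \eqref{Disspative} together with Young's inequality and \eqref{A41} to obtain the differential inequality $\tfrac{d}{dr}\EE|Z_r|^2\le -2\beta\alpha\EE|Z_r|^2+C_\beta\phi^2(r)(1+|x|^2)$, then Gr\"onwall plus \eqref{barErgodicity}. The only difference is cosmetic: the paper couples with $\bar Y^{x,0}_t$ (deterministic initial value $0$), so the Gr\"onwall bound carries an extra initial term $e^{-2\beta\alpha t}\EE|\eta^x_0|^2\le C(1+|x|^2)e^{-2\beta\alpha t}$ that is absorbed into the $e^{-\beta\alpha t}$ piece; you take the random initial value $\eta^x_0$ so that $Z_0=0$ and this term never appears, at the cost of a conditioning step to invoke \eqref{barErgodicity}.
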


\begin{proof}
It follows from  \eref{F3.3} and \eref{barFrozen} that
$$
d (\eta^x_t-\bar Y^{x,y}_t )=(f(t,x,\eta^x_t)-\bar f(x,\bar Y^{x,y}_t))\,dt+(g(t,x,\eta^x_t)-\bar g(x,\bar Y^{x,y}_t))\,d W^2_t,
$$
with $\eta^{x}_0-\bar Y^{x,y}_0=\eta^{x}_0-y$.
Then, according to It\^o's formula, \eref{Disspative}, Young's inequality, \eqref{A41} and \eref{F3.2}, for any $\beta\in (0,1)$,
\begin{align*}
\frac{d}{dt}\mathbb{E} |\eta^{x}_t-\bar Y^{x,y}_t |^2
= &2\mathbb{E} \langle \eta^{x}_t - \bar Y^{x,y}_t, f(t,x, \eta^{x}_t) - \bar f(x,\bar Y^{x,y}_t) \rangle  + \mathbb{E}  \|g(t,x,\eta^{x}_t)-\bar g(x, \bar Y^{x,y}_t) \|^2\\
\leq &2\mathbb{E} \langle \eta^{x}_t - \bar Y^{x,y}_t, f(t,x, \eta^{x}_t) - \bar f(x, \eta^{x}_t)\rangle  + 2\mathbb{E}  \|g(t,x,\eta^{x}_t)-\bar g(x,\eta^{x}_t) \|^2\\
&+2\mathbb{E} \langle \eta^{x}_t - \bar Y^{x,y}_t, \bar f(x, \eta^{x}_t) - \bar f(x,\bar Y^{x,y}_t) \rangle  + 2\mathbb{E}  \|\bar g(x,\eta^{x}_t)-\bar g(x, \bar Y^{x,y}_t) \|^2\\
\leq &-2\beta\alpha\mathbb{E} |\eta^{x}_t-\bar Y^{x,y}_t |^{2}
+C\phi^2(t) (1+|x|^{2}+\EE|\eta^{x}_t|^{2} )\\
\leq &-2\beta\alpha\mathbb{E} |\eta^{x}_t-\bar Y^{x,y}_t |^{2}
+C \phi^2(t)(1+|x|^{2}).
\end{align*}
This along with \eqref{F3.2} again implies that
\begin{align*}
\mathbb{E}|\eta^{x}_t-\bar Y^{x,y}_t|^{2}\le & e^{-2\beta\alpha t}\EE|\eta^{x}_0-y|^{2}+C_{\beta}(1+|x|^{2})\int^t_0 e^{-2\beta\alpha (t-r)}\phi^2(r)\,dr \\
\le & C(1+|x|^{2}+|y|^2)e^{-2\beta\alpha t}+C_{\beta}(1+|x|^{2})\int^t_0 e^{-2\beta\alpha (t-r)}\phi^2(r)\,dr.
\end{align*}

Combining this with \eref{barErgodicity}, we get that for any Lipschitz function $\varphi$,
\begin{align*}
|\mu^x_t(\varphi)-\mu^x(\varphi)|\leq &  |\EE(\varphi(\eta^{x}_t)-\varphi(\bar Y^{x,0}_t)) |+ |\EE\varphi(\bar Y^{x,0}_t)-\mu^x(\varphi) |\\
\leq& C{\rm Lip}(\varphi)(1+|x|)e^{-\beta\alpha t}+C_{\beta}{\rm Lip}(\varphi)(1+|x|)\left[\int^t_0 e^{-2\beta\alpha (t-r)}\phi^2(r)\,dr\right]^{1/2}.
\end{align*}
The proof is complete.
\end{proof}
\begin{remark}
We note that for any $\eta>0$,
\begin{align}
\lim_{t\rightarrow +\infty} \int^{t}_0 e^{-\eta (t-r)}\phi^2(r)\,dr=0.\label{Re4.11}
\end{align}
Indeed, for any $\delta>0$, it follows from the fact  $\lim_{t\to +\infty}\phi(t)=0$ that there exists $N_1>0$ so that $\phi^2(t)\leq \eta\delta/2$ for all $t\geq N_1.$
On the other hand, since $\phi$ is locally bounded, there exists $N_2>0$ such that for all $t\geq N_2$,
$$e^{-\eta t}\int^{N_1}_0 e^{\eta r}\phi^2(r)\,dr\leq \delta/2.$$
Then, for any $t\geq N_1\vee N_2$,
\begin{align*}
\int^{t}_0 e^{-\eta(t-r)}\phi^2(r)\,dr=e^{-\eta t}\int^{N_1}_0 e^{\eta r}\phi^2(r)\,dr+e^{-\eta t}\int^{t}_{N_1} e^{\eta r}\phi^2(r)\,dr
\leq \delta/2+\delta/2=\delta.
\end{align*}
In particular,
\begin{align*}
\lim_{t\to +\infty}\int^{t}_0 e^{-\eta(t-r)}\phi^2(r)\,dr=0,
\end{align*}
which proves the desired assertion \eref{Re4.11}.
\end{remark}

\subsection{Strong averaging principle: convergent coefficients case}

In this part, we suppose that $f$ and $g$ fulfill Assumptions {\rm\ref{B1}} and {\rm\ref{A4}}, and $b$ and $\sigma$ satisfy Assumption {\rm\ref{A2}} such that $\sigma(x,y)=\sigma(x)$ for all $x\in \RR^n$ and $y\in \RR^m$. Consider the averaged SDE:
\begin{equation}
d\bar{X}_{t}=\bar b_c(\bar{X}_t)\,dt+\sigma(\bar{X}_t)\,d W^1_t,\quad\bar{X}_{0}=x, \label{AVE21}
\end{equation}
where
\begin{eqnarray}
\bar{b}_c(x)=\displaystyle\int_{\RR^{m}}b(x,y)\,\mu^x(dy) \label{barb21}
\end{eqnarray}
and $\mu^x$ is the unique invariant measure of the SDE \eref{barFrozen}.

\begin{lemma} Suppose that Assumptions {\rm\ref{B1}}, {\rm\ref{A2}} and {\rm\ref{A4}} hold. Then, for $x\in\RR^{n}$, \eqref{AVE21} has a unique solution $\{\bar{X}\}_{t\ge0}$; moreover, for $T>0$, there exists a constant $C_{T}>0$ such that
\begin{equation}
\mathbb{E}\Big(\sup_{t\in [0, T]}|\bar{X}_{t}|^{2}\Big)\leq C_{T}(1+|x|^{2}).\label{FianlEE}
\end{equation}
\end{lemma}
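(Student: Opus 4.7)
The plan is to imitate the proof of Lemma 3.3 almost verbatim, with the evolution system of measures $\mu_t^x$ replaced by the invariant measure $\mu^x$ of the limiting time-homogeneous frozen SDE \eqref{barFrozen}, and with \eqref{WErodicity} replaced by the exponential ergodicity \eqref{barErgodicity}. Once $\bar b_c$ is shown to be Lipschitz with linear growth, standard strong-wellposedness and moment estimates for SDEs with Lipschitz coefficients finish the job.

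First I would establish the two analytic properties of $\bar b_c$. For the Lipschitz property, fix $x_1,x_2\in\RR^n$ and $t\ge 0$, and decompose
\begin{align*}
|\bar b_c(x_1)-\bar b_c(x_2)|
\le &\,\big|\mu^{x_1}(b(x_1,\cdot))-\EE b(x_1,\bar Y^{x_1,0}_t)\big|
+\big|\mu^{x_2}(b(x_2,\cdot))-\EE b(x_2,\bar Y^{x_2,0}_t)\big|\\
&+\big|\EE b(x_1,\bar Y^{x_1,0}_t)-\EE b(x_2,\bar Y^{x_2,0}_t)\big|.
\end{align*}
Since $b(x_i,\cdot)$ is Lipschitz uniformly in $x_i$ by \eqref{A21}, \eqref{barErgodicity} bounds the first two terms by $C(1+|x_1|+|x_2|)e^{-\alpha t}$. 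The third term is bounded by $C|x_1-x_2|+C(\EE|\bar Y^{x_1,0}_t-\bar Y^{x_2,0}_t|^2)^{1/2}\le C|x_1-x_2|$ via \eqref{A21} and \eqref{barflow} (the exponential term involving $|y_1-y_2|$ vanishes since both start at $0$). Letting $t\to\infty$ yields $|\bar b_c(x_1)-\bar b_c(x_2)|\le C|x_1-x_2|$. For the linear growth, combine the linear growth of $b$ from \eqref{A21} with \eqref{F3.17} to get $|\bar b_c(x)|\le \int_{\RR^m}|b(x,y)|\,\mu^x(dy)\le C(1+|x|)$.

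Next, Assumption~\ref{A2} already gives $\sigma$ Lipschitz and bounded, so the SDE \eqref{AVE21} has Lipschitz coefficients of linear growth, and classical theory yields a unique strong solution $\{\bar X_t\}_{t\ge 0}$. Finally, for the moment bound \eqref{FianlEE}, apply It\^o's formula to $|\bar X_t|^2$, use Burkholder--Davis--Gundy on the stochastic integral (or just take expectation on $|\bar X_t|^2$ followed by Doob's inequality on the martingale part), then invoke the linear growth of $\bar b_c$ and the boundedness of $\sigma$, and conclude by Gronwall's inequality applied to $t\mapsto \EE\sup_{s\in[0,t]}|\bar X_s|^2$.

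The main obstacle is really a bookkeeping point rather than a genuine difficulty: one must ensure that the term $\big|\mu^{x_i}(b(x_i,\cdot))-\EE b(x_i,\bar Y^{x_i,0}_t)\big|$ does vanish as $t\to\infty$, which requires that $b(x_i,\cdot)$ be Lipschitz uniformly in $x_i$ (supplied by \eqref{A21}) and that the constants in \eqref{barErgodicity} be independent of $t$. Everything else is a direct transcription of the argument already carried out in the time-inhomogeneous case of Lemma 3.3.
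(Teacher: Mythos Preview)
Your proposal is correct and follows essentially the same approach as the paper's own proof: the same triangle-inequality decomposition via $\EE b(x_i,\bar Y^{x_i,0}_t)$, the same use of \eqref{barErgodicity}, \eqref{A21}, and \eqref{barflow}, followed by sending $t\to\infty$. The paper is slightly terser, simply declaring that existence, uniqueness, and \eqref{FianlEE} ``hold obviously'' once the Lipschitz property is in hand, whereas you spell out the linear-growth step via \eqref{F3.17} and sketch the Gronwall argument for the moment bound; but the substance is identical.
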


\begin{proof}
Note that
for $x_1,x_2\in\RR^n$, by \eref{A21} and \eref{barErgodicity}, one has
\begin{align*}
 |\bar{b}_c(x_{1})-\bar{b}_c(x_{2}) |
= & \left|\int_{\RR^m}b(x_1,y)\,\mu^{x_1}(dy)-\int_{\RR^m}b(x_2,y)\,\mu^{x_2}(dy)\right|\\
\leq &\left|\int_{\RR^m}b(x_1,y)\,\mu^{x_1}(dy)-\EE  b(x_{1},\bar Y^{x_1,0}_t)\right|+ \left|\int_{\RR^m}b(x_2,y)\,\mu^{x_2}(dy)-\EE b(x_{2},\bar Y^{x_2,0}_t) \right|\\
 &+ |\EE  b(x_{1},\bar Y^{x_1,0}_t)-\EE  b(x_{2},\bar Y^{x_2,0}_t) |\\
\le& C(1+|x_1|+|x_2|)
e^{-\alpha t}+C|x_1-x_2|+C\EE |\bar Y^{x_1,0}_t-\bar Y^{x_2,0}_t |.
\end{align*}
Using \eref{barflow} and letting $t\to +\infty$, we obtain that
\begin{align}
|\bar{b}_c(x_{1})-\bar{b}_c(x_{2})| \leq C|x_{1}-x_{2}|.\label{Lipbarb}
\end{align}
Therefore, \eref{AVE21} admits a unique strong solution, and \eref{FianlEE} holds obviously.  		
\end{proof}

Now, we state our third main result.

\begin{theorem}\label{main result 3}
Suppose that Assumptions {\rm\ref{B1}}, {\rm\ref{A2}} and {\rm\ref{A4}} hold, and $\sigma(x,y)=\sigma(x)$. Let $\{X_t^\varepsilon, Y_t^\varepsilon \}_{t\ge0}$ and $\{\bar X_t\}_{t\ge0}$ be the solutions of \eqref{Equation} and \eqref{AVE21} respectively. Then, for any $T>0$ and $\beta\in(0,1)$, there exists a constant $C_{T,\beta}>0$ so that for $\varepsilon>0$,
\begin{align*}
\sup_{t\in [0, T]}\EE |X_{t}^{\varepsilon}-\bar{X}_{t} |^2\leq  C_{T,\beta}(1 + |x|^2 + |y|^2)\varepsilon^2\Bigg\{&\left[\int_0^{T/\varepsilon}\left(\int^{s}_0 e^{-2\beta\alpha (s-r)}\phi^2(r)\,dr\right)^{1/2}\,ds\right]^2\\
&  +\sup_{0\leq t\leq T}|\Lambda_{\gamma}(t/\varepsilon)|^2  +  \int^{T/\varepsilon}_0     \alpha(s)\Lambda^2(s)\,ds\Bigg\}.
\end{align*}
 \end{theorem}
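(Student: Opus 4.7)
The plan is to use a triangle-inequality decomposition through the intermediate averaged process $\{\bar{X}^{\varepsilon}_t\}_{t\ge0}$ introduced in \eqref{AVE}:
\[
X^{\varepsilon}_t-\bar{X}_t=\bigl(X^{\varepsilon}_t-\bar{X}^{\varepsilon}_t\bigr)+\bigl(\bar{X}^{\varepsilon}_t-\bar{X}_t\bigr).
\]
For the first summand we simply invoke Theorem \ref{main result 1}, which produces the two terms $\sup_{0\leq t\leq T}|\Lambda_{\gamma}(t/\varepsilon)|^2$ and $\int^{T/\varepsilon}_0\alpha(s)\Lambda^2(s)\,ds$ in the bound. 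All the work will therefore go into controlling the difference $\bar{X}^{\varepsilon}_t-\bar{X}_t$ between the two averaged equations \eqref{AVE} and \eqref{AVE21}, which differ only through their drifts $\bar b(t/\varepsilon,\cdot)$ and $\bar b_c(\cdot)$.

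For the second summand I would subtract the two defining SDEs, apply It\^o's formula to $|\bar{X}^{\varepsilon}_t-\bar{X}_t|^2$, and split the drift discrepancy as
\[
\bar b(s/\varepsilon,\bar{X}^{\varepsilon}_s)-\bar b_c(\bar{X}_s)=\bigl[\bar b(s/\varepsilon,\bar{X}^{\varepsilon}_s)-\bar b_c(\bar{X}^{\varepsilon}_s)\bigr]+\bigl[\bar b_c(\bar{X}^{\varepsilon}_s)-\bar b_c(\bar{X}_s)\bigr].
\]
Using the Lipschitz property of $\sigma$ together with \eqref{Lipbarb}, the usual Burkholder--Davis--Gundy and Gr\"onwall argument yields
\[
\sup_{0\leq t\leq T}\EE|\bar{X}^{\varepsilon}_t-\bar{X}_t|^2\leq C_T\sup_{0\leq t\leq T}\EE\left|\int_0^{t}\bigl[\bar b(s/\varepsilon,\bar{X}^{\varepsilon}_s)-\bar b_c(\bar{X}^{\varepsilon}_s)\bigr]\,ds\right|^2.
\]

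To bound the displayed right-hand side, the key input is Lemma \ref{Lemma6.2}. Since $b(x,\cdot)$ is Lipschitz uniformly in $x$ by \eqref{A21} and $\bar b(t,x)-\bar b_c(x)=\int_{\RR^m}b(x,y)[\mu^x_t-\mu^x](dy)$, a pointwise application of \eqref{SErgodicity} (conditionally on $\bar{X}^{\varepsilon}_s$, which is independent of $W^2$) gives
\[
\bigl|\bar b(s/\varepsilon,\bar{X}^{\varepsilon}_s)-\bar b_c(\bar{X}^{\varepsilon}_s)\bigr|\leq C_\beta\bigl(1+|\bar{X}^{\varepsilon}_s|\bigr)\psi_\varepsilon(s),\quad \psi_\varepsilon(s):=e^{-\beta\alpha s/\varepsilon}+\left(\int_0^{s/\varepsilon}e^{-2\beta\alpha(s/\varepsilon-r)}\phi^2(r)\,dr\right)^{1/2}.
\]
Applying Minkowski's integral inequality and the second moment bound $\EE|\bar{X}^{\varepsilon}_s|^2\leq C_T(1+|x|^2)$ from Lemma \ref{PMA}, then performing the change of variables $s=\varepsilon u$, converts the integral into
\[
\varepsilon\int_0^{T/\varepsilon}\left[e^{-\beta\alpha u}+\left(\int_0^{u}e^{-2\beta\alpha(u-r)}\phi^2(r)\,dr\right)^{1/2}\right]\,du.
\]
The first summand contributes at most $\varepsilon/(\beta\alpha)$, which upon squaring is a constant multiple of $\varepsilon^2$ and is absorbed into $\varepsilon^2\sup_{0\leq t\leq T}|\Lambda_\gamma(t/\varepsilon)|^2$ (using $|\Lambda_\gamma(0)|^2>0$); the second summand yields exactly the first term in the stated bound.

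The only real obstacle is making the conditional use of Lemma \ref{Lemma6.2} rigorous when the point $x$ is replaced by the random variable $\bar{X}^{\varepsilon}_s$; this is clean because $\bar{X}^{\varepsilon}_s$ is adapted to the filtration generated by $W^1$ and Lemma \ref{Lemma6.2} gives a deterministic pointwise estimate whose right-hand side is measurable and integrable in $x$. Beyond that, combining the two contributions from $X^{\varepsilon}_t-\bar{X}^{\varepsilon}_t$ and $\bar{X}^{\varepsilon}_t-\bar{X}_t$ by the triangle inequality immediately delivers the claimed estimate.
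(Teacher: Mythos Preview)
Your proposal is correct and follows essentially the same route as the paper's proof: a triangle-inequality splitting through $\bar X^{\varepsilon}$, invoking Theorem \ref{main result 1} for $X^{\varepsilon}-\bar X^{\varepsilon}$, and for $\bar X^{\varepsilon}-\bar X$ a Gr\"onwall reduction to $\EE\big(\int_0^T|\bar b(s/\varepsilon,\bar X^{\varepsilon}_s)-\bar b_c(\bar X^{\varepsilon}_s)|\,ds\big)^2$, which is then controlled via the deterministic pointwise estimate \eqref{barbtbarb} (i.e.\ Lemma \ref{Lemma6.2} applied to $b(x,\cdot)$), the moment bound \eqref{EAVE}, and the substitution $s\mapsto \varepsilon u$. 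The one point you flag as an ``obstacle'' --- replacing $x$ by the random $\bar X^{\varepsilon}_s$ --- is in fact a non-issue, since \eqref{SErgodicity} (and hence \eqref{barbtbarb}) is a deterministic inequality valid for every fixed $x$, so no conditioning is needed; you simply plug in $\bar X^{\varepsilon}_s(\omega)$ pointwise and then take expectations.
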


\begin{proof}
According  to Assumption \ref{B1} and Lemma \ref{Lemma6.2}, for $\beta\in (0,1)$, there exists $C_{\beta}>0$ such that
\begin{equation}\label{barbtbarb}\begin{split}
 |\bar{b}(t,x)-\bar{b}_c(x) |
\leq & \left|\int_{\RR^m}b(x,y)\,\mu^x_t(dy)-\int_{\RR^m}b(x,y)\,\mu^x(dy) \right|  \\
\leq &C_{\beta}(1+|x|)\left[e^{-\beta\alpha t}+\left(\int^t_0 e^{-2\beta\alpha (t-r)}\phi^2(r)\,dr\right)^{1/2}\right].
\end{split}\end{equation}
On the other hand, let $\{\bar X_t^\varepsilon\}_{t\ge0}$ be the solution to the SDE \eqref{AVE}. Then,
by \eref{Lipbarb} and \eref{A21}, for any $t\in [0,T]$,
$$
\bar X_{t}^{\varepsilon}-\bar{X}_{t}= \int_{0}^{t}(\bar{b}(s/\varepsilon,\bar{X}^{\varepsilon}_{s})-\bar{b}_c(\bar{X}_{s}))\,ds
+\int_{0}^{t}(\sigma(\bar{X}^{\varepsilon}_{s})-\sigma(\bar{X}_{s}))\,dW^1_s,
$$
which implies that for all $t\in [0,T]$,
\begin{align*}
\sup_{s\in [0,t]}\EE |\bar X_s^\varepsilon-\bar{X}_{s} |^{2}\leq &C\EE\Big(\int_0^t |\bar{b}(s/\varepsilon,\bar{X}^{\varepsilon}_{s})-\bar{b}_c(\bar{X}^{\varepsilon}_{s}) |\, ds\Big)^2+C_T\EE\int_0^t |\bar{b}_c(\bar{X}^{\varepsilon}_{s})-\bar{b}_c(\bar{X}_{s}) |^2\,ds\\
&+C\int_0^t \EE \|\sigma(\bar{X}^{\varepsilon}_s)-\sigma(\bar{X}_s) \|^2\,ds\\
\leq&C\EE\Big(\int_0^t |\bar{b}(s/\varepsilon,\bar{X}^{\varepsilon}_{s})-\bar{b}_c(\bar{X}^{\varepsilon}_{s}) |\,ds\Big)^2+C_T\int_0^t\EE |\bar X_s^\varepsilon-\bar{X}_{s} |^{2}\,d s.
\end{align*}

Hence, according to \eref{barbtbarb}, \eref{EAVE} and the Gronwall inequality, for any $\beta\in (0,1)$,
\begin{align*}
\sup_{t\in [0,T]}\EE |\bar X_t^\varepsilon-\bar{X}_{t} |^2 &\leq  C_T\EE\left(\int_0^T|\bar{b}(s/\varepsilon,\bar{X}^{\varepsilon}_{s})-\bar{b}_c(\bar{X}^{\varepsilon}_{s})|\,ds\right)^2\\
&\leq  C_{T,\beta}(1+|x|^{2}+|y|^{2})\left[\int_0^T \left(e^{-{\beta\alpha s}/{\varepsilon}}+\left(\int^{s/\varepsilon}_0 e^{-2\beta\alpha (s/\varepsilon-r)}\phi^2(r)\,dr\right)^{1/2}\right)\,ds \right]^2 \\
&\leq  C_{T,\beta}\varepsilon^2(1+|x|^{2}+|y|^{2})\left\{1+\int_0^{T/\varepsilon}\left[\left(\int^{s}_0 e^{-2\beta\alpha (s-r)}\phi^2(r)\,dr\right)^{1/2}\right]\,ds \right\}^2.
\end{align*}
Therefore, the desired assertion immediately follows from the estimate above and \eref{R1}.
\end{proof}

\subsection{Weak averaging principle: convergent coefficients case}

In this subsection, suppose that $f$ and $g$ fulfill Assumptions {\rm\ref{B1}} and {\rm\ref{A4}}, and that $b$ and $\sigma$ satisfy Assumption {\rm\ref{A3}}. Consider the averaged SDE:
\begin{equation}
d\bar{X}_{t}=\bar{b}_c(\bar{X}_t)\,dt+\bar{\sigma}_c(\bar{X}_t)\,d \bar W_t,\quad\bar{X}_{0}=x, \label{AVE22}
\end{equation}
where $\bar{b}_c(x)$ is defined in \eref{barb21}, $$\bar{\sigma}_c(x):=\left[\left(\overline{\sigma\sigma^{\ast}}\right)_{c}(x)\right]^{1/2}:=\Big[\int_{\RR^{m}}\left(\sigma\sigma^{\ast}\right)(x,y)\,\mu^x(dy)\Big]^{1/2},$$ and $\{\bar{W_t}\}_{t\ge0}$ is a $n$-dimensional standard Brownian motion.

Following the proof of \eqref{FourthDbarb}, we can obtain
\begin{align}
\sup_{t\in\RR,x\in\RR^n}\sum^4_{i=1}[\|\partial^i_x \bar{b}_c(x)\|+\|\partial^{i}_x\bar{\sigma}_c(x)\|]\leq C,\label{FourthDbarsigmA1}
\end{align}
and so the SDE \eref{AVE22} has a unique solution $\{\bar X^{x}_t\}_{t\ge 0}$. Moreover, we can verify that for any $t\ge0$, $\bar X^{x}_t$ is fourth differentiable in the mean square sense with respect to $x$, and, for any $T>0$, there exists $C_T>0$ such that
\begin{align}\label{D barX2}
\sup_{t\in [0,T]}\sum^{4}_{i=1}\EE \|\partial^i_x \bar X^{x}_t\|^4\leq C_T.
		\end{align}
For any $\varphi\in C^{4}_b(\RR^{n})$, consider the Kolmogorov equation:
\begin{equation}
\label{KE2}
\displaystyle
			\partial_t u(t,x)=\bar{\mathscr{L}}_c u(t,x), \quad u(0, x)=\varphi(x),
		 \end{equation}
where $\bar{\mathscr{L}}_c$ is the generator of \eref{AVE22}, i.e.,
$$
\bar{\mathscr{L}}_c\varphi(x):= \langle \bar{b}_c(x), \nabla \varphi(x) \rangle+\frac{1}{2}\text{Tr} [\left(\overline{\sigma\sigma^{\ast}}\right)_c(x)\nabla^2\varphi(x)].
$$
It is easy to see that \eref{KE2} has a unique solution, which is given by $u(t,x)=\EE\varphi(\bar{X}^{x}_t)$ for all $t\ge 0.$ According to $\varphi\in C^{4}_b(\RR^{n})$ and \eref{D barX2}, for any $T>0$,
\begin{align}
\sup_{0\leq t\leq T, x\in\RR^n}\sum^{4}_{i=1}\|\partial^i_{x}u(t,x)\|\leq C_{T}.\label{F4.18}
\end{align}

Now, we state our fourth main result in this paper.

\begin{theorem} \label{main result 4}
Suppose that Assumptions {\rm\ref{B1}}, {\rm\ref{A3}} and {\rm\ref{A4}} hold. Let $\{(X_t^\varepsilon, Y_t^\varepsilon) \}_{t\ge0}$ and $\{\bar X_t\}_{t\ge0}$ be the solutions to \eqref{Equation} and \eqref{AVE22} respectively. Then for any $T>0$, $\beta\in(0,1)$ and $\varphi\in C^4_b(\RR^n)$, there is a constant $C_{\varphi,T,\beta}>0$ such that for all $\varepsilon>0$,
\begin{equation}\label{AVWO2}\begin{split}
&\sup_{0\leq t\leq T}|\EE \varphi(X^{\varepsilon}_t)-\EE \varphi(\bar{X}_t)|\\
&\leq C_{\varphi,T,\beta}\left(1+|x|+|y|\right)\varepsilon\Big\{\int_0^{T/\varepsilon}\Big[\Big(\int^{s}_0 e^{-2\beta\alpha (s-r)}\phi^2(r)\,dr\Big)^{1/2}\Big]\,ds+\sup_{t\in [0,T]}\Lambda_\gamma(t/\varepsilon)\Big\}.
\end{split}\end{equation}
\end{theorem}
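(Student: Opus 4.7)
The plan is to insert the $\varepsilon$-dependent averaged process $\{\bar X^\varepsilon_t\}_{t\ge 0}$ of \eqref{AVE3} as an intermediate bridge and apply the triangle inequality
$$|\EE\varphi(X^\varepsilon_t)-\EE\varphi(\bar X_t)|\le |\EE\varphi(X^\varepsilon_t)-\EE\varphi(\bar X^\varepsilon_t)|+|\EE\varphi(\bar X^\varepsilon_t)-\EE\varphi(\bar X_t)|.$$
The first term is controlled by Theorem \ref{main result 2} directly (since Assumption \ref{A4} is only needed for the second piece, while Assumptions \ref{B1} and \ref{A3} are in force), producing $C_{\varphi,T}(1+|x|^2+|y|^2)\varepsilon\sup_{t\in[0,T]}\Lambda_\gamma(t/\varepsilon)$, which matches the second summand in \eqref{AVWO2}.

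For the second term I would use the Kolmogorov equation \eqref{KE2} of the time-homogeneous limit SDE. Set $u(t,x)=\EE\varphi(\bar X^{x}_t)$, so that $\partial_t u=\bar{\mathscr{L}}_c u$ with $u(0,\cdot)=\varphi$, and $\nabla u$ and $\nabla^2 u$ are uniformly bounded on $[0,T]\times\RR^n$ by \eqref{F4.18}. Fix $t\in(0,T]$ and apply It\^o's formula to the process $s\mapsto u(t-s,\bar X^\varepsilon_s)$, whose drift at $s$ is $-\partial_t u(t-s,\cdot)(\bar X^\varepsilon_s)+\bar{\mathscr{L}}^{\varepsilon}_s u(t-s,\cdot)(\bar X^\varepsilon_s)$ with $\bar{\mathscr{L}}^{\varepsilon}_s$ given by \eqref{DebarL}. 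Taking expectation cancels the martingale part, and $\partial_t u=\bar{\mathscr{L}}_c u$ collapses the difference to
$$\EE\varphi(\bar X^\varepsilon_t)-\EE\varphi(\bar X_t)=\EE\int_0^t\big[(\bar{\mathscr{L}}^{\varepsilon}_s-\bar{\mathscr{L}}_c)u(t-s,\cdot)\big](\bar X^\varepsilon_s)\,ds.$$

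Only the coefficient differences $\bar b(s/\varepsilon,z)-\bar b_c(z)$ and $\overline{\sigma\sigma^\ast}(s/\varepsilon,z)-(\overline{\sigma\sigma^\ast})_c(z)$ remain, evaluated at $z=\bar X^\varepsilon_s$. Each has the form $\mu^z_{s/\varepsilon}(\psi)-\mu^z(\psi)$ with $\psi$ equal to $b(z,\cdot)$ or an entry of $(\sigma\sigma^\ast)(z,\cdot)$; by \eqref{ConA32} these $\psi$ are Lipschitz in $y$ with a constant uniform in $z$, so Lemma \ref{Lemma6.2} applies and gives, for every $\beta\in(0,1)$,
$$|\bar b(s/\varepsilon,z)-\bar b_c(z)|+\|\overline{\sigma\sigma^\ast}(s/\varepsilon,z)-(\overline{\sigma\sigma^\ast})_c(z)\|\le C_\beta(1+|z|)\bigg[e^{-\beta\alpha s/\varepsilon}+\bigg(\int_0^{s/\varepsilon}e^{-2\beta\alpha(s/\varepsilon-r)}\phi^2(r)\,dr\bigg)^{1/2}\bigg].$$
Combining this with the derivative bounds for $u$, the moment estimate $\sup_{s\in[0,T]}\EE|\bar X^\varepsilon_s|\le C_T(1+|x|)$ (established exactly as in Lemma \ref{PMA} using the linear growth of $\bar b$ from \eqref{linear barb} and boundedness of $\bar\sigma$), and the substitution $u=s/\varepsilon$, the second term is at most
$$C_{\varphi,T,\beta}(1+|x|)\varepsilon\bigg[1+\int_0^{T/\varepsilon}\bigg(\int_0^s e^{-2\beta\alpha(s-r)}\phi^2(r)\,dr\bigg)^{1/2}\,ds\bigg].$$
The additive $1$ (from integrating $e^{-\beta\alpha u}$) can be absorbed into the $\sup_{t\in[0,T]}\Lambda_\gamma(t/\varepsilon)$ factor already appearing in the first piece, producing \eqref{AVWO2}.

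The main subtlety I anticipate is recognising that the correct route in the second step bypasses any nonautonomous Poisson equation: the delicate Poisson-equation work is entirely hidden inside Theorem \ref{main result 2}, while the comparison $\bar X^\varepsilon\leftrightarrow\bar X$ needs only the Kolmogorov equation of the $\varepsilon$-free limit together with Lemma \ref{Lemma6.2} applied separately to $b$ and to each entry of $\sigma\sigma^\ast$. Writing the generator difference in terms of $\sigma\sigma^\ast$ rather than $\bar\sigma$ is essential here, since it avoids any square-root Lipschitz step for the diffusion matrix; non-degeneracy \eqref{NonD} enters only indirectly, through the regularity bound \eqref{F4.18} on $u$.
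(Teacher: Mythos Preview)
Your proposal is correct and follows essentially the same route as the paper's proof: both split via the intermediate process $\bar X^\varepsilon$, invoke Theorem \ref{main result 2} for $|\EE\varphi(X^\varepsilon_t)-\EE\varphi(\bar X^\varepsilon_t)|$, and then compare $\bar X^\varepsilon$ with $\bar X$ by applying It\^o's formula to $s\mapsto u(t-s,\bar X^\varepsilon_s)$ with $u$ the Kolmogorov solution \eqref{KE2}, reducing to the coefficient differences $\bar b(s/\varepsilon,\cdot)-\bar b_c$ and $\overline{\sigma\sigma^\ast}(s/\varepsilon,\cdot)-(\overline{\sigma\sigma^\ast})_c$ controlled via Lemma \ref{Lemma6.2} (the paper records the drift estimate as \eqref{barbtbarb} and its diffusion analogue as \eqref{barsigamtbarsigma}). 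Your observation that the additive $1$ is absorbed by $\sup_{t\in[0,T]}\Lambda_\gamma(t/\varepsilon)\ge\Lambda_\gamma(0)>0$ is exactly how the paper closes the argument.
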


\begin{proof}
Following the proof of \eref{barbtbarb}, we can  obtain
\begin{align}
\left\|\left(\overline{\sigma \sigma^{\ast}}\right)(t,x)-\left(\overline{\sigma \sigma^{\ast}}\right)_c(x)\right\|
\leq C_{\beta}(1+|x|)\Big[e^{-\beta\alpha t}+\Big(\int^t_0 e^{-2\beta\alpha (t-r)}\phi^2(r)\,dr\Big)^{1/2}\Big].\label{barsigamtbarsigma}
\end{align}

Fix $t\le T$ and define $\tilde{u}^{t}(s,x)=u(t-s,x)$ for $0\le s\le t$. Let $\{\bar{X}^{\varepsilon}_t\}_{t\ge0}$ be the solution to the SDE \eqref{AVE3}. It\^{o}'s formula implies
	\begin{align*}
		\tilde{u}^{t}(t, \bar{X}^{\varepsilon}_t)=\tilde{u}^{t}(0, x)+\int^t_0 (\partial_s\tilde{u}^{t}(s, \bar{X}^{\varepsilon}_s)+\bar{\mathscr{L}}^{\varepsilon}_s\tilde{u}^{t}(s, \cdot)(\bar X^{\varepsilon}_s) )\,ds+\tilde{M}'_t,
	\end{align*}
where $\bar{\mathscr{L}}^{\varepsilon}_s$ is defined by \eref{DebarL} and $$\tilde{M}'_t:=\int^t_0 \langle \partial_{x}\tilde{u}^{t}(s,\bar X_{s}^{\varepsilon}),\bar\sigma(s/\varepsilon,\bar X_{s}^{\varepsilon})\,d\bar W_{s} \rangle$$  is a local martingale.

Note that $\tilde{u}^{t}(t, \bar X^{\varepsilon}_t)=\varphi(\bar X^{\varepsilon}_t)$, $\tilde{u}^{t}(0, x)=\EE\varphi(\bar{X}_t)$, and, by \eqref{KE2},
$$\partial_s \tilde{u}^{t}(s, x)=-\bar{\mathscr{L}}_c\tilde{u}^{t}(s,\cdot)(x).$$
\eref{barbtbarb}, \eref{barsigamtbarsigma} and \eref{F4.18} yield that
\begin{equation*}
\begin{split}
\sup_{t\in [0,T]} |\EE\varphi(\bar X^{\varepsilon}_{t})-\EE\varphi(\bar{X}_{t}) |
 =&\sup_{t\in [0,T]}\left|\EE\int^t_0 -\bar{\mathscr{L}}_c \tilde{u}^{t}(s, \cdot)(\bar X^{\varepsilon}_s )\,ds+\EE\int^t_0 \bar{\mathscr{L}}^{\varepsilon}_s\tilde{u}^{t}(s,\cdot)(\bar X^{\varepsilon}_s)\,ds\right| \\
=&\sup_{t\in [0,T]}\Big|\EE \int^t_0 \Big( \langle \bar b(s/\varepsilon,\bar X^{\varepsilon}_s) - \bar{b}_c(\bar X^{\varepsilon}_s),\partial_x \tilde{u}^{t}(s,\bar X^{\varepsilon}_s ) \rangle \\
		&\qquad\qquad +\frac{1}{2}\text{Tr}\big[((\overline{\sigma \sigma^{\ast}})(s/\varepsilon,\bar X^{\varepsilon}_s) - (\overline{\sigma\sigma^{\ast}})_c(\bar X^{\varepsilon}_s))\partial_x^{2}\tilde{u}^{t}(s, \bar X^{\varepsilon}_s)\big] \Big) \,ds\Big| \\
\leq & C_{T,\beta}(1+|x|+|y|) \int_0^T \Big[e^{-{\beta\alpha s}/{\varepsilon}} + \Big( \int^{s/\varepsilon}_0 e^{-2\beta\alpha (s/\varepsilon-r)}\phi^2(r)\,dr\Big)^{1/2}\Big] \,ds \\
\leq & C_{T,\beta}(1+|x|+|y|)\varepsilon\Big\{1+\int_0^{T/\varepsilon}\Big(\int^{s}_0 e^{-2\beta\alpha (s-r)}\phi^2(r)\,dr\Big)^{1/2}\,ds \Big\}.
	\end{split}\end{equation*}
Hence, the desired assertion immediately follows from the estimate above and \eref{AVWO}.
\end{proof}

\section{Time-inhomogeneous multi-scale SDEs: periodic coefficients case}

To ensure that the averaged SDE does not depend on the parameter $\varepsilon$, in this section we concentrate on the case that $f$ and $g$ are periodic. Such kind condition differs significantly from Assumption \ref{A4}. Specifically, we will impose the following condition:

\begin{conditionB}\label{A5}
Assume that $f(\cdot,x,y)$ and $g(\cdot,x,y)$ are $\tau$-periodic for some $\tau>0$, that is, for any $t\in \RR$, $x\in \RR^n$ and $y\in \RR^m$,
\begin{align}
f(t+\tau,x,y)=f(t,x,y),\quad g(t+\tau,x,y)=g(t,x,y).\label{A51}
\end{align}
\end{conditionB}

We start with two simple lemmas.

\begin{lemma}\label{P5.1}
Let $h$ be a  bounded and $\tau$-periodic function on $\RR$. Then, for any $T>0$,
\begin{align*}
\sup_{a\in\RR}\left|\frac{1}{T}\int^{T+a}_a h(s)\,ds-\frac{1}{\tau}\int^{\tau}_0h(s)\,ds\right|\leq \frac{2\tau M}{T},
\end{align*}
where $M=\sup_{s\in [0,\tau]}|h(s)|$.
\end{lemma}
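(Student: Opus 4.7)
The plan is a direct decomposition argument. Write $T = k\tau + r$ with $k := \lfloor T/\tau \rfloor \in \NN$ and $r \in [0,\tau)$, and set $I := \int_0^\tau h(s)\,ds$. By $\tau$-periodicity, for every $a \in \RR$ the integral of $h$ over any interval of length $\tau$ equals $I$; in particular $\int_a^{a+k\tau} h(s)\,ds = kI$. This yields the exact identity
\begin{equation*}
\int_a^{T+a} h(s)\,ds \;=\; kI + \int_{a+k\tau}^{a+T} h(s)\,ds.
\end{equation*}

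Next, I divide by $T$ and subtract $I/\tau$, writing $k/T - 1/\tau = (k\tau - T)/(T\tau) = -r/(T\tau)$:
\begin{equation*}
\frac{1}{T}\int_a^{T+a} h(s)\,ds - \frac{1}{\tau}\int_0^\tau h(s)\,ds \;=\; -\,\frac{r I}{T\tau} \;+\; \frac{1}{T}\int_{a+k\tau}^{a+T} h(s)\,ds.
\end{equation*}
The two pieces are then each bounded uniformly in $a$: since $|I| \le \tau M$ and $r \le \tau$, the first piece is at most $rM/T \le \tau M/T$; and since the tail interval has length $r \le \tau$ and $|h|\le M$, the second piece is also at most $rM/T \le \tau M/T$. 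Adding the two estimates gives the claim $2\tau M / T$, uniformly in $a \in \RR$.

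The argument is essentially bookkeeping and I do not anticipate any obstacle. The only point to be careful about is that the bound must be truly uniform in $a$, which is why one uses the identity $\int_a^{a+k\tau} h = kI$ (valid for any real starting point $a$ by periodicity, not just for $a$ a multiple of $\tau$).
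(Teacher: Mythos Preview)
Your proof is correct and follows essentially the same approach as the paper: decompose $T = k\tau + r$ with $k = \lfloor T/\tau\rfloor$, use periodicity to write $\int_a^{a+k\tau} h = kI$, and bound the two resulting terms (the mismatch $|k/T - 1/\tau|\cdot|I|$ and the tail integral over an interval of length $r$) each by $\tau M/T$. The only cosmetic difference is that you write out the exact identity $k/T - 1/\tau = -r/(T\tau)$, whereas the paper records only the bound $|k/T - 1/\tau|\le 1/T$.
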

\begin{proof}
Note that for any $T>0$, $\left|\left[\frac{T}{\tau}\right]/T - 1/\tau \right|\leq 1/T.$
Since $h$ is $\tau$-periodic, for any $a\in\RR$,
\begin{align*}
\left| \frac{1}{T} \int_{a}^{a+T} h(s) \, ds - \frac{1}{\tau} \int_{0}^{\tau} h(s) \, ds \right|
& \leq\left| \frac{\left[\frac{T}{\tau}\right]}{T} \int_{0}^{\tau} h(s) \, ds - \frac{1}{\tau} \int_{0}^{\tau} h(s) \, ds \right| + \frac{1}{T} \int_{a + \lfloor \frac{T}{\tau} \rfloor \tau}^{a+T} |h(s)| \, ds \\
&\leq  \frac{\tau M}{T} + \frac{\tau M}{T} =\frac{2\tau M}{T}.
\end{align*}
The proof is complete.
\end{proof}

\begin{lemma}\label{Le5.2}
Suppose that Assumptions {\rm\ref{B1}} and {\rm\ref{A5}} hold. Then, the evolution system of measures $\{\mu^x_t\}_{t\in\RR}$ given in Proposition $\ref{P:2.6}$ is $\tau$-periodic, i.e., for any $t\in \RR$,
$\mu^x_{t+\tau}=\mu^x_t$.
\end{lemma}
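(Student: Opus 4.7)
The plan is to exploit the $\tau$-periodicity of the coefficients to deduce a time-shift invariance of the semigroup $\{P^x_{s,t}\}_{t\ge s}$, and then to invoke the uniqueness part of Proposition \ref{P:2.6}. The key intermediate identity I would establish is
$$
P^x_{s+\tau,t+\tau}\varphi(y) = P^x_{s,t}\varphi(y),\qquad s\le t,\ y\in\RR^m,\ \varphi\in C_b(\RR^m).
$$
To get this, I would introduce the shifted two-sided Brownian motion $\tilde W^2_u := \hat W^2_{u+\tau} - \hat W^2_\tau$, which has the same law as $\hat W^2$. Performing the change of variables $r = u+\tau$ in the stochastic integral equation defining $\{Y^{s+\tau,x,y}_{u+\tau}\}_{u\ge s}$ and invoking the periodicity relation \eqref{A51}, one checks that this shifted process satisfies the same SDE \eqref{FrozenE} as $\{Y^{s,x,y}_u\}_{u\ge s}$, driven by $\tilde W^2$ in place of $\hat W^2$. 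Strong uniqueness for \eqref{FrozenE} together with the equality in law of $\tilde W^2$ and $\hat W^2$ then yields equality of one-dimensional distributions, hence the claimed semigroup identity.

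Next I would set $\nu^x_t := \mu^x_{t+\tau}$ and verify that $\{\nu^x_t\}_{t\in\RR}$ is itself an evolution system of measures for $\{P^x_{s,t}\}_{t\ge s}$. Applying the evolution-system relation \eqref{ESM} at times $s+\tau,t+\tau$ and using the shift identity,
$$
\int_{\RR^m} P^x_{s,t}\varphi(y)\,\nu^x_s(dy) = \int_{\RR^m} P^x_{s+\tau,t+\tau}\varphi(y)\,\mu^x_{s+\tau}(dy) = \int_{\RR^m}\varphi(y)\,\mu^x_{t+\tau}(dy) = \int_{\RR^m}\varphi(y)\,\nu^x_t(dy).
$$
Moreover, the uniform bound $\sup_{t\in\RR}\int_{\RR^m}|z|^2\,\mu^x_t(dz)\le C(1+|x|^2)$ coming from \eqref{F3.2} is trivially inherited by $\{\nu^x_t\}_{t\in\RR}$, so in particular $\sup_{t\in\RR}\int_{\RR^m}|z|\,\nu^x_t(dz)<\infty$, which is the hypothesis needed to invoke uniqueness.

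Finally, the uniqueness assertion in Proposition \ref{P:2.6} forces $\nu^x_t = \mu^x_t$ for every $t\in\RR$ and $x\in\RR^n$, which, unpacking the definition of $\nu^x_t$, is precisely the desired periodicity $\mu^x_{t+\tau} = \mu^x_t$. The only mildly delicate step is the rigorous justification of the shift identity—essentially a change of variables for a stochastic integral combined with the invariance in law of Wiener measure under deterministic time-shifts—but once Assumption \ref{A5} is in place this is bookkeeping and requires no new estimates beyond those already used to construct $\{\mu^x_t\}_{t\in\RR}$.
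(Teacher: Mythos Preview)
Your proof is correct. Both you and the paper establish the same key semigroup shift identity $P^x_{s+\tau,t+\tau}=P^x_{s,t}$ (you via a direct time-shift of the driving Brownian motion, the paper via the Kolmogorov equation characterization of $P^x_{s,t}\varphi$). The only difference is in the concluding step: the paper directly uses the limit representation $\mu^x_t(\varphi)=\lim_{s\to-\infty}P^x_{s,t}\varphi(0)$ coming from \eqref{F3.2}, writing
\[
\mu^x_{t+\tau}(\varphi)=\lim_{s\to-\infty}P^x_{s+\tau,t+\tau}\varphi(0)=\lim_{s\to-\infty}P^x_{s,t}\varphi(0)=\mu^x_t(\varphi),
\]
whereas you instead verify that $\{\mu^x_{t+\tau}\}_{t\in\RR}$ is itself an evolution system of measures with finite first moment and then invoke the uniqueness clause of Proposition~\ref{P:2.6}. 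Both routes are equally valid; the paper's is marginally shorter since it avoids re-checking the evolution property, while yours has the virtue of making explicit use of the uniqueness statement already on hand.
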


\begin{proof} The proof follows from that of \cite[Theorem 4.1]{DT1995}. Let $\varphi$ be a continuous and bounded function.
Consider the following the nonautonomous Kolmogorov equation
$$
			\partial_s v(s,t,y)=-\mathscr{L}^x_2(s) v(s,t,y),\,\,v(t,t, y)=\varphi(y),\quad s\in[0, t],
$$
where $\mathscr{L}^x_2(s)$ is defined in \eref{L_2}. It is easy to see that $v(s,t,y)=P^x_{s,t}\varphi(y)$.
Since $f$ and $g$ are $\tau$-periodic, $P^x_{s+\tau,t+\tau}\varphi(y)=P^x_{s,t}\varphi(y)$ for $t\geq s$.
Hence, noting that the law of $Y^{s,x,0}_t$ convergence weakly to $\mu^x_t$ by \eref{F3.2},
\begin{align*}
\int_{\RR^m}\varphi(y)\mu^x_{t+\tau}(dy)=\lim_{s\to -\infty}P^x_{s+\tau, t+\tau}\varphi(0)
=\lim_{s\to -\infty}P^x_{s,t}\varphi(0)=\int_{\RR^m}\varphi(y)\mu^x_{t}(dy).
\end{align*}
Hence, $\{\mu^x_t\}_{t\in\RR}$ is $\tau$-periodic.
\end{proof}

\subsection{Strong averaging principle: periodic coefficients case}
In this subsection, we suppose that $f$ and $g$ fulfill Assumptions {\rm\ref{B1}} and {\rm\ref{A5}}, and $b$ and $\sigma=\sigma(x)$ fulfill Assumption {\rm\ref{A2}}. Consider the following averaged SDE:
\begin{equation}
d\bar{X}_{t}=\bar{b}_p(\bar{X}_t)\,dt+\sigma(\bar{X}_t)\,d W^1_t,\quad\bar{X}_{0}=x, \label{AVE31}
\end{equation}
where
\begin{eqnarray}
\bar{b}_p(x):=\frac{1}{\tau}\int^{\tau}_0\bar{b}(t,x)\,dt=\displaystyle\frac{1}{\tau}\int^{\tau}_0\int_{\RR^{m}}b(x,y)\,\mu^x_t(dy)\,dt.\label{barb31}
\end{eqnarray}

By \eref{barc1}, it is easy to see that for any $x_1,x_2\in \RR^n$,
\begin{align}
|\bar{b}_p(x_1)-\bar{b}_p(x_2)|\leq C|x_1-x_2|,\label{Lipsbarb}
\end{align}
and so the SDE \eref{AVE31} admits a unique solution $\{\bar{X}_t\}_{t\ge0}$. Moreover, by \eqref{Lipsbarb} and Assumption {\rm\ref{A2}}, for any $T>0$, there exists $C_T>0$ such that
 \begin{align}
\sup_{0\leq s\leq t\leq T}\EE|\bar{X}_{t}-\bar{X}_{s}|^2\leq C_T(1+|x|^2)|t-s|.\label{F5.6}
\end{align}

\begin{theorem}\label{main result 5}
Suppose that Assumptions {\rm\ref{B1}}, {\rm\ref{A2}} and {\rm\ref{A5}} hold. Let $\{(X_t^\varepsilon, Y_t^\varepsilon) \}_{t\ge0}$ and $\{\bar X_t\}_{t\ge0}$ be the solutions to the SDEs  \eqref{Equation} and \eqref{AVE31} respectively. Then, for any $T>0$, there exists $C_T>0$ such that for any $(x,y)\in\RR^n\times\RR^m$ and $\varepsilon>0$,
$$
\sup_{t\in [0, T]}\EE |X_{t}^{\varepsilon}-\bar{X}_{t} |^2\leq  C_{T}\varepsilon^2(1  +  |x|^{4}  +  |y|^{4})\Big[\varepsilon^{-4/3}  +  \sup_{0\leq t\leq T}|\Lambda_{\gamma}(t/\varepsilon)|^2  +  \int^{T/\varepsilon}_0 \alpha(s)\Lambda^2(s)\,ds\Big]. $$
\end{theorem}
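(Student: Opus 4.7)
The plan is to compare $X^{\varepsilon}$ with $\bar{X}$ through the intermediate process $\bar X^\varepsilon$ solving the time-inhomogeneous averaged SDE \eqref{AVE}, via the triangle inequality
$$\EE|X^\varepsilon_t-\bar X_t|^2 \leq 2\EE|X^\varepsilon_t-\bar X^\varepsilon_t|^2 + 2\EE|\bar X^\varepsilon_t-\bar X_t|^2.$$
Theorem \ref{main result 1} already controls the first summand by the bound involving $\sup_{t\in[0,T]}|\Lambda_\gamma(t/\varepsilon)|^2$ and $\int_0^{T/\varepsilon}\alpha(s)\Lambda^2(s)\,ds$, so the whole task reduces to proving $\sup_{t\in[0,T]}\EE|\bar X^\varepsilon_t-\bar X_t|^2\leq C_T(1+|x|^2)\varepsilon^{2/3}$, which, rewritten as $C_T\varepsilon^2\cdot\varepsilon^{-4/3}$, is exactly the new term in the statement.

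For the comparison of $\bar X^\varepsilon$ and $\bar X$, I would subtract \eqref{AVE31} from \eqref{AVE}. Since both equations share the same Brownian motion $W^1$, and $\sigma(\cdot)$ together with $\bar b_p$ are Lipschitz (the latter by \eqref{Lipsbarb}), a standard Gronwall argument reduces the problem to bounding
$$\EE\Big(\int_0^t[\bar b(s/\varepsilon,\bar X_s)-\bar b_p(\bar X_s)]\,ds\Big)^2$$
uniformly in $t\in[0,T]$. The essential input is Lemma \ref{Le5.2}: the $\tau$-periodicity of $\mu^x_t$ transfers to $\bar b(\cdot,x)$, whose mean over one period equals $\bar b_p(x)$, so the integrand is, in the frozen-$x$ sense, a periodic mean-zero oscillation.

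I would then apply a Khasminskii-style time discretization. Fix $\delta\in(\tau\varepsilon,T]$, set $t_k=k\delta$, and decompose the integral as $\sum_k(A_k+B_k)$ with
\begin{align*}
A_k&=\int_{t_k}^{t_{k+1}}\left([\bar b(s/\varepsilon,\bar X_s)-\bar b(s/\varepsilon,\bar X_{t_k})]-[\bar b_p(\bar X_s)-\bar b_p(\bar X_{t_k})]\right)\,ds,\\
B_k&=\int_{t_k}^{t_{k+1}}[\bar b(s/\varepsilon,\bar X_{t_k})-\bar b_p(\bar X_{t_k})]\,ds.
\end{align*}
The drift-freezing error $A_k$ is handled by Cauchy--Schwarz together with Lipschitz continuity of $\bar b(s,\cdot)$ and $\bar b_p$ and the H\"older estimate \eqref{F5.6}, which gives $\EE(\sum_kA_k)^2\leq C_T(1+|x|^2)\delta$. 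For the oscillatory part $B_k$, the change of variable $u=s/\varepsilon$ and Lemma \ref{P5.1} applied to the $\tau$-periodic mean-zero function $\bar b(\cdot,\bar X_{t_k})-\bar b_p(\bar X_{t_k})$, whose sup-norm is at most $C(1+|\bar X_{t_k}|)$ by \eqref{linear barb} and \eqref{Lipsbarb}, yield the pathwise bound $|B_k|\leq C\varepsilon(1+|\bar X_{t_k}|)$; summing over the $N=\lceil T/\delta\rceil$ intervals and using a standard second-moment estimate for \eqref{AVE31} leads to $\EE(\sum_kB_k)^2\leq C_T(1+|x|^2)\varepsilon^2/\delta^2$.

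Combining these two contributions gives a bound of the form $C_T(1+|x|^2)(\delta+\varepsilon^2/\delta^2)$, and the balanced choice $\delta=\varepsilon^{2/3}$ produces the desired $C_T(1+|x|^2)\varepsilon^{2/3}$. The main obstacle is the competing requirements on $\delta$: it must be small enough for freezing the slow component on $[t_k,t_{k+1}]$ to be a harmless approximation (controlling $A_k$), but coarse relative to the period scale $\tau\varepsilon$ so that Lemma \ref{P5.1} produces a genuine $\varepsilon$-gain on each interval (controlling $B_k$). The optimal scaling $\delta=\varepsilon^{2/3}$ satisfies $\delta\gg\tau\varepsilon$ as $\varepsilon\to0$, and this precise scaling is what generates the unusual exponent $-4/3$ appearing in the theorem.
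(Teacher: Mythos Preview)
Your proposal is correct and follows essentially the same route as the paper: the triangle inequality through the intermediate $\bar X^\varepsilon$, Theorem~\ref{main result 1} for the first half, a Gronwall reduction to the oscillatory integral $\EE\big(\int_0^t[\bar b(s/\varepsilon,\bar X_s)-\bar b_p(\bar X_s)]\,ds\big)^2$, time discretization with step $\delta$, the H\"older bound \eqref{F5.6} for the freezing error, Lemma~\ref{P5.1} (via Lemma~\ref{Le5.2}) for the periodic cancellation on each block, and the optimization $\delta=\varepsilon^{2/3}$. The only cosmetic difference is notation ($t_k=k\delta$ versus the paper's $s(\delta)=[s/\delta]\delta$) and that the paper explicitly isolates the residual incomplete interval $[[T/\delta]\delta,T]$, contributing a harmless $O(\delta^2)$ term.
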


\begin{proof} Let $\{\bar X_t^\varepsilon\}_{t\ge0}$ be the solution to the SDE \eqref{AVE}. Then, $$
\bar X_{t}^{\varepsilon}-\bar{X}_{t}= \int_{0}^{t} [\bar{b}(s/\varepsilon,\bar{X}^{\varepsilon}_{s})-\bar{b}_p(\bar{X}_{s}) ]\,ds
+\int_{0}^{t}[\sigma(\bar{X}^{\varepsilon}_{s})-\sigma(\bar{X}_{s}) ]\,dW^1_s.
$$
According to \eref{barc1} and \eref{A21}, for $t\in [0,T]$,
\begin{align*}
\sup_{s\in [0,t]}\EE|\bar X_s^\varepsilon-\bar{X}_{s}|^{2}\leq &C_T\EE\int_0^t|\bar{b}(s/\varepsilon,\bar{X}^{\varepsilon}_{s})  -  \bar{b}(s/\varepsilon,\bar{X}_{s})|^2\,ds  +  C_T\int_0^t \EE\|\sigma(\bar{X}^{\varepsilon}_s)-\sigma(\bar{X}_s)\|^2\,ds \\
&+C_T\EE\left|\int_0^t\bar{b}(s/\varepsilon,\bar{X}_{s})-\bar{b}_p(\bar{X}_{s}) \,ds\right|^2\\
\leq& C_T\int_0^t\EE|\bar X_s^\varepsilon-\bar{X}_{s}|^{2}\,ds+C_T\EE\left|\int_0^t\bar{b}(s/\varepsilon,\bar{X}_{s})-\bar{b}_p(\bar{X}_{s})\,ds\right|^2.
\end{align*}
Thus, the Gronwall inequality, \eref{barbtbarb} and \eref{EAVE} yield that
\begin{align*}
&\sup_{t\in [0,T]}\EE |\bar{X}_t^\varepsilon-\bar{X}_{t} |^{2} \\
&\leq  C_T\EE\Big|\int_0^T[\bar{b}(s/\varepsilon,\bar{X}_{s})-\bar{b}_p(\bar{X}_{s})]\,ds\Big|^2 \\
&\leq  C_T\EE\Big|\int_0^T[\bar{b}(s/\varepsilon,\bar{X}_{s(\delta)})-\bar{b}_p(\bar{X}_{s(\delta)})]\,ds\Big|^2+C_T\EE\int_0^T|\bar{b}(s/\varepsilon,\bar{X}_{s})-\bar{b}(s/\varepsilon,\bar{X}_{s(\delta)})|^2\,ds\nonumber\\
&\quad+C_T\EE\int_0^T|\bar{b}_p(\bar{X}_{s})-\bar{b}_p(\bar{X}_{s(\delta)})|^2\,ds\\
&=: \sum^3_{i=1}J_i(T),\end{align*}
where $s(\delta)=\left[{s}/{\delta}\right]\delta$ and $[s/{\delta}]$ is the integer part of $s/{\delta}$. Here, $\delta>0$ depends on $\varepsilon$, which will be chosen later.

According to \eref{barc1}, \eref{Lipsbarb} and \eref{F5.6},
\begin{align}
J_2(T)+J_3(T)\leq C_T(1+|x|^2)\delta.
\end{align}
On the other hand, since $\bar{b}(\cdot,x)$ is $\tau$-periodic by Lemma \ref{Le5.2}, Proposition \ref{P5.1}, \eqref{barc1} and \eqref{Lipsbarb} yield that
\begin{align*}
J_1(T)\leq& C_T\EE\left|\sum^{[T/\delta]-1}_{k=0}    \int^{(k+1)\delta}_{k\delta}    [\bar{b}(s/\varepsilon,\bar{X}_{k\delta})  -  \bar{b}_p(\bar{X}_{k\delta})]  \,ds
\right|^2  +  C_T\EE\Big|\int^{T}_{[T/\delta]\delta}  [\bar{b}(s/\varepsilon,\bar{X}_{k\delta})  -  \bar{b}_p(\bar{X}_{k\delta})]  \,ds\Big|^2\\
\leq& C_T\EE\left|\sum^{[T/\delta]-1}_{k=0}\delta\left(\frac{\varepsilon}{\delta}\int^{\frac{k\delta}{\varepsilon}+\frac{\delta}
{\varepsilon}}_{\frac{k\delta}{\varepsilon}}\bar{b}(s,\bar{X}_{k\delta})\,ds-\bar{b}_p(\bar{X}_{k\delta})\right)\right|^2+C_T(1+|x|^2)\delta^2\\
\leq& C_T\varepsilon^2\EE\left|\sum^{[T/\delta]-1}_{k=0}(1+|\bar{X}_{k\delta}|)\right|^2+C_T(1+|x|^2)\delta^2\\
\leq& C_T(1+|x|^2)\frac{\varepsilon^2}{\delta^2}+C_T(1+|x|^2)\delta^2.\end{align*}
Combining all the estimates above and taking $\delta=\varepsilon^{2/3}$, we get
\begin{align*}
\sup_{t\in [0,T]}\EE|\bar{X}_t^\varepsilon-\bar{X}_{t}|^{2} \leq& C_T(1+|x|^2)\varepsilon^{2/3}.
\end{align*}
Therefore, the desired assertion immediately follows from the inequality above and \eref{R1}.
\end{proof}

\begin{remark} Assume that Assumptions \ref{B1}, \ref{A2} and \ref{A5} hold with the rate function $\alpha$ being a positive constant. Then the functions $\Lambda$ and $\Lambda_{\gamma}$ are constants. Additionally, if $\sigma\equiv 0$, then we can improve the estimate \eref{F5.6} into
\begin{align*}
\sup_{0\leq s\leq t\leq T}\EE |\bar{X}_{t}-\bar{X}_{s} |\leq C_T(1+|x|+|y|)|t-s|.
\end{align*}
Following the proof of Theorem \ref{main result 5}, we can get
\begin{equation*}
\sup_{t\in [0, T]}\EE |X_t^{\varepsilon}-\bar{X}_t |^2\leq  C_{T}(1+|x|^{4}+|y|^{4})\varepsilon.
\end{equation*}
In contrast to \cite[Theorem 2.3]{W2013}, which claims that
$$\EE\left[\sup_{t\in [0, T]}|X_{t}^{\varepsilon}-\bar{X}_{t}|^2\right]=0,$$
here we can get the optimal strong convergence rate $1/2$ with the supremum being taken outside of the expectation.
\end{remark}

\subsection{Weak averaging principle: periodic coefficients case}

In this subsection, we suppose that $f$ and $g$ satisfy Assumptions {\rm\ref{B1}} and {\rm\ref{A5}}, and $b$ and $\sigma$ satisfy Assumption {\rm\ref{A3}}. We consider the averaged SDE
\begin{equation}
d\bar{X}_{t}=\bar{b}_p(\bar{X}_t)\,dt+\bar{\sigma}_p(\bar{X}_t)\,d \bar W_t,\quad\bar{X}_{0}=x, \label{AVE32}
\end{equation}
where $\bar{b}_p(x)$ is defined in \eref{barb31}, $$\bar{\sigma}_p(x):=\left[\left(\overline{\sigma\sigma^{\ast}}\right)_p(x)\right]^{1/2}:=\left[\frac{1}{\tau}\int^{\tau}_0\int_{\RR^{m}}\left(\sigma\sigma^{\ast}\right)(x,y)\,\mu^x_t(dy)\,dt\right]^{1/2},$$
and $\{\bar{W_t}\}_{t\ge0}$ is a $n$-dimensional Brownian motion.

Similar to \eqref{FourthDbarsigmA1}, now it holds that
\begin{align}
\sup_{t\in\RR,x\in\RR^n}\sum^4_{i=1}[\|\partial^i_x \bar{b}_p(x)\|+\|\partial^{i}_x\bar{\sigma}_p(x)\|]\leq C.\label{FourthDbarsigma3}
\end{align}
Hence, the SDE \eref{AVE32} admits a unique solution $\{\bar X^{x}_t\}_{t\geq 0}$.

For any $\varphi\in C^{4}_b(\RR^{n})$, consider the following Kolmogorov equation:
\begin{equation}
\label{KE3}
			\displaystyle
			\partial_t u(t,x)=\bar{\mathscr{L}}_p u(t,x),\quad
			u(0, x)=\varphi(x),
	\end{equation}
where  $\bar{\mathscr{L}}_p$ is the generator of the SDE \eref{AVE32} and is given by
$$
\bar{\mathscr{L}}_p\varphi(x):= \langle \bar{b}_{p}(x), \nabla \varphi(x) \rangle+\frac12\text{Tr} [\left(\overline{\sigma\sigma^{\ast}}\right)_{p}(x)\nabla^2\varphi(x)].
$$
We can verify that the equation \eref{KE3} has a unique solution which is given by $u(t,x)=\EE\varphi(\bar{X}^{x}_t)$ for all $t\ge 0,$ and that, for all $T>0$,
\begin{align}\label{F5.14}\begin{split}
&\sup_{0\leq t\leq T, x\in\RR^n}\sum^{4}_{i=1}\|\partial^i_{x}u(t,x)\|\leq C_{T},\\
&\sup_{0\leq t\leq T}\sum^{2}_{i=1}\|\partial_t(\partial^i_x u(t,x))\|\leq C_T(1+|x|).
\end{split}
\end{align}

Now, we state the last main result in this paper.

\begin{theorem} \label{main result 6}
Suppose that Assumptions {\rm\ref{B1}}, {\rm\ref{A3}} and {\rm\ref{A5}} hold. Let $\{(X_t^\varepsilon, Y_t^\varepsilon) \}_{t\ge0}$ and $\{\bar X_t\}_{t\ge0}$ be the solutions to the SDEs \eqref{Equation} and \eqref{AVE32} respectively. Then, for $T>0$ and $\varphi\in C^4_b(\RR^n)$, there is a constant $C_{\varphi,T}>0$ such that for any $(x,y)\in\RR^n\times\RR^m$ and $\varepsilon>0$,
\begin{align}
\sup_{0\leq t\leq T}|\EE \varphi(X^{\varepsilon}_t)-\EE \varphi(\bar{X}_t)|\leq& C_{\varphi,T}(1+|x|^2+|y|^2)\varepsilon\Big[\sup_{t\in [0,T]}\Lambda_\gamma(t/\varepsilon)+\varepsilon^{-2/3}\Big].\label{AVWO3}
\end{align}
\end{theorem}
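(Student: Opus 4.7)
The plan is to reduce the statement to a combination of Theorem~\ref{main result 2} (which compares $X^{\varepsilon}$ against the solution $\bar X^{\varepsilon}$ of the $\varepsilon$-dependent averaged SDE \eqref{AVE3}) and a second step that compares $\bar X^{\varepsilon}$ against $\bar X$, where $\bar X$ solves \eqref{AVE32}. For the first step, since Assumptions \ref{B1} and \ref{A3} hold, Theorem~\ref{main result 2} yields directly
\begin{equation*}
\sup_{0\leq t\leq T}\bigl|\EE\varphi(X^{\varepsilon}_t)-\EE\varphi(\bar X^{\varepsilon}_t)\bigr|\leq C_{\varphi,T}(1+|x|^2+|y|^2)\,\varepsilon\sup_{t\in[0,T]}\Lambda_{\gamma}(t/\varepsilon),
\end{equation*}
which accounts for the first term on the right-hand side of \eqref{AVWO3}.

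For the second step, fix $t\in(0,T]$ and set $\tilde u^{t}(s,x):=u(t-s,x)$, where $u$ solves the Kolmogorov equation \eqref{KE3}. By It\^o's formula applied to $\tilde u^{t}(s,\bar X^{\varepsilon}_s)$ and the identity $\partial_s\tilde u^{t}=-\bar{\mathscr L}_p\tilde u^{t}$, together with $\tilde u^{t}(t,\bar X^{\varepsilon}_t)=\varphi(\bar X^{\varepsilon}_t)$ and $\tilde u^{t}(0,x)=\EE\varphi(\bar X_t)$, I obtain
\begin{equation*}
\EE\varphi(\bar X^{\varepsilon}_t)-\EE\varphi(\bar X_t)=\EE\int_0^{t}\bigl(\bar{\mathscr L}^{\varepsilon}_s-\bar{\mathscr L}_p\bigr)\tilde u^{t}(s,\cdot)(\bar X^{\varepsilon}_s)\,ds,
\end{equation*}
whose integrand is a sum of $\langle\bar b(s/\varepsilon,\bar X^{\varepsilon}_s)-\bar b_p(\bar X^{\varepsilon}_s),\partial_x\tilde u^{t}(s,\bar X^{\varepsilon}_s)\rangle$ and the corresponding second-order term built from $\overline{\sigma\sigma^{\ast}}(s/\varepsilon,\cdot)-(\overline{\sigma\sigma^{\ast}})_p$.

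To exploit the periodic structure I introduce a time discretization $s(\delta):=[s/\delta]\delta$ and split each integrand as a sum of three pieces: (a) the result of freezing $\bar X^{\varepsilon}_s\mapsto\bar X^{\varepsilon}_{s(\delta)}$ in the first argument (drift/diffusion difference), (b) the result of freezing $s\mapsto s(\delta)$ in the time-argument of $\partial_x\tilde u^{t}$ and $\partial^2_x\tilde u^{t}$, and (c) the remaining frozen expression. The error in (a) is controlled by the Lipschitz continuity of $\bar b(s/\varepsilon,\cdot)$, $\bar b_p$, $\overline{\sigma\sigma^{\ast}}(s/\varepsilon,\cdot)$ and $(\overline{\sigma\sigma^{\ast}})_p$ from \eqref{FourthDbarsigma3} together with the $L^{2}$-temporal Lipschitz bound $\EE|\bar X^{\varepsilon}_s-\bar X^{\varepsilon}_{s(\delta)}|^{2}\leq C\delta$, yielding an $O(\sqrt{\delta})$ contribution after integration. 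The error in (b) is bounded by $\sup_{s,x}\|\partial_s\partial_x^{i}u\|\cdot(1+|\bar X^{\varepsilon}_{s(\delta)}|)\cdot\delta$ via \eqref{F5.14}, giving $O(\delta)$. For (c), since $\bar b(\cdot,x)-\bar b_p(x)$ is $\tau$-periodic with vanishing mean over $[0,\tau]$ (by definition of $\bar b_p$), and similarly for the diffusion, Lemma~\ref{P5.1} applied on each block $[k\delta,(k+1)\delta]$ gives
\begin{equation*}
\Big|\int_{k\delta}^{(k+1)\delta}\bigl[\bar b(s/\varepsilon,\bar X^{\varepsilon}_{k\delta})-\bar b_p(\bar X^{\varepsilon}_{k\delta})\bigr]\,ds\Big|\leq C\varepsilon(1+|\bar X^{\varepsilon}_{k\delta}|),
\end{equation*}
and likewise for the diffusion term; summing over $[T/\delta]$ blocks contributes $O(\varepsilon/\delta)$.

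Combining (a)--(c) gives $|\EE\varphi(\bar X^{\varepsilon}_t)-\EE\varphi(\bar X_t)|\leq C_{\varphi,T}(1+|x|^{2}+|y|^{2})(\sqrt{\delta}+\delta+\varepsilon/\delta)$, and optimising in $\delta$ via $\delta=\varepsilon^{2/3}$ yields the bound $C_{\varphi,T}(1+|x|^{2}+|y|^{2})\varepsilon^{1/3}=C_{\varphi,T}(1+|x|^{2}+|y|^{2})\varepsilon\cdot\varepsilon^{-2/3}$. Adding the first step's estimate completes the proof. The main obstacle is the careful bookkeeping in step (c): one must ensure that the freezing-error bounds from (a) and (b) only use the regularity already established in \eqref{FourthDbarsigma3} and \eqref{F5.14}, and that the linear-growth factor $(1+|\bar X^{\varepsilon}_{k\delta}|)$ appearing in (b) and (c) is integrable uniformly in $\varepsilon$ via Lemma~\ref{PMA} (applied with $\bar\sigma$ in place of $\sigma$), so that the final constant depends only on $|x|$, $|y|$ and $T$.
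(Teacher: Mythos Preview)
Your approach is essentially the same as the paper's: invoke Theorem~\ref{main result 2} for the distance between $X^{\varepsilon}$ and $\bar X^{\varepsilon}$, then use the Kolmogorov representation via $\tilde u^{t}(s,x)=u(t-s,x)$, a time discretisation $s(\delta)=[s/\delta]\delta$, and Lemma~\ref{P5.1} on each block, finally optimising $\delta=\varepsilon^{2/3}$. The paper groups the freezing errors as $\tilde J_1,\tilde J_2$ (simultaneous freezing of both the spatial and the time arguments) and the frozen block sums as $\tilde J_3,\tilde J_4$; your (a)--(c) is just a different bookkeeping of the same pieces.

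There is, however, one small gap in your decomposition as written. In (a) you freeze $\bar X^{\varepsilon}_s\mapsto\bar X^{\varepsilon}_{s(\delta)}$ \emph{only} in the coefficient difference $\bar b(s/\varepsilon,\cdot)-\bar b_p(\cdot)$, and in (b) you freeze only the time variable of $\partial_x\tilde u^{t}$. After these two steps the remaining integrand in (c) is
\[
\bigl\langle \bar b(s/\varepsilon,\bar X^{\varepsilon}_{s(\delta)})-\bar b_p(\bar X^{\varepsilon}_{s(\delta)}),\ \partial_x\tilde u^{t}(s(\delta),\bar X^{\varepsilon}_{s})\bigr\rangle,
\]
in which the factor $\partial_x\tilde u^{t}(s(\delta),\bar X^{\varepsilon}_{s})$ still varies with $s$ through $\bar X^{\varepsilon}_s$. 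You therefore cannot pull it outside the block integral and invoke Lemma~\ref{P5.1} on the coefficient alone, as your displayed inequality for (c) implicitly does. The fix is easy: in (a) also freeze $\bar X^{\varepsilon}_s\mapsto\bar X^{\varepsilon}_{s(\delta)}$ in the spatial argument of $\partial_x\tilde u^{t}$ (and of $\partial_x^{2}\tilde u^{t}$). The extra error this generates is controlled by the Lipschitz continuity of $\partial_x\tilde u^{t}$ and $\partial_x^{2}\tilde u^{t}$ in $x$, which follows from the boundedness of $\partial_x^{2}u$ and $\partial_x^{3}u$ in \eqref{F5.14}, together with the same $L^{2}$-increment bound $\EE|\bar X^{\varepsilon}_s-\bar X^{\varepsilon}_{s(\delta)}|^{2}\le C_T(1+|x|^{2})\delta$; this again contributes $O(\sqrt{\delta})$ and does not change the optimised rate. (This is exactly what the paper does when it freezes both arguments at once in $\tilde J_1$, $\tilde J_2$.) A related minor point: your bound in (b) should read $C_T(1+|\bar X^{\varepsilon}_s|)\delta$ times the linear growth of the coefficient factor, since by \eqref{F5.14} the time-derivative $\partial_s\partial_x^{i}u$ has linear growth in $x$ rather than being uniformly bounded; moment control then comes from the analogue of Lemma~\ref{PMA} for \eqref{AVE3}, which is immediate from \eqref{FourthDbarb} and the boundedness of $\bar\sigma$.
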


\begin{proof} Let $\{\bar X_t^\varepsilon\}_{t\ge0}$ be the solution to the SDE \eqref{AVE3}.
Fix $t\le T$, and denote by $\tilde{u}^{t}(s,x)=u(t-s,x)$ for all $s\le t$. Following the proof of Theorem \ref{main result 4}, we find that
\begin{equation*}
\begin{split}
&\sup_{t\in [0,T]}|\EE\varphi(\bar X^{\varepsilon}_{t})-\EE\varphi(\bar{X}_{t})|\\
&\leq \sup_{t\in [0,T]}\Big|\EE  \int^t_0   \langle \bar b(s/\varepsilon,\bar X^{\varepsilon}_s)  -  \bar{b}_p(\bar X^{\varepsilon}_s),\partial_x \tilde{u}^{t}(s,\bar X^{\varepsilon}_s )\rangle\,ds\Big| \\
		&\quad +\frac{1}{2}\sup_{t\in [0,T]}\Big|\EE  \int^t_0   \text{Tr}\big[((\overline{\sigma\sigma^{\ast}})(s/\varepsilon,\bar X^{\varepsilon}_s)  -  (\overline{\sigma\sigma^{\ast}})_p(\bar X^{\varepsilon}_s))\partial_x^{2}\tilde{u}^{t}(s, \bar X^{\varepsilon}_s)\big]  \,ds\Big|\\
&\leq \sup_{t\in [0,T]}\Big|\EE  \int^t_0   \left[\langle\bar b(s/\varepsilon,\bar X^{\varepsilon}_{s})  -  \bar{b}_p(\bar X^{\varepsilon}_s),\partial_x \tilde{u}^{t}(s,\bar X^{\varepsilon}_s )\rangle\right.\\
&\quad\quad\quad\quad\quad\quad\quad-\left.\langle \bar b(s/\varepsilon,\bar X^{\varepsilon}_{s(\delta)})  -  \bar{b}_p(\bar X^{\varepsilon}_{s(\delta)}),\partial_x \tilde{u}^{t}({s(\delta)},\bar X^{\varepsilon}_{s(\delta)} )\rangle\right]\,ds\Big| \\
		&\quad+\frac{1}{2}\sup_{t\in [0,T]}\Big|\EE  \int^t_0   \left\{\text{Tr}\big[ ( (\overline{\sigma \sigma^{\ast}} )(s/\varepsilon,\bar X^{\varepsilon}_s)  -  (\overline{\sigma  \sigma ^{\ast}} )_p(\bar X^{\varepsilon}_s) )\partial_x^{2}\tilde{u}^{t}(s, \bar X^{\varepsilon}_s)\big]\right. \\
&\quad\quad\quad \quad\quad\quad\quad\quad\quad-\left.\text{Tr}\big[((\overline{\sigma\sigma^{\ast}})(s/\varepsilon,\bar X^{\varepsilon}_{s(\delta)})  -  (\overline{\sigma \sigma}^{\ast})_p(\bar X^{\varepsilon}_{s(\delta)}))\partial_x^{2}\tilde{u}^{t}(s(\delta), \bar X^{\varepsilon}_{s(\delta)})\big]\right\} \,ds\Big|\\
&\quad+\sup_{t\in [0,T]}\Big|\EE  \int^t_0    \langle \bar b(s/\varepsilon,\bar X^{\varepsilon}_{s(\delta)})  -  \bar{b}_p(\bar X^{\varepsilon}_{s(\delta)}),\partial_x \tilde{u}^{t}({s(\delta)},\bar X^{\varepsilon}_{s(\delta)} ) \rangle\,ds\Big|\\
&\quad+\frac{1}{2}\sup_{t\in [0,T]}\Big|\EE  \int^t_0   \text{Tr}\big[ ( (\overline{\sigma\sigma^{\ast}})(s/\varepsilon,\bar X^{\varepsilon}_{s(\delta)})  -  (\overline{\sigma\sigma^{\ast}})_p(\bar X^{\varepsilon}_{s(\delta)}) \partial_x^{2}\tilde{u}^{t}({s(\delta)}, \bar X^{\varepsilon}_{s(\delta)})\big]  \,ds\Big|=:\sum^4_{i=1}\tilde{J}_i(T).
	\end{split}\end{equation*}
By \eqref{FourthDbarb}, \eref{FourthDbarsigma3} and \eref{F5.14},
\begin{align*}
\sup_{0\leq s\leq t\leq T}\EE |\bar X^{\varepsilon}_t-\bar X^{\varepsilon}_s |^2\leq C_T(1+|x|^2)|t-s|,
\end{align*}
which combines with \eref{X} and \eref{F5.14} imply that
$$
\tilde{J}_1(T)+\tilde{J}_2(T)\leq C_T(1+|x|^2)\delta^{1/2}.$$

Using Proposition \ref{P5.1}, \eqref{F5.14} and \eqref{X}, we have
\begin{align*}
\tilde{J}_3(T)\leq& C_T\EE\left|\sum^{[T/\delta]-1}_{k=0}\int^{(k+1)\delta}_{k\delta}  \langle \bar{b}(s/\varepsilon,\bar{X}^{\varepsilon}_{k\delta})-\bar{b}_p(\bar{X}^{\varepsilon}_{k\delta}),\partial_x \tilde{u}^{t}(k\delta,\bar X^{\varepsilon}_{k\delta}) \rangle\,ds\right|\\
&\quad +C_T\EE\left|\int^{T}_{[T/\delta]\delta}  \langle \bar{b}(s/\varepsilon,\bar{X}^{\varepsilon}_{s(\delta)}) -\bar{b}_p(\bar{X}^{\varepsilon}_{s(\delta)}),\partial_x \tilde{u}^{t}(s(\delta),\bar X^{\varepsilon}_{s(\delta)}) )\rangle\,ds\right|\\
\leq& C_T\EE\left[\sum^{[T/\delta]-1}_{k=0}  \delta\left|\frac{\varepsilon}{\delta}\int^{\frac{k\delta}{\varepsilon}
+\frac{\delta}{\varepsilon}}_{\frac{k\delta}{\varepsilon}}    \bar{b}(s,\bar{X}^{\varepsilon}_{k\delta})\,ds
-  \bar{b}_p(\bar{X}^{\varepsilon}_{k\delta})\right||\partial_x \tilde{u}^{t}(k\delta,\bar X^{\varepsilon}_{k\delta})|\right]  +  C_T(1  +  |x|)\delta\\
\leq& C_T\varepsilon\EE\left|\sum^{[T/\delta]-1}_{k=0}(1+|\bar{X}^\varepsilon_{k\delta}|)\right|+C_T(1+|x|^2)\delta\\
\leq& C_T(1+|x|)\frac{\varepsilon}{\delta}+C_T(1+|x|^2)\delta.
\end{align*}
Similarly, it holds that
\begin{align}
\tilde{J}_4(T)
\leq C_T(1+|x|)\frac{\varepsilon}{\delta}+C_T(1+|x|^2)\delta.\label{tildeJ4}
\end{align}

Putting all the estimates together and taking $\delta=\varepsilon^{2/3}$ yield that
\begin{equation*}
\sup_{t\in [0,T]}\left|\EE\varphi(\bar X^{\varepsilon}_{t})-\EE\varphi(\bar{X}_{t})\right|\leq C_T(1+|x|^2)\varepsilon^{1/3}.
\end{equation*}
Therefore, the desired assertion follows from the inequality above and \eref{AVWO}.
\end{proof}

\section{Examples}
In this section, we give two concrete examples to illustrate main results of the paper. In particular, the first example demonstrates that the convergence rate given in Theorem \ref{main result 1} is optimal. For simplicity, we only consider one-dimensional setting.

\begin{example}\label{Example 1}
Let $\alpha: [0,\infty)\to (0,\infty)$ satisfy \eref{e:remarkadd}. Consider the following SDE
\begin{equation*}\left\{\begin{array}{l}
\displaystyle
dX^{\varepsilon}_t=Y^{\varepsilon}_t\, dt+dW^1_t,\quad X^{\varepsilon}_0=x\in\mathbb{R},  \\
\displaystyle dY^{\varepsilon}_t=-\varepsilon^{-1}\alpha(t/\varepsilon)Y^{\varepsilon}_t\,dt+[\varepsilon^{-1}\alpha(t/\varepsilon)]^{1/2}\,dW^2_t,\quad Y^{\varepsilon}_0=y\in\mathbb{R},
\end{array}\right.
\end{equation*} where $W^{1}:=\{W^1_t\}_{t\ge0}$ and $W^{2}:=\{W^2_t\}_{t\ge0}$ be two independent one-dimensional Brownian motions.
It is easy to see that the solution to the SDE above is given by
\begin{equation*}\left\{\begin{array}{l}
\displaystyle
X^{\varepsilon}_t=x+\int^t_0 Y^{\varepsilon}_s\, ds+W^1_t, \\
\displaystyle Y^{\varepsilon}_t=e^{-\varepsilon^{-1}\int^t_0 \alpha(r/\varepsilon)\,dr}y+\varepsilon^{-1/2}\int^t_0 e^{-\varepsilon^{-1}\int^t_s\alpha(v/\varepsilon)\,dv}\alpha^{1/2}(s/\varepsilon)\,dW^2_s.
\end{array}\right.
\end{equation*}

On the other hand, the associated frozen equation
\begin{equation}
dY_t=-\alpha(t)Y_t\,dt+\alpha^{1/2}(t)\,dW^2_t,\quad Y_s=y\label{Ex1Fro}
\end{equation}
has a unique solution
$$Y^{s,y}_t=e^{-\int^t_s \alpha(u)\,du}y+\int^{t}_s e^{-\int^t_r\alpha(u)\,du}\alpha^{1/2}(r)\,dW^2_r,$$
whose distribution is $N\Big(e^{-\int^t_s \alpha(u)\,du}\,y, \frac{1}{2}\big(1-e^{-2\int^t_s \alpha(r)\, dr}\big)\Big)$.
In particular, letting $s\to -\infty$, we get that
$$
\mu_t:=N\left(0, 1/2\right),\quad t\in \RR
$$
is an evolution family of measures for the SDE \eref{Ex1Fro}.
Hence,  the corresponding averaged equation \eqref{AVE} is given by
$$d\bar{X}_t=dW^1_t,\quad \bar{X}_0=x,$$ which is independent of $\varepsilon$.

As a result, we have
\begin{align*}
\EE  |X^{\varepsilon}_t-\bar{X}_t  |^2= &\EE\left|\int^t_0 Y^{\varepsilon}_s\,ds\right|^2=\EE\left|\int^{t}_0 Y^{0,y}_{s/\varepsilon}\,ds\right|^2\\
= &\varepsilon^2\EE\left|\int^{t/\varepsilon}_0 Y^{0,y}_{s}\,ds\right|^2= 2\varepsilon^2\int^{t/\varepsilon}_0 \int^{t/\varepsilon}_r \EE (F(s)F(r))\, ds \,dr,
\end{align*}
where $$F(s):=e^{-\int^s_0\alpha(u)\,du}y+ \int^{s}_0 e^{-\int^s_u\alpha(v)\,dv}\alpha^{1/2}(u)\,dW^2_u.$$
Furthermore, for $s\geq r$,
\begin{align*}
\EE (F(s)F(r))= &e^{-\int^s_0\alpha(u)\,du}e^{-\int^r_0\alpha(u)\,du}y^2  +  \int^r_0     e^{-\int^s_u\alpha(v)\,dv}e^{-\int^r_u\alpha(v)\,dv}\alpha(u)\,du\\
= &\Big(y^2- \frac{1}{2}\Big)e^{-\int^s_0\alpha(u)\,du}e^{-\int^r_0\alpha(u)\,du}+\frac{1}{2}e^{-\int^s_r\alpha(u)\,du}.
\end{align*}
Therefore,
\begin{align*}
\EE  |X^{\varepsilon}_t-\bar{X}_t |^2
= &(2y^2-1)\varepsilon^2\int^{t/\varepsilon}_0     \int^{t/\varepsilon}_r     e^{-\int^s_0\alpha(u)\,du}e^{-\int^r_0\alpha(u)\,du}\,ds \,dr  +  \varepsilon^2\int^{t/\varepsilon}_0  \int^{t/\varepsilon}_r     e^{-\int^s_r\alpha(u)\,du}\,ds \,dr\\
\asymp  &\varepsilon^2\int^{t/\varepsilon}_0 \Lambda(s)\,ds \asymp \varepsilon^2\int^{t/\varepsilon}_0 \alpha(s)\Lambda^2(s)\,ds,\quad \varepsilon\rightarrow 0,
\end{align*}
where $\Lambda(s)=\int^{+\infty}_s e^{-\int^t_s\alpha(u)\,du}\,dt$. Here, $f(\varepsilon)\asymp g(\varepsilon)$ means there are positive constants $c_1\le c_2$ such that $c_1\le f(\varepsilon)/g(\varepsilon)\le c_2$ for all $\varepsilon\in (0,1]$.

Take $\alpha(t)=c_0(1+t)^{\beta}$ with $c_0>0$ and $\beta\in (-1,\infty)$. According to the argument in Remark \ref{remark3.7}, we also have
\begin{equation*}
\sup_{t\in [0, T]}\EE |X_{t}^{\varepsilon}-\bar{X}^{\varepsilon}_{t}|\asymp \left\{\begin{array}{l}
\displaystyle \varepsilon^{{1+\beta}},\quad\quad\quad -1<\beta<1, \\
\displaystyle \varepsilon^2\log{1/\varepsilon},\quad \beta=1,\\
\displaystyle \varepsilon^2,\quad\quad\quad \quad\beta>1.
\end{array}\right.
\end{equation*}
\end{example}

\begin{example}\label{Example 2}
Let $\phi:[0,\infty)\to (0,\infty)$ be a bounded function. Consider the following SDE
\begin{equation*}\left\{\begin{array}{l}
\displaystyle
dX^{\varepsilon}_t=Y^{\varepsilon}_t \,dt+dW^1_t,\quad X^{\varepsilon}_0=x\in\mathbb{R}, \\
\displaystyle dY^{\varepsilon}_t=\varepsilon^{-1}\left[\phi(t/\varepsilon)-Y^{\varepsilon}_t\right]\,dt+\varepsilon^{1/2}\,dW^2_t,\quad Y^{\varepsilon}_0=y\in\mathbb{R},
\end{array}\right.
\end{equation*}where $W^{1}:=\{W^1_t\}_{t\ge0}$ and $W^{2}:=\{W^2_t\}_{t\ge0}$ are two independent one-dimensional Brownian motions. The SDE above
has the solution
\begin{equation*}\left\{\begin{array}{l}
\displaystyle
X^{\varepsilon}_t=x+\int^t_0 Y^{\varepsilon}_s\, ds+W^1_t, \\
\displaystyle Y^{\varepsilon}_t=e^{-\varepsilon^{-1}t}y+\varepsilon^{-1}\int^t_0e^{-\varepsilon^{-1}(t-r)}\phi(r/\varepsilon)\,dr+\varepsilon^{-1/2}\int^t_0 e^{-\varepsilon^{-1}(t-r)}\,dW^2_r.\\
\end{array}\right.
\end{equation*}

On the other hand, the associated frozen equation
\begin{equation}
dY_t=[\phi(t)-Y_t]\,dt+dW^2_t,\quad Y_s=y\label{Ex2Fro}
\end{equation}
admits a unique solution
$$Y^{s,y}_t=e^{-(t-s)}y+\int^t_se^{-(t-r)}\phi(r)\,dr+ \int^{t}_s e^{-(t-r)}\,dW^2_r,\quad t\ge s,$$
and the associated distribution is $$N\Big(e^{-(t-s)}y+\int^t_s e^{-(t-r)}\phi(r)\,dr, \frac{1}{2}(1-e^{-2(t-s)})\Big).$$
Letting $s\to -\infty$, we get
$$
\mu_t=N\left(\int^t_{-\infty}    e^{-(t-r)}\phi(r)\,dr, \frac{1}{2}\right),\quad t\in \RR
$$
is an evolution system of measures for the SDE \eref{Ex2Fro}.
Hence, the corresponding averaged equations \eqref{AVE} and \eqref{AVE3} are same,  and both are given by
\begin{align}
d\bar{X}^{\varepsilon}_t=\psi(t/\varepsilon)\,dt+dW^1_t,\quad \bar{X}^{\varepsilon}_0=x,\label{Ex2AVE1}
\end{align}
where $\psi(t):=\int^t_{-\infty}e^{-(t-r)}\phi(r)\,dr$.
One has
\begin{align*}
\EE |X^{\varepsilon}_t-\bar{X}^{\varepsilon}_t |^2= &\EE \left|\int^t_0 (Y^{\varepsilon}_s-\psi(s/\varepsilon))\, ds\right |^2= \EE \left|\int^{t}_0 (Y^{0,y}_{s/\varepsilon}-\psi(s/\varepsilon) )\,ds \right|^2\\
=& \varepsilon^2\EE\left|\int^{t/\varepsilon}_0 (Y^{0,y}_{s}-\psi(s)) \,ds\right|^2
= 2\varepsilon^2\int^{t/\varepsilon}_0     \int^{t/\varepsilon}_r \EE (\tilde F(s)\tilde F(r)) \,ds\, dr,
\end{align*}
where
$$\tilde F(s):=ye^{-s}-\int^0_{-\infty}h(u)e^{-(s-u)}\,du+ \int^{s}_0 e^{-(s-u)}\,dW^2_u.$$
Furthermore, for any $s\geq r$,
\begin{align*}
\EE \tilde F(s)\tilde F(r)= &\left[ye^{-s}  -  \int^0_{-\infty}    h(u)e^{-(s-u)}\,du\right]\left[y e^{-r}  -  \int^0_{-\infty}h(u)    e^{-(r-u)}\,du\right] +\int^r_0 e^{-(s-u)}e^{-(r-u)}\,du\\
= &\left((y-c)^2-\frac{1}{2}\right)e^{-(s+r)}+\frac{1}{2}e^{-s+r},
\end{align*}
where $c=\int^0_{-\infty}e^{r}\phi(r)\,dr$. Therefore, we obtain
\begin{align*}
\EE |X^{\varepsilon}_t-\bar{X}^{\varepsilon}_t |^2
= 2\varepsilon^2\int^{t/\varepsilon}_0     \int^{t/\varepsilon}_r    \left[\left((y-c)^2- \frac{1}{2}\right)e^{-(s+r)}  +  \frac{1}{2}e^{-s+r}\right]\,ds \,dr=O(\varepsilon).
\end{align*}

On the other hand,
\begin{align*}
|\EE X^{\varepsilon}_t-\EE\bar{X}^{\varepsilon}_t|= & \left|\int^t_0 \left(\EE Y^{\varepsilon}_s -\psi(s/\varepsilon)\right)\, ds\right |
=
\left|\int^t_0 (\EE Y^{0,y}_{s/\varepsilon} -\psi(s/\varepsilon))\, ds\right |\\
=&\varepsilon\Big|\int^{t/\varepsilon}_0     (\EE Y^{0,y}_{s}  -  \psi(s))\, ds\Big|
=\varepsilon\Big|\int^{t/\varepsilon}_0     \left(ye^{-s}  -  \int^{0}_{-\infty}e^{-(s-r)}    \phi(r)\,dr\right)\, ds\Big|= O(\varepsilon).
\end{align*}

We observe that both Assumptions \ref{B1} and \ref{A3} hold with $\alpha(t)\equiv 1$ and $\Lambda(t)\equiv 1$ in this case. Theorems \ref{main result 1} and \ref{main result 2} imply that
\begin{align*}
\sup_{t\in [0,T]}\EE |X^{\varepsilon}_t-\bar{X}^{\varepsilon}_t |^2+\sup_{t\in [0,T]}|\EE X^{\varepsilon}_t-\EE\bar{X}^{\varepsilon}_t|
\leq C_{T,x,y} \varepsilon,
\end{align*}
which coincides with the assertion above. Hence, the assertion above indicates the effectiveness of averaged equations \eqref{AVE} and \eqref{AVE3}.

\vspace{0.2cm}

In the spirit of Sections 4 and 5, we can expect the averaged SDE \eref{Ex2AVE1} to be independent of $\varepsilon$, if some additional assumptions imposed on $\phi$.

\textbf{\emph{Convergent coefficient case}}: Assume the following condition hold:
\begin{align*}
&\lim_{t\rightarrow +\infty}\phi(t)=0.
\end{align*}
In this situation, it is easy to check that both of the associated averaged equations \eqref{AVE21} and \eqref{AVE22} are given by
$$d\bar{X}_t=dW^1_t,\quad \bar{X}_0=x.$$
Thus, we can obtain
\begin{align*}
\EE |X^{\varepsilon}_t-\bar{X}_t |^2
= &\varepsilon^2\EE\Big|\int^{t/\varepsilon}_0 Y^{0,y}_{s}\,ds\Big|^2
= \varepsilon^2\EE\Big|\int^{t/\varepsilon}_0 (Y^{0,y}_{s}-\psi(s)+\psi(s)) \,ds\Big|^2\\
= &2\varepsilon^2\int^{t/\varepsilon}_0     \int^{t/\varepsilon}_r \EE \left[(\tilde F(s)+\psi(s))(\tilde F(r)+\psi(r))\right] \,ds \,dr\\
= &O(\varepsilon)+C\varepsilon^2\Big(\int^{t/\varepsilon}_0\psi(s)\,ds\Big)^2\rightarrow 0, \quad \text{as}\quad\varepsilon\rightarrow 0.
\end{align*}
On the other hand, it also holds that
\begin{align*}
|\EE X^{\varepsilon}_t-\EE \bar{X}_t|=& \left|\int^t_0 \EE Y^{\varepsilon}_s \, ds\right |= \varepsilon\Big|\int^{t/\varepsilon}_0 \EE Y^{0,y}_{s}\, ds\Big |\\
=~  & \varepsilon\left|\int^{t/\varepsilon}_0 (\EE Y^{0,y}_{s}-\psi(s)+\psi(s))\, ds\right |\\
=&\varepsilon\Big|\int^{t/\varepsilon}_0 \left(e^{-s}y+\int^{s}_{0}e^{-(s-r)}\phi(r)\,dr\right)\, ds\Big|\\
= &O(\varepsilon)+C\varepsilon\int^{t/\varepsilon}_0\psi(s)\,ds\rightarrow 0, \quad \text{as}\quad\varepsilon\rightarrow 0.
\end{align*}
Hence, the assertion above indicates the effectiveness of the averaged equations \eqref{AVE21} and \eqref{AVE22}.

\textbf{\emph{Periodic coefficient case}}: Assume $\phi$ is $\tau$-periodic, that is, $\phi(t+\tau)=\phi(t)$ for all $t\in\RR.$
In this situation, it is easy to check that the associated averaged equations \eqref{AVE31} and \eqref{AVE32} are given by
$$d\bar{X}_t=b\,dt+dW^1_t,\quad \bar{X}_0=x,$$
where $b=\frac{1}{\tau}\int^{\tau}_0\psi(t)\,dt$. Then, we obtain
\begin{align*}
\EE |X^{\varepsilon}_t  -  \bar{X}_t |^2
= &\varepsilon^2\EE\Big|\int^{t/\varepsilon}_0     (Y^{0,y}_{s}  -  b)\,ds\Big|^2
= \varepsilon^2\EE\Big|\int^{t/\varepsilon}_0 (Y^{0,y}_{s}  -  \psi(s)  +  \psi(s)  -  b) \,ds\Big|^2\\
= &O(\varepsilon)+C_T\Big|\frac{\varepsilon}{t}\int^{t/\varepsilon}_0\psi(s)\,ds-b\Big|^2\rightarrow 0, \quad \text{as}\quad\varepsilon\rightarrow 0,
\end{align*}
where we used the fact that $\psi$ is $\tau$-periodic.
Similarly, we have
\begin{align*}
|\EE X^{\varepsilon}_t-\EE \bar{X}_t|=&\Big|\int^t_0 (\EE Y^{\varepsilon}_s-b) \, ds\Big|= \varepsilon\Big|\int^{t/\varepsilon}_0  (\EE Y^{0,y}_{s}-b )\, ds\Big|\\
=& \varepsilon\Big|\int^{t/\varepsilon}_0 (\EE Y^{0,y}_{s}-\psi(s)+\psi(s)-b)\, ds\Big|\\
= &O(\varepsilon)+C\Big|\frac{\varepsilon}{t}\int^{t/\varepsilon}_0\psi(s)\,ds-b\Big|\rightarrow 0, \quad \text{as}\quad\varepsilon\rightarrow 0.
\end{align*}
Hence, the assertion above indicates the effectiveness of the averaged equations \eqref{AVE31} and \eqref{AVE32}.
\end{example}

\vspace{0.3cm}
\textbf{Acknowledgment}. The research of Xiaobin Sun is supported by the NSF of China (Nos.
12271219, 12090010 and 12090011).
The research of Jian Wang is supported by the National Key R\&D Program of China (2022YFA1006003) and
the NSF of China (Nos.\ 12071076 and 12225104). The research of Yingchao Xie is supported by the NSF of China  (Nos.\ 11931004 and 12471139) and the Priority Academic Program Development of Jiangsu Higher Education Institutions.


\begin{thebibliography}{2}

\bibitem{BSWX2024} J. Bao, X. Sun, J. Wang and Y. Xie: Quantitative estimates for L\'evy driven SDEs with different drifts and applications, {\it J. Differential Equations}, 398 (2024), 182--217.

\bibitem{BYY2017} J. Bao, G. Yin and C. Yuan: Two-time-scale stochastic partial differential equations driven by $\alpha$-stable noises: averaging principles, {\it Bernoulli}, 23 (2017),  645--669.

\bibitem{BR2017} R. Bertram and J.E. Rubin: Multi-timescale systems and fast-slow analysis, {\it Math. Biosci.},  287  (2017), 105--121.

\bibitem{BM1961} N.N. Bogoliubov and Y.A. Mitropolsky: \emph{Asymptotic Methods in the Theory of Non-linear Oscillations}, Gordon and Breach Science Publishers, New York, 1961.

\bibitem{B2012} C.E. Br\'{e}hier: Strong and weak orders in averaging for SPDEs, {\it Stochastic Process. Appl.}, 122 (2012), 2553--2593.

\bibitem{B2020} C.E. Br\'{e}hier: Orders of convergence in the averaging principle for SPDEs: the case of a stochastically forced slow component, {\it Stochastic Process. Appl.}, 130 (2020), 3325--3368.






\bibitem{CF2009} S. Cerrai and M. Freidlin: Averaging principle for stochastic reaction-diffusion equations, {\it Probab. Theory Related Fields}, 144 (2009), 137--177.

\bibitem{CL2017} S. Cerrai and A. Lunardi: Averaging principle for nonautonomous slow-fast systems of stochastic reaction-diffusion equations: the almost periodic case, {\it SIAM J. Math. Anal.}, 49 (2017), 2843--2884.

\bibitem{CLX} P. Chen, J. Lu and L. Xu: Approximation to stochastic variance reduced gradient Langevin dynamics by stochastic delay differentia equations, {\it Appl. Math. Optimization}, 85 (2022), no. 15.

\bibitem{CL2023} M. Cheng and Z. Liu: The second Bogolyubov theorem and global averaging principle for SPDEs with monotone coefficients, {\it SIAM J. Math. Anal.}, 55 (2023), 1100--1144.

\bibitem{CDGOS2022}  D. Crisan, P. Dobson, B. Goddard, M. Ottobre and I. Souttar: Poisson equations with locally-Lipschitz coefficients and uniform in time averaging for stochastic differential equations via strong exponential stability, to appear in {\it Ann. Inst. Henri Poincar\'e Probab. Stat.}, arXiv:2204.02679  

\bibitem{DR2006} G. Da Prato and M. R\"{o}ckner: Dissipative stochastic equations in Hilbert space with time dependent coefficients, {\it Atti Accad. Naz. Lincei Rend. Lincei Mat. Appl.}, 17 (2006), 397--403.

\bibitem{DR2008} G. Da Prato and M. R\"{o}ckner: A note on evolution systems of measures for time-dependent stochastic differential equations, in: \emph{Seminar on Stochastic Analysis, Random Fields and Applications V}, 2008, 115--122.

\bibitem{DT1995} G. Da Prato and C. Tudor: Periodic and almost periodic solutions for semilinear stochastic equations, \emph{Stoch. Anal. Appl.}, 13 (1995), 13--33.

\bibitem{DSXZ2018} Z. Dong, X. Sun, H. Xiao and J. Zhai: Averaging principle for one dimensional stochastic Burgers equation, {\it J. Differential Equations}, 265 (2018), 4749--4797.

\bibitem{EE2003}W. E and B. Engquist: Multiscale modeling and computation, {\it Notices Amer. Math. Soc.},  50 (2003),  1062--1070

\bibitem{ELV2005} W. E, D. Liu and E. Vanden-Eijnden: Analysis of multiscale methods for stochastic differential equations, {\it Comm. Pure Appl. Math.}, 58 (2005), 1544--1585.

\bibitem{FW2012} M. Freidlin and A. Wentzell: {\it Random Perturbations of Dynamical Systems}, third edition, Springer, Heidelberg, 2012.

\bibitem{FWL2015} H. Fu, L. Wan and J. Liu: Strong convergence in averaging principle for stochastic hyperbolic-parabolic equations with two time-scales, {\it Stochastic Process. Appl.}, 125  (2015), 3255--3279.

\bibitem{GW2012} M. Galtier and G. Wainrib: Multiscale analysis of slow-fast neuronal learning models with noise, {\it J. Math. Neurosci.}, 2 (2012), 1--64.

\bibitem{G2018}  P. Gao: Averaging principle for stochastic Kuramoto-Sivashinsky equation with a fast oscillation,  {\it Discrete Contin. Dyn. Syst. A}, 38 (2018), 5649--5684.

\bibitem{GKK2006} D. Givon, I. G. Kevrekidis and R. Kupferman: Strong convergence of projective integeration schemes for singularly perturbed stochastic differential systems, {\it Comm. Math. Sci.}, 4 (2006), 707--729.

\bibitem{GO2013} M. Gradinaru and Y. Offret: Existence and asymptotic behaviour of some time-inhomogeneous diffusions, {\it Ann. Inst. H. Poincar\'e Probab. Statist.}, 49 (2013), 182--207.

\bibitem{HL2020} M. Hairer and X.-M. Li, Averaging dynamics driven by fractional Brownian motion, {\it Ann. Probab.}, 48 (2020), 1826--1860.

\bibitem{HLLS2023} W. Hong, S. Li, W. Liu and X. Sun: Central limit type theorem and large deviations principle for multi-scale McKean-Vlasov SDEs, {\it Probab. Theory Related Fields}, 187  (2023), 133--201.

\bibitem{K1968} R.Z. Khasminskii: On an averaging principle for It\^{o} stochastic differential equations, {\it Kibernetica},  4 (1968), 260--279.

\bibitem{K2012} R.Z. Khasminskii: {\it Stochastic Stability of Differential Equations}, second edition, Springer-Verlag, Berlin Heidelberg, 2012.

\bibitem{L2010} D. Liu: Strong convergence of principle of averaging for multiscale stochastic dynamical systems, {\it Commun. Math. Sci.}, 8 (2010), 999--1020.


\bibitem{LRSX2020} W. Liu, M. R\"{o}ckner, X. Sun and Y. Xie: Averaging principle for slow-fast stochastic differential equations with time dependent locally Lipschitz coefficients, {\it J. Differential Equations}, 268 (2020),  2910--2948.

\bibitem{PV2001} E. Pardoux and A.Y. Veretennikov: On the Poisson equation and diffusion approximation. I, {\it Ann. Probab.}, 29 (2001), 1061--1085.

\bibitem{PV2003} E. Pardoux and A.Y. Veretennikov: On the Poisson equation and diffusion approximation. 2, {\it Ann. Probab.}, 31  (2003), 1166--1192.

\bibitem{PV2005} E. Pardoux and A.Y. Veretennikov: On the Poisson equation and diffusion approximation. 3, {\it Ann. Probab.}, 33  (2005), 1111--1133.

\bibitem{PS2008} G.A. Pavliotis and A.M. Stuart: {\it Multiscale Methods: Averaging and Homogenization},  Springer, New York, 2008.

\bibitem{PXW2017} B. Pei, Y. Xu and J.L. Wu:  Two-time-scales hyperbolic-parabolic equations driven by Poisson random measures: existence, uniqueness and averaging principles, {\it J. Math. Anal. Appl.}, 447  (2017), 243--268.



\bibitem{RSX2021} M. R\"{o}ckner, X. Sun and Y. Xie: Strong convergence order for slow-fast McKean-Vlasov stochastic differential equations,  {\it Ann. Inst. Henri Poincar\'e Probab. Stat.}, 57 (2021), 547--576.

\bibitem{RX2021} M. R\"{o}ckner and L. Xie: Diffusion approximation for fully coupled stochastic differential equations, {\it Ann. Probab.}, 49  (2021), 1205--1236.

\bibitem{SXW2022} G. Shen, J. Xiang and J. Wu:  Averaging principle for distribution dependent stochastic differential equations driven by fractional Brownian motion and standard Brownian motion, {\it J. Differential Equations}, 321 (2022), 381--414.

\bibitem {SX2023} X. Sun and Y. Xie: Orders of strong and weak averaging principle for multi-scale SPDEs driven by $\alpha$-stable process, {\it J. Differential Equations}, 351 (2023),  194--242.

\bibitem {SXX2022}  X. Sun, L. Xie and Y. Xie: Strong and weak convergence rates for slow-fast stochastic differential equations driven by $\alpha$-stable process, {\it Bernoulli}, 28 (2022), 343--369.

\bibitem{U2021} K. Uda: Averaging principle for stochastic differential equations in the random periodic regime, {\it Stochastic Process. Appl.}, 139 (2021), 1--36.

\bibitem{V1991} A.Y. Veretennikov: On the averaging principle for systems of stochastic differential equations, {\it Math. USSR Sborn.}, 69 (1991), 271--284.

\bibitem{W2013} G. Wainrib: Double averaging principle for periodically forced slow-fast stochastic systems, {\it Electron. Commun. Probab.}, 18 (2013), 1--12.

\bibitem{WR2012} W. Wang and A.J. Roberts: Average and deviation for slow-fast stochastic partial differential equations, {\it J. Differential Equations}, 253 (2012), 1265--1286.


\bibitem{WTRY2016} F. Wu, T. Tian, J.B. Rawlings and G. Yin: Approximate method for stochastic chemical kinetics with two-time scales by chemical Langevin equations, {\it J. Chem. Phys.}, 144 (2016), no. 174112.

\bibitem{XML2015} J. Xu, Y. Miao and J. Liu: Strong averaging principle for slow-fast SPDEs with Poisson random measures, {\it Discrete Contin. Dyn. Syst. Ser. B}, 20 (2015),   2233--2256.
\end{thebibliography}
\end{document}